\newtheorem{theorem}{Theorem}[section]
\newtheorem{proposition}[theorem]{Proposition}
\newtheorem{definition}[theorem]{Definition}
\newtheorem{lemma}[theorem]{Lemma}
\newtheorem{corollary}[theorem]{Corollary}
\newtheorem{conjecture}[theorem]{Conjecture}
\theoremstyle{remark}
\newtheorem{remark}[theorem]{Remark}
\numberwithin{equation}{section}
\DeclareFontFamily{OT2}{cmr}{\hyphenchar\font45 }
\DeclareFontShape{OT2}{cmr}{m}{n}{<5><6><7><8><9>gen*wncyr<10><10.95><12><14.4><17.28><20.74><24.88>wncyr10}{}
\DeclareFontShape{OT2}{cmr}{b}{n}{<5><6><7><8><9>gen*wncyb<10><10.95><12><14.4><17.28><20.74><24.88>wncyb10}{}
\DeclareMathAlphabet{\mathcyr}{OT2}{cmr}{m}{n}
\DeclareMathAlphabet{\mathcyb}{OT2}{cmr}{b}{n}
\SetMathAlphabet{\mathcyr}{bold}{OT2}{cmr}{b}{n}
\newcommand{\R}{\mathbb R}
\newcommand{\Z}{{\mathbb Z}}
\newcommand{\C}{{\mathbb C}}
\newcommand{\Q}{{\mathbb Q}}
\newcommand{\SL}{\rm SL}
\newcommand{\odd}{\mathbb O}
\newcommand{\even}{\mathbb E}
\newcommand{\cusp}{\mathbb S}
\newcommand{\Zsp}{\mathcal{H}}
\newcommand{\Asp}{\mathcal{A}}
\newcommand{\depth}{{\mathfrak D}}
\newcommand{\m}{{\mathfrak m}}
\newcommand{\rank}{{\rm rank}\,}
\newcommand{\V}{{\mathbf{V}}}
\title[Multiple zeta values and period polynomials]{Relationships between multiple zeta values of depths 2 and 3 and period polynomials}
\author[D.~Ma, K.~Tasaka]{Ding Ma, Koji Tasaka}
\keywords{Multiple zeta values, Period polynomials}
\subjclass[2010]{Primary~11F32, Secondary~11F67}
\address[Ding Ma]{Department of Mathematics
Duke university, US}
\email{martin@math.duke.edu}
\address[Koji Tasaka]{Department of Information Science and Technology, Aichi Prefectural University}
\email{tasaka@ist.aichi-pu.ac.jp}
\date{}
\begin{document}

\maketitle

\begin{abstract}
Some combinatorial aspects of relations between multiple zeta values of depths 2 and 3 and period polynomials are discussed. 
\end{abstract}


\section{Introduction}

The multiple zeta value is defined by 
\begin{equation*}
\zeta(k_1,\ldots,k_r) = \sum_{0<m_1<\cdots<m_r} \frac{1}{m_1^{k_1}\cdots m_r^{k_r}},
\end{equation*}
for integers $k_1,\ldots,k_{r-1}\ge1$ and $k_r\ge2$. 
We call $k_1+\cdots+k_r$ the weight and $r$ the depth.
Multiple zeta values have various connections with modular forms on $\SL_2(\Z)$ (or their period polynomials), which can be found in many articles \cite{BS,GKZ,KT,M1,M2,M3,M4,Ma,Z,Z1,Z3} for depth 2 and \cite{BK,B,B2,CGS,EL,G2,T} for depth $>2$.
In the present paper, we examine explicit connections between multiple zeta values of depths 2 and 3 and period polynomials.

Our results on depth 2 and on depth 3 lie in a slightly different context. 
For depth 2 case, motivated by the work of Gangl, Kaneko and Zagier \cite{GKZ}, we give a direct connection between even period polynomials of cusp forms and linear relations among $\zeta(odd,odd)$'s (see Theorem \ref{main1}).
A similar connection for the case $\zeta(odd,even)$, which was developed by the first author in \cite{M1,M2}, will be also described in our setting (see Theorem \ref{thm:Ding-odd}).
In depth 3, we give some indirect connections between period polynomials and linear relations among almost totally odd triple zeta values $\zeta(odd,odd,even)$, $\zeta(odd,even,odd)$ and $\zeta(even,odd,odd)$ (see Theorems \ref{thm:rest_even_C^j}, \ref{thm:B^3} and \ref{thm:B^3+E^3}).
These results can be viewed as generalizations of results given for the case $\zeta(odd,odd)$ by Baumard and Schneps \cite{BS} and for the case $\zeta(odd,even)$ by Zagier \cite{Z3}, and lead to upper bounds of the dimension of the $\Q$-vector space spanned by almost totally odd triple zeta values.
An explicit formula for the parity theorem (Theorem \ref{9_1}) is also derived from our discussions. 
Our proofs are based on the theory of motivic multiple zeta values developed by Brown (see e.g. \cite{B1,B3}), which will be summarized in Section 2.
In particular, Brown's method \cite[\S3]{B1} for proving linear relations among motivic multiple zeta values modulo single motivic zeta values will play a crucial role.

\subsection*{Acknowledgement}
The authors are grateful to Francis Brown for initial advice and useful comments.
The second author wishes to express his thanks to Herbert Gangl for drawing his attention to Yamamoto's $\frac{1}{2}$-interpolated multiple zeta values .
The second author also thanks Max Planck Institute for Mathematics, where the paper was written, for the invitation and hospitality.
This work is partially supported by Japan Society for the Promotion of Science, Grant-in-Aid for JSPS Fellows (No. 16H07115).

\section{Preliminaries}

\subsection{Motivic multiple zeta values}

We follow the notation of \cite{B1}.
The definition of the motivic multiple zeta value $\zeta^{\m}(n_1,\ldots,n_r)$ we use is referred to \cite[Definition 2.1]{B1}, where $\zeta^\m(2)$ is not treated to be zero.
A more elaborate definition can be found in \cite[\S2.2]{B3}, where the motivic multiple zeta value is defined as a motivic period of the Tannakian category of the mixed Tate motives over $\Z$.

Let $\mathcal{H}$ be the $\Q$-vector space spanned by all motivic multiple zeta values.
As usual, we call $n_1+\cdots+n_r$ the weight and $r$ the depth for $\zeta^{\m}(n_1,\ldots,n_r)$.
We regard $1\in \Q$ as the unique motivic multiple zeta value of weight 0 and depth 0.
Let $\Zsp_N$ denote the $\Q$-vector space spanned by all motivic multiple zeta values of weight $N$.
The space $\mathcal{H}$ naturally has the structure of a graded $\Q$-algebra
\[ \mathcal{H} = \bigoplus_{N\ge0} \mathcal{H}_N\]
with the shuffle product $\shuffle$.
There is the period map (see \cite[Eq.~(2.11)]{B1})
\begin{equation}\label{eq:per} 
{\rm per}: \mathcal{H} \longrightarrow \R
\end{equation}
that send $\zeta^\m(n_1,\ldots,n_r)$ to $\zeta^\shuffle (n_1,\ldots,n_r)$ the shuffle regularized multiple zeta value (see e.g. \cite[\S2]{IKZ} for the definition of $\zeta^\shuffle $).
We note that $\zeta^\m(1)=0$.

Let $\mathcal{A}$ be the quotient algebra $\mathcal{H}/\zeta^m(2)\mathcal{H}$.
From \cite[Theorem 2.4]{B1}, we see that the space $\mathcal{H}$ forms a graded $\mathcal{A}$-comodule over $\Q$ with the coaction $\Delta : \mathcal{H}\rightarrow \mathcal{A}\otimes \mathcal{H}$.
The coaction $\Delta$ is computed from Goncharov's coproduct formula for motivic iterated integrals with the factors interchanged (referred to \cite[Eq.~(2.18)]{B1}, and \cite[Eq.~(27)]{G} for the original one). 

As a partial answer to the Hoffman conjecture \cite[Conjecture C]{H}, it was shown by Brown \cite[Theorem 1.1]{B1} that the set $\{\zeta^\m(n_1,\ldots,n_r) \mid r\ge0, n_1,\ldots,n_r\in\{2,3\}\}$ forms a basis of the $\Q$-vector space $\mathcal{H}$.
There are several important consequences of Brown's theorem \cite[Theorem 1.1]{B1}, of which we use the following version.
\begin{theorem}\label{thm:phi}
There is a non-canonical isomorphism 
\[ \phi :  \Zsp\longrightarrow \mathcal{U}:=\Q\langle f_{2i+1}\mid i\ge1 \rangle\otimes_\Q \Q[f_2]\]
as algebra-comodules, sending $\zeta^\m(k)$ to $f_k$ for $k\ge2$, where we put
\[ f_{2k}:=\frac{\zeta^\m(2k)}{\zeta^\m(2)^k}f_2^k \ (k\ge1).\]
\end{theorem}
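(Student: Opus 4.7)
The plan is to leverage Brown's structure theorem for $\mathcal{A}=\mathcal{H}/\zeta^\m(2)\mathcal{H}$ and then lift to $\mathcal{H}$. The starting observation is that, as a formal consequence of Brown's Theorem 1.1, the graded Hopf algebra $\mathcal{A}$ is (non-canonically) isomorphic to the cofree graded commutative Hopf algebra on one generator in each odd weight $\geq 3$, namely $\mathcal{U}':=\Q\langle f_{2i+1}\mid i\geq 1\rangle$ equipped with the shuffle product and deconcatenation coproduct. Concretely, the motivic coaction endows $\mathcal{A}$ with a graded connected Hopf algebra structure whose Lie coalgebra of indecomposables has exactly one generator in each odd weight $\geq 3$; such a Hopf algebra is determined up to non-canonical isomorphism by this data (Milnor--Moore, in the dual).

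With that in hand, my plan proceeds in three steps. First, for each odd $k=2i+1\geq 3$, I would select the class of $\zeta^\m(k)$ in $\mathcal{A}$ as the image of $f_k$ and extend multiplicatively via the universal property to obtain an isomorphism $\phi_\mathcal{A}\colon \mathcal{A}\xrightarrow{\sim}\mathcal{U}'$ of algebra-comodules. Second, I would exploit that $\zeta^\m(2)$ is Galois-invariant, i.e.\ $\Delta\zeta^\m(2)=1\otimes \zeta^\m(2)$, so that $\Q[\zeta^\m(2)]\subset\mathcal{H}$ is a sub-comodule; combined with the fact that $\mathcal{H}$ is free over $\Q[\zeta^\m(2)]$ with quotient $\mathcal{A}$ (a direct consequence of Brown's Hoffman-type basis, since one may choose a basis respecting the $\zeta^\m(2)$-adic filtration), this yields a splitting $\mathcal{H}\cong\mathcal{A}\otimes_\Q\Q[\zeta^\m(2)]$ as $\mathcal{A}$-comodules. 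Sending $\zeta^\m(2)\mapsto f_2$ and tensoring with $\phi_\mathcal{A}$ furnishes the desired $\phi$. Third, I would verify the image on single zetas: by construction $\phi(\zeta^\m(k))=f_k$ for odd $k\geq 3$, while for even $k=2\ell$ the motivic Euler formula $\zeta^\m(2\ell)=q_\ell\,\zeta^\m(2)^\ell$ with $q_\ell\in\Q$ (verifiable motivically, e.g.\ by noting that $\zeta^\m(2\ell)$ is decomposable in $\mathcal{A}$ and comparing with the classical Euler identity via the period map) gives $\phi(\zeta^\m(2\ell))=q_\ell f_2^\ell = f_{2\ell}$ by the stated definition of $f_{2\ell}$.

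The main obstacle is the very first structural identification, that $\mathcal{A}$ is cofree with one generator in each odd weight $\geq 3$; this is essentially the substance of \cite[Theorem 1.1]{B1} combined with the motivic Galois upper bound, and is legitimate to cite rather than reprove. Granting it, all remaining steps are formal: picking lifts, extending by universal properties, and a short Padovan-style dimension check ($d_N=d_{N-2}+d_{N-3}$ on both sides) to confirm that $\phi$ is bijective weight by weight.
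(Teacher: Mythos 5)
Your proposal is correct and follows essentially the same route as the paper, which offers no independent proof but simply defines $\phi$ as the composition of the comodule splitting $\Zsp\cong\mathcal{A}\otimes_\Q\Q[\zeta^\m(2)]$ (Eq.~(2.15) of \cite{B1}) with the non-canonical identification $\mathcal{A}\cong\Q\langle f_{2i+1}\mid i\ge1\rangle$ (Eq.~(2.22) of \cite{B1}); your three steps are exactly these two maps plus the normalization $f_{2k}=\zeta^\m(2k)\zeta^\m(2)^{-k}f_2^k$ on even single zetas. The only caution is that the cofreeness of $\mathcal{A}$ is not a formal Milnor--Moore consequence of knowing its indecomposables alone --- it rests on the freeness of the motivic Lie algebra (Borel's theorem) together with Brown's Theorem 1.1 --- but you correctly identify this as the ingredient to cite rather than reprove.
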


The structure on $\mathcal{U}$ is as follows.
The noncommutative polynomial ring $\Q\langle f_{2i+1}\mid i\ge1 \rangle$ freely generated by symbols $f_{2i+1}$ in weight $2i+1$ is viewed as the universal enveloping algebra of the graded Lie algebra of $U_{dR}$ the prounipotent part of the motivic Galois group over $\Q$, i.e. a Hopf algebra with the product given by the shuffle product $\shuffle$ and the coproduct given by the deconcatenation $\Delta^\bullet$ referred to \cite[Eq.~(2.20)]{B1}.
With a commutative symbol $f_2$ of weight $2$, the vector space $\mathcal{U}=\Q\langle f_{2i+1}\mid i\ge1 \rangle\otimes_\Q \Q[f_2]$ forms a graded vector space $\mathcal{U}=\bigoplus_{N\ge0} \mathcal{U}_N$, where $\mathcal{U}_N$ denotes the $\Q$-vector subspace of $\mathcal{U}$ spanned by all words $f_{2a_1+1}\cdots f_{2a_r+1}f_2^k \ (=f_{2a_1+1}\cdots f_{2a_r+1}\otimes f_2^k)$ with $2a_1+1+\cdots +2a_r+1+2k=N$ ($a_1,\ldots,a_r\ge1,k\ge0$). 
The space $\mathcal{U}$ carries a graded $\mathcal{U}'$-comodule structure with the coaction $\Delta^\bullet : \mathcal{U} \rightarrow \mathcal{U}'\otimes \mathcal{U}$ such that $\Delta^\bullet(f_2)=1\otimes f_2$ and $\Delta^\bullet(wf_2^k)=\Delta^\bullet(w)\Delta^\bullet(f_2)^k$ for any $k>0$ and $w\in \mathcal{U}'$.

\

We remark that the map $\phi$ in Theorem \ref{thm:phi} is a composition of the maps Eq.~(2.15) and Eq.~(2.22) in \cite{B1}.
The map $\phi$ is non-canonical, but we make a choice of $\phi$ for depth 1 in Theorem \ref{thm:phi} (the compatibility follows from \cite[Lemma 3.2]{B1}).  
In what follows, we aim at computing an image of the motivic multiple zeta value of depths 2 and 3 under the map $\phi$.

\subsection{The parity theorem}
Since we often use the parity theorem (and its motivic version), we recall its basics. 

The parity theorem states that the multiple zeta value $\zeta(k_1,\ldots,k_r)$ can be written as a $\Q[\zeta(2)]$-linear combination of multiple zeta values of depth at most $r-1$, if $k_1+\cdots+k_r\not\equiv r \mod{2}$.
This theorem can be proved by using the regularized double shuffle relation modulo lower depths \cite[Proposition 17]{IKZ} (see also \cite[Proposition 6.4]{B1}).
Since motivic multiple zeta values satisfy the regularized double shuffle relation (see \cite[Theorem 7.4]{G2}), the parity theorem holds for the motivic multiple zeta value.
For other approaches to the parity theorem, see \cite{Tsumura}.

Explicit formulas for the parity theorem are not known in general. 
A few cases are known: depth 2 case is due to Zagier \cite[Proposition 7]{Z3} and depth 3 case is obtained by Panzer \cite[Eq.~(1.12)]{P}.
The proof of Zagier's formula uses the regularized double shuffle relation in depth 2, so it is lifted to motivic, but we do not know if Panzer's formula is lifted or not.
As an application of our usage of the theory of motivic multiple zeta values, a kind of explicit formula of the parity theorem of motivic triple zeta values will be obtained (see Theorem \ref{9_1}).

\subsection{Key lemma}

In this subsection, we first define infinitesimal coactions $D_m$ and $D_m^\bullet$ and then recall \cite[Lemma 2.4]{B1}.

Denote by $\mathcal{A}_N$ the image of $\mathcal{H}_N$ under the natural projection $\mathcal{H}\rightarrow \mathcal{A}$.
Set $\mathcal{A}_{>0}=\bigoplus_{N\ge1}\mathcal{A}_N$ and $\mathcal{L}=\Asp_{>0}/(\Asp_{>0})^2$.
Let $\pi_m:\Asp_{>0}\rightarrow \mathcal{L}_m$ be the natural projection taking the graded weight $m$ part $\mathcal{L}_m$ of the graded vector space $\mathcal{L}=\bigoplus_{m>0}\mathcal{L}_m$. 
An infinitesimal coaction $D_m$ of $\Delta$ is defined for all odd integer $m\ge3$ by the following composition map:
\[ D_m : \mathcal{H} \overset{\Delta-\epsilon\otimes {\rm id}}{\longrightarrow}  \mathcal{A}_{>0} \otimes  \mathcal{H} \overset{\pi_m\otimes {\rm id}}{\longrightarrow} \mathcal{L}_m \otimes  \mathcal{H},\]
where $\epsilon:\mathcal{A}\rightarrow \mathcal{A}_{>0}$ is counit that kills $\Q=\mathcal{A}_0$.
The infinitesimal coaction $D_m$ can be computed by motivic iterated integrals (see \cite[Eq.~(3.4)]{B1} for the formula).
In what follows, we denote by $\xi_m$ the image of $\zeta^\m(m)$ under the projection:
\begin{align*}
\mathcal{H}_m&\longrightarrow \mathcal{L}_m \\
\zeta^\m(m) &\longmapsto \xi_m.
\end{align*} 

Let $\mathcal{U}'=\Q\langle f_{2i+1}\mid i\ge1 \rangle$. 
It is graded by weight $\mathcal{U}'=\bigoplus_{N\ge0} \mathcal{U}'_N$.
Set $\mathcal{U}_{>0}'= \bigoplus_{N>0} \mathcal{U}'_N$ and $L = \bigoplus_{m>0} L_m = \mathcal{U}_{>0}' \big/ \big(\mathcal{U}_{>0}' \big)^2$.
Denote by $\pi_m' :\mathcal{U}_{>0}'\rightarrow L_m$ the projection.
An infinitesimal coaction $D_m^\bullet$ of the deconcatenation $\Delta^\bullet$ (see \cite[Eq.~(2.25)]{B1}) is defined in the same manner as $D_m$ by 
\[ D^\bullet_{m} : \mathcal{U} \overset{\Delta^\bullet-\epsilon'\otimes {\rm id}}{\longrightarrow} \mathcal{U}_{>0}'\otimes \mathcal{U} \overset{\pi_m'\otimes {\rm id}}{\longrightarrow} L_m\otimes \mathcal{U},\]
where $\epsilon':\mathcal{U}'\rightarrow \mathcal{U}'_{>0}$ is counit.

\begin{lemma}\label{lem:key}
(i) For $m\ge3$ odd the following diagram commutes:
\begin{equation*}
\begin{aligned}
\xymatrix{  
 \Zsp   \ar@{->}[r]^{D_m} \ar@{->}[d]^{\phi} & \mathcal{L}_m \otimes \Zsp \ar@{->}[d]^{\overline{\phi}\otimes \phi} \\ 
 \mathcal{U}   \ar@{->}[r]^{D_m^\bullet}  & L_m \otimes \mathcal{U}  }
 \end{aligned}\, ,
 \end{equation*}
where $\overline{\phi}:\mathcal{L} \rightarrow L$ is the induced homomorphism from $\phi$ that sends $\xi_m$ to $\pi_m'(f_m)$. \\
(ii) {\rm (\cite[Lemma 2.4]{B1})} 
We have
\[ \ker \sum_{\substack{1<m<N\\m:{\rm odd}}} D_m^\bullet\big|_{\mathcal{U}_N} = \Q f_N.\]
\end{lemma}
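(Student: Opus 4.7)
The plan for part (i) is to extract everything from the compatibility data built into the algebra-comodule isomorphism $\phi$ of Theorem \ref{thm:phi}; part (ii) I would simply cite from \cite[Lemma 2.4]{B1}.

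For (i), the first step is to unpack what ``isomorphism of algebra-comodules'' means. In addition to the algebra isomorphism $\phi:\mathcal{H}\to\mathcal{U}$ on comodules, there is an induced Hopf algebra isomorphism $\phi_\mathcal{A}:\mathcal{A}\to\mathcal{U}'$ obtained by passing $\phi$ to the quotients (modulo $\zeta^\m(2)\mathcal{H}$ on the one side and modulo the ideal generated by $f_2$ on the other), and this $\phi_\mathcal{A}$ intertwines the two coactions:
\[
(\phi_\mathcal{A}\otimes\phi)\circ\Delta \;=\; \Delta^\bullet\circ\phi.
\]
Because $\phi$ is weight-graded and sends counits to counits, $\phi_\mathcal{A}$ restricts to $\mathcal{A}_{>0}\to\mathcal{U}'_{>0}$ and, being an algebra map, carries $(\mathcal{A}_{>0})^2$ into $(\mathcal{U}'_{>0})^2$. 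Hence $\phi_\mathcal{A}$ descends modulo squares to a weight-graded map $\bar\phi:\mathcal{L}\to L$ characterized by $\bar\phi\circ\pi_m=\pi_m'\circ\phi_\mathcal{A}$. The equality $\bar\phi(\xi_m)=\pi_m'(f_m)$ is then forced by $\phi(\zeta^\m(m))=f_m$, which is built into the choice of $\phi$ on depth one.

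The commutativity of the diagram in (i) now reduces to a one-line diagram chase. Applying $\pi_m'\otimes\mathrm{id}$ to the above intertwining relation and using $\bar\phi\circ\pi_m=\pi_m'\circ\phi_\mathcal{A}$ together with the compatibility of the augmentation parts ($\epsilon'\circ\phi_\mathcal{A}=\phi_\mathcal{A}\circ\epsilon$, which holds because $\phi_\mathcal{A}$ respects the weight grading) yields
\[
(\bar\phi\otimes\phi)\circ D_m \;=\; (\pi_m'\otimes\mathrm{id})\circ(\phi_\mathcal{A}\otimes\phi)\circ(\Delta-\epsilon\otimes\mathrm{id}) \;=\; (\pi_m'\otimes\mathrm{id})\circ(\Delta^\bullet-\epsilon'\otimes\mathrm{id})\circ\phi \;=\; D_m^\bullet\circ\phi.
\]
For part (ii), I would quote \cite[Lemma 2.4]{B1} without reproduction: the argument there inducts on weight and exploits the fact that $\Delta^\bullet$ on $\mathcal{U}$ is deconcatenation, so that $\sum_m D_m^\bullet$ records every non-trivial right factor of a word in the generators $f_{2i+1}$, and the only elements annihilated in $\mathcal{U}_N$ are the scalar multiples of $f_N$ (with $f_2$-factors handled through the comodule structure so that the sum ranges only over odd $m$).

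The main obstacle is conceptual bookkeeping rather than computation: one must keep straight the distinction between $\mathcal{H}$ as a comodule over $\mathcal{A}$ (so that $\phi$ comes paired with a Hopf-algebra map $\phi_\mathcal{A}$ on coefficient algebras, not a Hopf map on $\mathcal{H}$ itself) and the fact that the infinitesimal coactions $D_m$, $D_m^\bullet$ are the weight-$m$ components of these coactions after killing both the counit and the product. Once this setup is in place, commutativity of the diagram is built into the construction of $\bar\phi$, and no calculation with explicit iterated integrals is needed.
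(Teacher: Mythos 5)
Your proposal is correct and follows the same route as the paper: part (i) is exactly the observation that $\phi$ is an algebra-comodule homomorphism (you merely spell out the induced map $\overline{\phi}$ on $\mathcal{L}=\mathcal{A}_{>0}/(\mathcal{A}_{>0})^2$ and the resulting diagram chase, which the paper leaves implicit), and part (ii) is cited from \cite[Lemma 2.4]{B1} in both cases.
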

\begin{proof}
The statement i) follows from the fact that the map $\phi$ is an algebra-comodule homomorphism. 
For the proof of the statement ii), we refer to the reader to \cite[Lemma 2.4]{B1}.
\end{proof}

\subsection{A formula for $D_m$}

The infinitesimal coaction $D_m$ for motivic multiple zeta values is computed in several papers (\cite{B0,B1} are the first).
We describe an explicit formula for $D_m(\zeta^\m(n_1,\ldots,n_r))$ modulo lower depths given by Glanois \cite{Glanois} (a typo was corrected by Ma \cite[Proposition 8.2.1]{Ma}).

Let $\delta{\tbinom{m_1,\ldots,m_r}{n_1,\ldots,n_r}}$ be the Kronecker delta defined by
\[ \delta{\tbinom{m_1,\ldots,m_r}{n_1,\ldots,n_r}}= \begin{cases} 1 & \mbox{if $m_i=n_i$ for all $i\in \{ 1,\ldots,r\}$}\\
0 & \mbox{otherwise} \end{cases} \]
with $\delta(\varnothing)=1$.
We define the integer $b_{n,n'}^m$ for $n,n'\in \Z$ and $m\in\Z_{\ge1}$ by
\begin{equation*}
b_{n,n'}^{m}=(-1)^{n}\binom{m-1}{n-1}+(-1)^{n'-m} \binom{m-1}{n'-1},
\end{equation*}
where $\binom{m}{n}=0$ for each $n<0$ and for the case $m<n$.
It is obvious that for any odd integer $m\ge3$ one has $b_{n,n'}^m+b_{n',n}^m=0$.

\begin{definition}
For $r\ge1$ and $r$-tuples of positive integers $(m_1,\ldots,m_r)$ and $(n_1,\ldots,n_r)$, we define
\begin{equation*}
e{\tbinom{m_1,\ldots,m_r}{n_1,\ldots,n_r}}= \delta{\tbinom{m_1,\ldots,m_r}{n_1,\ldots,n_r}}+ \sum_{i=1}^{r-1} \delta{\tbinom{m_2,\ldots,m_i,m_{i+2},\ldots,m_r}{n_1,\ldots,n_{i-1},n_{i+2},\ldots,n_r}} b_{n_{i},n_{i+1}}^{m_1} \in \Z.
\end{equation*}
\end{definition}

For example, we have $e\tbinom{m_1}{n_1}=\delta\tbinom{m_1}{n_1}$ for $r=1$,  $e\tbinom{m_1,m_2}{n_1,n_2}=\delta\tbinom{m_1,m_2}{n_1,n_2}+b_{n_1,n_2}^{m_1}$ for $r=2$ and  
\begin{equation}\label{eq:def_e3}
 e\tbinom{m_1,m_2,m_3}{n_1,n_2,n_3}= \delta\tbinom{m_1,m_2,m_3}{n_1,n_2,n_3} + \delta\tbinom{m_3}{n_3}b_{n_1,n_2}^{m_1} + \delta\tbinom{m_2}{n_1}b_{n_2,n_3}^{m_1}\quad \mbox{for} \ r=3.
\end{equation}
Denote by $\depth_r\Zsp$ the $\Q$-vector space spanned by all motivic multiple zeta values of depth $\le r$:
\[ \depth_r\Zsp = \langle \zeta^\m (n_1,\ldots,n_s)\mid n_1,\ldots,n_s\ge1,0\le s \le r \rangle_\Q.\]
The following proposition is derived from \cite[Proposition 8.2.1]{Ma} for level 1 (see also \cite[Lemma 2.8]{Glanois}).

\begin{proposition}\label{prop:D_m}
For any integers $n_1,\ldots,n_r\ge1$ with $N=n_1+\cdots+n_r$ and $m\ge3$ odd, the element
\[
D_m \big( \zeta^\m(n_1,\ldots,n_r) \big) - \sum_{\substack{m_1+\cdots+m_r=N\\ m_1,\ldots,m_r\ge1}} \delta\tbinom{m_1}{m}e{\tbinom{m_1,\ldots,m_r}{n_1,\ldots,n_r}} \xi_{m_1} \otimes \zeta^\m(m_2,\ldots,m_r)
\]
lies in $\mathcal{L}_{m} \otimes \depth_{r-2} \Zsp_{N-m}$, where $\depth_r\Zsp_N=\depth_r\Zsp\cap \Zsp_N$, where $\xi_m$ is an image of $\zeta^\m(m)$ in the space $\mathcal{L}$.
\end{proposition}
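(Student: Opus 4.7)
The plan is to derive the formula from Brown's explicit expression for $D_m$ on motivic iterated integrals \cite[Eq.~(3.4)]{B1}, using the standard iterated-integral presentation of motivic MZVs.

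First I would write
\[ \zeta^\m(n_1,\ldots,n_r) = (-1)^r\, I^\m(0;\, a_1,\ldots,a_N;\, 1), \]
where $(a_1,\ldots,a_N)$ is the concatenation of the blocks $1\,0^{n_i-1}$ in the order prescribed by Brown's convention. Applying the formula of \cite[Eq.~(3.4)]{B1}, $D_m$ of this integral equals (modulo $(\Asp_{>0})^2 \otimes \Zsp$) the sum over starting positions $0\le p\le N-m$ of
\[ I^\m(a_p;\, a_{p+1},\ldots,a_{p+m};\, a_{p+m+1}) \,\otimes\, I^\m(0;\, a_1,\ldots,a_p, a_{p+m+1},\ldots,a_N;\, 1). \]
Only cuts with $(a_p,a_{p+m+1}) \in \{(0,1),(1,0)\}$ survive in $\mathcal{L}_m$, since otherwise the left factor is either zero or a polynomial in $\log$'s (and hence a product).

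Next I would organize the sum by the number $s$ of $1$'s among the excised letters $a_{p+1},\ldots,a_{p+m}$. The right factor is a motivic MZV whose word contains exactly $r-s$ internal $1$'s, so its depth is at most $r-s$. Every term with $s\ge 2$ therefore lies in $\mathcal{L}_m \otimes \depth_{r-2}\Zsp_{N-m}$, which is the claimed error. The case $s=0$ vanishes outright: the excised window is $0^m$, forcing the left factor to be $I^\m(0;0^m;1)$ or $I^\m(1;0^m;0)$, both of which are zero. Consequently only the $s=1$ cuts contribute to the main term.

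The $s=1$ cuts split into two geometric types. The \emph{boundary} type excises an entire block $1\,0^{n_1-1}$ at the edge of $w$; this forces $m=n_1$, gives left factor $\xi_m$ and right factor $\zeta^\m(n_2,\ldots,n_r)$, reproducing the term $\delta\tbinom{m_1}{m}\delta\tbinom{m_1,m_2,\ldots,m_r}{n_1,n_2,\ldots,n_r}\,\xi_m\otimes \zeta^\m(m_2,\ldots,m_r)$. The \emph{interior} type is a cut straddling two consecutive blocks and containing the $i$-th $1$ of $w$; writing the excised window as $0^{a}\,1\,0^{b}$ with $a+b+1=m$ and using the reversal identity $I^\m(\alpha;x_1,\ldots,x_n;\beta)=(-1)^n I^\m(\beta;x_n,\ldots,x_1;\alpha)$ to reduce the left factor to $\xi_m$, the right factor collapses the pair $(n_i,n_{i+1})$ into a single entry $n_i+n_{i+1}-m$. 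A direct expansion shows that the two placements of the cut (with the $1$ near its left or its right end) contribute $(-1)^{n_i}\binom{m-1}{n_i-1}$ and $(-1)^{n_{i+1}-m}\binom{m-1}{n_{i+1}-1}$ respectively, whose sum is exactly $b_{n_i,n_{i+1}}^m$. Assembling these against the constraint $m_1+\cdots+m_r=N$ recovers the two-summand expression defining $e\tbinom{m_1,\ldots,m_r}{n_1,\ldots,n_r}$.

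The chief obstacle is the sign- and binomial-bookkeeping in the interior subcase: one must enumerate all $s=1$ cuts containing the $i$-th $1$, apply the reversal identity with the correct sign $(-1)^m$, and verify that the two resulting binomial contributions combine into $b_{n_i,n_{i+1}}^m$. The $s\ge 2$ depth bound and the $s=0$ vanishing are essentially automatic, so the combinatorial heart of the argument lies entirely in identifying the coefficient $b_{n,n'}^m$; once this is done, the formula assembles itself. An alternative is simply to invoke \cite[Proposition 8.2.1]{Ma} and \cite[Lemma 2.8]{Glanois}, where this analysis has already been carried out in the level-$1$ setting.
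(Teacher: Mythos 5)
Your proposal is correct and in substance coincides with the paper's: the paper gives no independent argument for Proposition \ref{prop:D_m}, deriving it by citation from \cite[Proposition 8.2.1]{Ma} and \cite[Lemma 2.8]{Glanois}, and your cut-by-cut analysis of Brown's formula \cite[Eq.~(3.4)]{B1} --- discarding the $s=0$ and $s\ge2$ cuts into the $\mathcal{L}_m\otimes\depth_{r-2}\Zsp_{N-m}$ error term and matching the two surviving $s=1$ placements to the two binomials in $b^{m}_{n,n'}$ --- is exactly the computation those references carry out. (Two cosmetic points only: the interior cut straddling blocks $i$ and $i+1$ contains the $(i+1)$-st $1$, not the $i$-th, and the right-edge cuts do not form a second boundary $\delta$-term but are absorbed into the $i=r-1$ summand of $e\tbinom{m_1,\ldots,m_r}{n_1,\ldots,n_r}$.)
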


\subsection{A canonical part of $\phi$}

Using Proposition \ref{prop:D_m}, one can compute leading terms of $\phi(\zeta^\m(n_1,\ldots,n_r))$ for $r\le3$.

\begin{definition}\label{def:c}
Let $N,m_1,m_2,m_3,n_1,n_2,n_3$ be positive integers such that $N=m_1+m_2+m_3=n_1+n_2+n_3$, $m_1,m_2\ge3$ odd and $m_3\ge2$.
We define the integer $c\tbinom{m_1,m_2,m_3}{n_1,n_2,n_3}$ by
 \begin{equation*}
c\tbinom{m_1,m_2,m_3}{n_1,n_2,n_3} = \sum_{\substack{k_1+k_2+k_3=N\\ k_1,k_2,k_3\ge1}}  \delta\tbinom{m_1}{k_1}e\tbinom{m_2,m_3}{k_2,k_3} e\tbinom{k_1,k_2,k_3}{n_1,n_2,n_3} .
\end{equation*}
\end{definition}

\begin{proposition}\label{prop:formula_phi}
i) For positive integers $n_1,n_2$, we have
\begin{equation}\label{eq:dep2_phi} 
\phi(\zeta^\m(n_1,n_2)) - \sum_{\substack{m_1+m_2=n_1+n_2\\m_1\ge3;{\rm odd}\\m_2\ge2}} e\tbinom{m_1,m_2}{n_1,n_2} f_{m_1}f_{m_2} \in\Q f_{n_1+n_2}.
\end{equation}
ii) For positive integers $n_1,n_2,n_3$, we have
\begin{equation}\label{eq:dep3_phi}  
\phi(\zeta^\m(n_1,n_2,n_3) ) -\sum_{\substack{m_1+m_2+m_3=N\\m_1,m_2\ge3:{\rm odd}\\m_3\ge2}} c\tbinom{m_1,m_2,m_3}{n_1,n_2,n_3} f_{m_1}f_{m_2}f_{m_3} \in \mathcal{U}_{N,2},
\end{equation}
where $N=n_1+n_2+n_3$ and $ \mathcal{U}_{N,2}$ is the $\Q$-vector space spanned by $f_{2n+1}f_{N-2n-1} \ (1\le n< (N-1)/2)$ and $f_N$.
\end{proposition}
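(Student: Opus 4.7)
The plan is to apply the infinitesimal coactions $D_m^\bullet$ to both sides of each identity, translate via Lemma~\ref{lem:key}(i) to the motivic side where Proposition~\ref{prop:D_m} supplies an explicit formula, and use the kernel description of Lemma~\ref{lem:key}(ii) to conclude.

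For part (i), since $\depth_0\mathcal{H}_{N-m}=0$ for every $m<N$, Proposition~\ref{prop:D_m} with $r=2$ yields an exact formula. Applying $\bar\phi\otimes\phi$ via Lemma~\ref{lem:key}(i) gives
\[ D_m^\bullet\phi(\zeta^\m(n_1,n_2)) = e\tbinom{m,N-m}{n_1,n_2}\,\pi'_m(f_m)\otimes f_{N-m} \]
for every odd $m\in[3,N-1]$. A parallel direct computation of $D_m^\bullet$ on $\sum e\tbinom{m_1,m_2}{n_1,n_2}f_{m_1}f_{m_2}$ via deconcatenation (higher-depth products are annihilated by $\pi'_m$) produces the same expression. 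The difference therefore lies in $\bigcap_m\ker D_m^\bullet=\Q f_N$ by Lemma~\ref{lem:key}(ii).

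For part (ii), let $Y$ denote the difference. The same recipe with $r=3$ introduces an error in $\mathcal{L}_m\otimes\depth_1\mathcal{H}_{N-m}$, which maps under $\bar\phi\otimes\phi$ into $L_m\otimes\Q f_{N-m}$. Substituting the formula from part (i) for $\phi(\zeta^\m(m_2,m_3))$ in the resulting image and collecting, the coefficient of $\pi'_m(f_m)\otimes f_{k_2}f_{k_3}$ becomes $\sum_{m_2,m_3}e\tbinom{m,m_2,m_3}{n_1,n_2,n_3}\,e\tbinom{k_2,k_3}{m_2,m_3}$, which is precisely $c\tbinom{m,k_2,k_3}{n_1,n_2,n_3}$ by Definition~\ref{def:c}. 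A direct computation of $D_m^\bullet$ on the right-hand side of \eqref{eq:dep3_phi} yields the same depth-2 expression, so $D_m^\bullet Y\in L_m\otimes\Q f_{N-m}$ for every odd $m\in[3,N-1]$.

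To finish, write $D_m^\bullet Y=c_m\,\pi'_m(f_m)\otimes f_{N-m}$ and set $Y'=Y-\sum_{m}c_m f_m f_{N-m}$, where $m$ runs over the odd integers in $[3,N-2]$. The identity $D_{m'}^\bullet(f_m f_{N-m})=\delta_{m,m'}\pi'_m(f_m)\otimes f_{N-m}$ guarantees $D_{m'}^\bullet Y'=0$ for every odd $m'\in[3,N-1]$, whence $Y'\in\Q f_N$ by Lemma~\ref{lem:key}(ii). Consequently $Y$ lies in $\Q f_N+\sum_{m}\Q f_m f_{N-m}\subseteq\mathcal{U}_{N,2}$. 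The main hurdle is the coefficient bookkeeping---verifying that the collapse of the double sum to $c\tbinom{m,k_2,k_3}{n_1,n_2,n_3}$ really is immediate from Definition~\ref{def:c}---together with carefully tracking which error terms land in $L_m\otimes\Q f_{N-m}$ so that they can be absorbed by the canonical depth-2 corrections.
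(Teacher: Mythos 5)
Your part (i) is essentially the paper's proof: Proposition \ref{prop:D_m} with $r=2$ has error in $\mathcal{L}_m\otimes\depth_0\Zsp_{N-m}=0$, and Lemma \ref{lem:key} does the rest. The coefficient bookkeeping in part (ii) collapsing the double sum to $c\tbinom{m,k_2,k_3}{n_1,n_2,n_3}$ is also correct and is exactly what Definition \ref{def:c} was designed for.

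However, part (ii) has a genuine gap at the step ``write $D_m^\bullet Y=c_m\,\pi'_m(f_m)\otimes f_{N-m}$.'' What you have actually established up to that point is only that the error lies in $L_m\otimes\Q f_{N-m}$, and $L_m=\mathcal{U}'_{>0}/(\mathcal{U}'_{>0})^2$ in weight $m$ is \emph{not} one-dimensional for $m\ge 11$ odd (e.g.\ $L_{11}$ also contains the class of $f_3f_3f_5$, which is not a shuffle product). An error component of the form $\pi'_m(f_af_bf_c)\otimes f_{N-m}$ could not be absorbed by a correction term $f_mf_{N-m}\in\mathcal{U}_{N,2}$ --- it would force a depth-$4$ contribution to $Y$ --- so your final containment $Y\in\mathcal{U}_{N,2}$ does not follow. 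The paper closes this gap with two additional inputs: first, $D_m$ preserves depth (by the explicit formula \cite[Eq.~(3.4)]{B1}), so the left tensor factor of the error from Proposition \ref{prop:D_m} actually lies in $\depth_2\mathcal{L}_m$ (the input has depth $3$ and the right factor already carries depth $1$); second, the parity theorem gives $\depth_2\mathcal{L}_m=\Q\xi_m$ for $m\ge3$ odd, since double zeta values of odd weight reduce to products and single zetas. Only then is the error a rational multiple of $\xi_m\otimes\zeta^\m(N-m)$, hence of $\pi'_m(f_m)\otimes f_{N-m}$ after applying $\overline{\phi}\otimes\phi$. This is not a cosmetic point: the paper remarks immediately after the proof that the failure of $\depth_{r-1}\mathcal{L}_m=\Q\xi_m$ for $r\ge4$ is precisely what blocks the generalization to higher depth, so the parity input is the load-bearing step you have omitted. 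You should insert the depth-preservation observation and the appeal to the parity theorem before normalizing $D_m^\bullet Y$; the remainder of your argument (subtracting $\sum_m c_mf_mf_{N-m}$ and invoking Lemma \ref{lem:key}(ii)) then goes through as in the paper.
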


\begin{proof}
We prove i). 
It follows from \cite[Eq.~(3.4)]{B1} that the depth is preserved by $D_m$.
Thus, Proposition \ref{prop:D_m} for the case $r=2$ implies that
\[ D_m (\zeta^\m(n_1,n_2)) = e\tbinom{m,N-m}{n_1,n_2} \xi_{m}\otimes \zeta^\m (N-m)\]
holds for $m\ge3$ odd with $N=n_1+n_2$.
Using the commutative diagram in Lemma \ref{lem:key} i), one computes
\begin{align*}
\sum_{\substack{1<m<N\\m:{\rm odd}}} D_m^\bullet \circ \phi (\zeta^\m(n_1,n_2)) &= \sum_{\substack{1<m<N\\m:{\rm odd}}} (\overline{\phi} \otimes \phi)\circ D_m  (\zeta^\m(n_1,n_2)) \\
&= \sum_{\substack{1<m<N\\m:{\rm odd}}} e\tbinom{m,N-m}{n_1,n_2} \overline{f}_m \otimes f_{N-m},
\end{align*}
where we set $\pi'_m(f_m)=\overline{f}_m$ and put $f_1=0$ when $m=N-1$.
On the other hand, a straightforward calculation of $D_m^\bullet$ gives
\begin{align*}
& \sum_{\substack{1<m<N\\m:{\rm odd}}} D_{m}^\bullet \big(\sum_{\substack{m_1+m_2=N\\m_1\ge3:{\rm odd}\\m_2\ge2}}e\tbinom{m_1,m_2}{n_1,n_2} f_{m_1} f_{m_2} \big) = \sum_{\substack{m_1+m_2=N\\m_1\ge3:{\rm odd}\\m_2\ge2}}e\tbinom{m_1,m_2}{n_1,n_2} \overline{f}_{m_1} \otimes f_{m_2}.
\end{align*}
Thus, one gets 
\[ \phi (\zeta^\m(n_1,n_2)) - \sum_{\substack{m_1+m_2=N\\m_1\ge3:{\rm odd}\\m_2\ge2}}e\tbinom{m_1,m_2}{n_1,n_2} f_{m_1} f_{m_2} \in \ker \sum_{\substack{1<m<N\\m:{\rm odd}}} D_m^\bullet,\]
and by Lemma \ref{lem:key} ii) we have \eqref{eq:dep2_phi}.

Let us turn to the proof of ii). 
From Proposition \ref{prop:D_m} for the case $r=3$, we have
\begin{equation}\label{eq:formula_Dm_dep3}
\begin{aligned}
D_m(\zeta^\m(n_1,n_2,n_3)) - \sum_{\substack{k_1+k_2+k_3Nk\\k_1,k_2,k_3\ge1}} \delta\tbinom{m}{k_1} e\tbinom{k_1,k_2,k_3}{n_1,n_2,n_3} \xi_{k_1}\otimes \zeta^\m (k_2,k_3)\in \Q \mathcal{L}_m \otimes \Q \zeta^\m (N-m).
\end{aligned}
\end{equation}
Since the depth is preserved by $D_m$, the left-hand side of \eqref{eq:formula_Dm_dep3} actually lies in $\depth_2 \mathcal{L}_m \otimes \Q \zeta^\m (k-m)$, where $\depth_r \mathcal{L}_m$ is a natural image of $\depth_r \mathcal{H}_m$.
Since $m\ge3$ odd, it follows from the parity theorem that $\depth_2 \mathcal{L}_m=\Q\xi_m$, and hence, the left-hand side of \eqref{eq:formula_Dm_dep3} is a rational multiple of $\xi_m\otimes \zeta^\m (N-m)$.
Therefore, using the commutative diagram in Lemma \ref{lem:key} i), we see that there is a rational number $c_m$ such that
\begin{align*}
&D_m^\bullet \circ \phi (\zeta^\m(n_1,n_2,n_3))
=(\overline{\phi} \otimes \phi)\circ D_m   (\zeta^\m(n_1,n_2,n_3)) \\
&= \sum_{\substack{k_1+k_2+k_3=N\\k_1,k_2,k_3\ge1}} e\tbinom{k_1,k_2,k_3}{n_1,n_2,n_3} \pi_m'(f_{k_1})\otimes \phi(\zeta^\m(k_2,k_3))+ c_m \overline{f}_m \otimes f_{N-m}.
\end{align*}
Summing these up and making use of \eqref{eq:dep2_phi} and Definition \ref{def:c}, we have
\begin{align*}  
& \sum_{\substack{1<m<k\\m:{\rm odd}}} D_m^\bullet \circ \phi (\zeta^\m(n_1,n_2,n_3)) \\
& = \sum_{\substack{m_1+m_2+m_3=k\\m_1,m_2\ge3:{\rm odd}\\m_3\ge2}}  c\tbinom{m_1,m_2,m_3}{n_1,n_2,n_3}\overline{f}_{m_1}\otimes f_{m_2}f_{m_3} + \sum_{\substack{1<m<k\\m:{\rm odd}}} c_m' \overline{f}_m \otimes f_{k-m}
\end{align*}
with some $c_m'\in\Q$.
On the other hand, by definition one has
\begin{align*}
&\sum_{\substack{1<m<k\\m:{\rm odd}}} D_m^\bullet \Big( \sum_{\substack{m_1+m_2+m_3=k\\m_1,m_2\ge3:{\rm odd}\\m_3\ge2}}  c\tbinom{m_1,m_2,m_3}{n_1,n_2,n_3}f_{m_1}f_{m_2}f_{m_3}  \Big)=  \sum_{\substack{m_1+m_2+m_3=k\\m_1,m_2\ge3:{\rm odd}\\m_3\ge2}} c\tbinom{m_1,m_2,m_3}{n_1,n_2,n_3}\overline{f}_{m_1}\otimes f_{m_2}f_{m_3}.
\end{align*}
Hence the element
\begin{align*}
& \phi (\zeta^\m(n_1,n_2,n_3)) - \sum_{\substack{m_1+m_2+m_3=k\\m_1,m_2\ge3:{\rm odd}\\m_3\ge2}} c\tbinom{m_1,m_2,m_3}{n_1,n_2,n_3}f_{m_1}f_{m_2}f_{m_3} - \sum_{\substack{1<m<k\\m:{\rm odd}}} c_m'f_m f_{k-m}
\end{align*}
lies in $\ker \sum_{\substack{1<m<k\\m:{\rm odd}}} D_m^\bullet$. 
The formula \eqref{eq:dep3_phi} follows from Lemma \ref{lem:key} ii).
\end{proof}


From the proof, we see that the integers $e\tbinom{m_1,m_2}{n_1,n_2}$ and $c\tbinom{m_1,m_2,m_3}{n_1,n_2,n_3}$ in \eqref{eq:dep2_phi} and \eqref{eq:dep3_phi} do not depend on choices of $\phi$ for depth $\ge2$.
We note that in the case when $n_1+n_2$ even, the coefficient of $f_{n_1+n_2}$ in \eqref{eq:dep2_phi} does depend on choices of $\phi$ for depth 2.

No attempt has been made here to generalize Proposition \ref{prop:formula_phi} for depth $r\ge4$.
The crucial difference is the fact that $\depth_{r-1} \mathcal{L}_m \neq \Q\xi_m$ holds in general.

\subsection{Choice of $\phi$ for depth 2}

One can make a choice of $\phi$ for depth 2 (note that the proof of our result on depth 2 does not need this choice).
From \eqref{eq:dep2_phi}, this is equivalent to determine rational numbers $\tau(n_1,n_2)\in\Q$ such that
\begin{equation}\label{eq:dep2_tau_def}
\phi(\zeta^\m(n_1,n_2)) = \sum_{\substack{m_1+m_2=n_1+n_2\\m_1\ge3;{\rm odd}\\m_2\ge2}} e\tbinom{m_1,m_2}{n_1,n_2} f_{m_1}f_{m_2}+\tau(n_1,n_2)f_{n_1+n_2}.
\end{equation}

We use the regularized double shuffle relations of multiple zeta values for depth 2 (see \cite{GKZ,IKZ}).
It is known that the motivic multiple zeta values satisfy the regularized double shuffle relations (see \cite[Theorem 7.4]{G2}).
Hence, the dimension formula \cite[Proposition 18]{IKZ} implies that all relations among motivic double zeta values and single zeta values are obtained from double shuffle relations
\begin{equation}\label{eq:ds}
\begin{aligned}
\zeta^\m(n_1)\zeta^\m(n_2) &= \zeta^\m(n_1,n_2)+\zeta^\m(n_2,n_1) + \zeta^\m(n_1+n_2) \\
&=\sum_{m_1+m_2=n_1+n_2} \left( \binom{m_2-1}{n_1-1}+\binom{m_2-1}{n_2-1}\right) \zeta^\m(m_1,m_2)
\end{aligned} \quad (n_1,n_2\ge1)
\end{equation}
and $\zeta^\m(n_1)\zeta^\m(n_2)=\frac{\beta_{n_1}\beta_{n_2}}{\beta_{n_1+n_2}}\zeta^m(n_1+n_2)$ for $n_1,n_2\ge2$ even, where for $k\in \Z_{\ge0}$, with the $k$th Bernoulli number $B_k$ we let 
\begin{equation}\label{beta} \beta_k=\begin{cases} -\frac{B_k}{2k!} &\mbox{if $k$ even} \\ 0 & \mbox{if $k$ odd} \end{cases}.
\end{equation} 
Since these relations are preserved by the map $\phi$, for $N=n_1+n_2$ even, applying $\phi$ to these relations and then using 
\[ \phi(\zeta^\m(n_1)\zeta^\m(n_2))=\begin{cases} f_{n_1}\shuffle f_{n_2} & \mbox{if }\ n_1,n_2\ge1 :\ {\rm odd} \\ 
\frac{\beta_{n_1}\beta_{n_2}}{\beta_{n_1+n_2}}f_{n_1+n_2} &   \mbox{if }\ n_1,n_2\ge2 :\ {\rm even}
\end{cases}\]
with $f_1=0$ and taking the coefficient of $f_N$, we get a system of linear equations satisfied by $\tau(n_1,n_2)$ as follows:
\begin{equation}\label{eq:ds_eq}
\begin{aligned}
\frac{\beta_{n_1}\beta_{n_2}}{\beta_N} &=\tau(n_1,n_2)+\tau(n_2,n_1) +1\\
&= \sum_{m_1+m_2=N} \left( \binom{m_2-1}{n_1-1}+\binom{m_2-1}{n_2-1}\right) \tau(m_1,m_2).
\end{aligned}
\end{equation}
A solution to the equations \eqref{eq:ds_eq} gives a choice of $\phi$ for depth 2.
A specific solution already appears in a work of Gangl, Kaneko and Zagier \cite{GKZ}.

\begin{proposition}\label{prop:tau_formula}
For integers $n_1,n_2\ge1$ with $N=n_1+n_2$ even, let
\begin{align*}
\tau(n_1,n_2)&= -\frac{1}{12} \left( 5+(-1)^{n_2} \binom{N-1}{n_2-1} - (-1)^{n_2} \binom{N-1}{n_2}\right)\\
& +\frac{\beta_{n_1}\beta_{n_2}}{3\beta_N} + \frac{(-1)^{n_2}}{3\beta_N} \sum_{j=2}^{N} \binom{j-1}{n_2-1}\beta_j \beta_{N-j}.
\end{align*}
Then, these are a solution to \eqref{eq:ds_eq}.
\end{proposition}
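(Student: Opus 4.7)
The plan is to verify directly that the proposed $\tau$ satisfies both equations of \eqref{eq:ds_eq}. It is convenient to decompose
\[
\tau(n_1,n_2)=A(n_1,n_2)+B(n_1,n_2)+C(n_1,n_2),
\]
with $A$ the ``combinatorial'' first summand, $B=\beta_{n_1}\beta_{n_2}/(3\beta_N)$, and $C=\frac{(-1)^{n_2}}{3\beta_N}\sum_{j=2}^{N}\binom{j-1}{n_2-1}\beta_j\beta_{N-j}$. Each piece transforms predictably under the two relations to be checked and contributes distinct types of terms (constants, binomial sums, and Bernoulli convolutions), which can be matched separately.

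For the first (stuffle) equation $\tau(n_1,n_2)+\tau(n_2,n_1)=\beta_{n_1}\beta_{n_2}/\beta_N-1$, I would use that $N$ is even (so $(-1)^{n_1}=(-1)^{n_2}$) together with the reflections $\binom{N-1}{n_1-1}=\binom{N-1}{n_2}$ and $\binom{N-1}{n_2-1}=\binom{N-1}{n_1}$. These make the two signed binomials in $A(n_1,n_2)+A(n_2,n_1)$ cancel against each other, leaving the constant $-5/6$. The $B$-part symmetrises trivially and contributes $2\beta_{n_1}\beta_{n_2}/(3\beta_N)$. The remainder, which must come from $C(n_1,n_2)+C(n_2,n_1)$, is equivalent to the classical Bernoulli convolution identity
\[
(-1)^{n_2}\sum_{j=0}^{N}\Bigl[\binom{j-1}{n_1-1}+\binom{j-1}{n_2-1}\Bigr]\beta_j\beta_{N-j}=\beta_{n_1}\beta_{n_2}-\tfrac{1}{2}\beta_N,
\]
which is an avatar of Euler's formula for $\zeta(\mathrm{even},\mathrm{even})$ and can be derived from the generating function $\sum_{k\ge0}\beta_k t^{k-1}=\tfrac{1}{2}\cot(t/2)-t^{-1}$.

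For the second (shuffle) equation, substitute $A$, $B$, $C$ in turn into $\sum_{m_1+m_2=N}\bigl(\binom{m_2-1}{n_1-1}+\binom{m_2-1}{n_2-1}\bigr)\tau(m_1,m_2)$. The $A$- and $B$-contributions collapse by Vandermonde-type binomial identities and a direct calculation. The $C$-contribution, after switching the order of summation, produces an inner sum of exactly the same shape as the identity above, and evaluates to the expected $\beta_{n_1}\beta_{n_2}/\beta_N$ term after some rearrangement.

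The main obstacle throughout is the Bernoulli convolution identity; once it is established (once, in one form), both equations reduce to binomial bookkeeping. Alternatively, one may simply invoke \cite{GKZ}, where this explicit solution is recorded in essentially the same form as part of their analysis of the double shuffle space in even weight.
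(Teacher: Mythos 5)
Your proof goes by direct verification, which is a genuinely different route from the paper's: the paper disposes of the proposition in two lines by observing that $\tau(n_1,n_2)$ is the coefficient of $x_1^{n_2-1}x_2^{n_1-1}$ in the explicit polynomial \eqref{eq:gkz_sol} obtained by specializing \cite[Supplement to Proposition 5]{GKZ} to $P=\sum\beta_{n_1}\beta_{n_2}x_1^{n_1-1}x_2^{n_2-1}$ and $\lambda=-\beta_N/2$, and then quoting that result for the fact that these coefficients solve \eqref{eq:ds_eq}; so your closing alternative ``simply invoke GKZ'' is in fact exactly the paper's proof. Your hands-on computation is sound where you carry it out: the reflection argument giving the constant $-5/6$ from the $A$-part is correct, and the Bernoulli convolution identity you isolate is true --- it is precisely what the term $\frac{1}{3\beta_N}G_N|(T^{-1}+1)$ encodes in GKZ's generating-function formulation, and is equivalent to the period-polynomial relation \eqref{eq:rel_G} for the Eisenstein generating function $\widehat{G}_N$ used later in Lemma \ref{8_5}. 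Two cautions on the parts you only sketch: in the shuffle equation the $C$-contribution does \emph{not} reproduce the convolution identity after interchanging summations; the inner sum $\sum_{m_2}(-1)^{m_2}\binom{j-1}{m_2-1}\binom{m_2-1}{n-1}$ collapses via $\binom{j-1}{m_2-1}\binom{m_2-1}{n-1}=\binom{j-1}{n-1}\binom{j-n}{m_2-n}$ and $(1-1)^{j-n}$ to a delta at $j=n$ plus a boundary term at $j=N$ (the truncation at $m_2\le N-1$), so that step is easier than you suggest but of a different shape, and the boundary term is essential for matching the binomial contributions from the $A$-part. Also, your convolution identity should start at $j=2$ (as in the definition of $C$); the $j=0$ term is harmless only under the paper's convention $\binom{m}{n}=0$ for $m<n$, not under the generalized binomial convention. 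The trade-off is the usual one: your route is elementary and self-contained modulo one classical Bernoulli identity, while the paper's buys brevity at the cost of importing the entire GKZ machinery.
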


We will recall the proof of Proposition \ref{prop:tau_formula} in Appendix A.
Note that in \cite[\S6]{GKZ} the number $\tau(n_1,n_2)$ is called the Bernoulli realization of the formal double zeta space.
It is worth pointing out that there are other solutions to \eqref{eq:ds_eq} (see \cite[\S7.3]{B2} and also Appendix A). 

As is mentioned in \cite[\S7.4]{B2}, an explicit choice of $\phi$ can be applied to an expression of a chosen basis of motivic multiple zeta values, and also to linear relations of multiple zeta values.
For example, with Proposition \ref{prop:tau_formula}, one has
\[ \phi(\zeta^\m(2,4))=- \frac{4}{3}f_6+2 f_3f_3,\ \phi(\zeta^\m(4,2))=\frac{25}{12}f_6-2 f_3f_3.\]
Hence $\phi(\zeta^m(2,4)+\zeta^m(4,2))=\frac{3}{4}f_6=\phi(\frac{3}{4}\zeta^\m(6))$ holds, which by the injectivity of $\phi$ and the period map \eqref{eq:per} gives the relation $\zeta(2,4)+\zeta(4,2)=\frac{3}{4}\zeta(6)$.

\begin{remark}
We briefly mention a formula for $\tau(n_1,n_2)$ in the case when $n_1+n_2$ is odd.
Let $N$ be a positive odd integer.
Since $\phi(\zeta^\m(odd)\zeta^\m(even))=f_{odd}f_{even}$, the expression \eqref{eq:dep2_tau_def} for positive integers $n_1,n_2\ge1$ with $N=n_1+n_2$ gives the relation of the form
\begin{equation}\label{eq:dep2_parity}
 \zeta^\m(n_1,n_2) = \sum_{\substack{m_1+m_2=N\\ m_1\ge3:{\rm odd}\\m_2\ge2}} e\tbinom{m_1,m_2}{n_1,n_2} \zeta^\m(m_1)\zeta^\m(m_2)+\tau(n_1,n_2)\zeta^\m(N).
 \end{equation}
It can be shown that the set $\{ \zeta^\m(m_1)\zeta^\m(m_2)\mid m_1+m_2=N ,m_1\ge3:{\rm odd}, m_2\ge2\}$ forms a basis of the $\Q$-vector space spanned by motivic double zeta values of weight $N$.
Hence, the number $\tau(n_1,n_2)$ in \eqref{eq:dep2_parity} is uniquely determined (not depending on choices of $\phi$ for depth 2!).
Its explicit formula is obtained from the coefficient of $\zeta^\m(N)$ in the motivic version of Zagier's explicit formula for the parity theorem of depth 2 \cite[Proposition 7]{Z3}, so we have
\[ \tau(n_1,n_2) = \frac{(-1)^{n_1+1}}{2} \left( (-1)^{n_1}+\binom{N-1}{n_1-1} + \binom{N-1}{n_2-1}\right) \quad (N=n_1+n_2:{\rm odd}).\]
\end{remark}

\section{Double zeta values and period polynomials}

\subsection{Statement of results}

In this subsection, we state our results on depth 2.

Set
\[\zeta^{\frac12}(r,s) = \zeta(r,s)+\frac{1}{2} \zeta(r+s),\]
which is a special case of Yamamoto's $t$-interpolated multiple zeta values at $t=\frac{1}{2}$ (see \cite{Yamamoto}).
Let $S_k$ denote the $\C$-vector space of cusp forms of weight $k$ for ${\rm SL}_2(\Z)$. 
The even (resp. odd) period polynomial $P_f^+(x,y)$ (resp. $P_f^-(x,y)$) of a cusp form $f\in S_k$ is defined as a generating polynomial of critical values at odd integer points $s\in\{1,3,\ldots,k-1\}$ (resp. at even integer points $s\in \{2,4,\ldots,k-2\}$) of the completed $L$-function $L_f^\ast(s) = \int_0^{\infty} f(it)t^{s-1}dt$:
\begin{align*}
P_f^+(x,y) &= \sum_{\substack{r+s=k\\r,s\ge1:{\rm odd}}} (-1)^{\frac{s-1}{2}} \binom{k-2}{s-1} L_f^\ast(s) x^{r-1}y^{s-1}\\
(\mbox{resp.}\ P_f^-(x,y) &= \sum_{\substack{r+s=k\\r,s\ge1:{\rm even}}} (-1)^{\frac{s}{2}} \binom{k-2}{s-1} L_f^\ast(s) x^{r-1}y^{s-1}).
\end{align*}

\begin{theorem}\label{main1}
For $f\in S_k$, we define numbers $a_{i,j}(f)\in\C$ by
\[P_f^{+}(x+y,x)   = \sum_{i+j=k} \binom{k-2}{i-1} a_{i,j}(f) x^{i-1}y^{j-1}.\]
Then we have
\begin{equation}\label{eq:main1}
 \sum_{\substack{r+s=k\\r\ge1:{\rm odd}\\s\ge3:{\rm odd}}} a_{r,s}(f) \zeta^{\frac12} (r,s) =0 .
 \end{equation}
\end{theorem}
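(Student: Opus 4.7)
My plan is to lift \eqref{eq:main1} to the motivic setting and exploit the algebra-comodule homomorphism $\phi$ of Theorem \ref{thm:phi}. Setting
\[
\zeta^{\m,1/2}(r,s):=\zeta^\m(r,s)+\tfrac12\zeta^\m(r+s),
\]
whose image under the period map \eqref{eq:per} is $\zeta^{1/2}(r,s)$, identity \eqref{eq:main1} will follow from the motivic statement $\sum a_{r,s}(f)\,\zeta^{\m,1/2}(r,s)=0$ in $\mathcal{H}_k$, which by injectivity of $\phi$ reduces to the vanishing of its $\phi$-image in $\mathcal{U}_k$.

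Applying Proposition \ref{prop:formula_phi}(i), and using that $k$ is even (so any $m_1\ge3$ odd with $m_1+m_2=k$ automatically forces $m_2\ge3$ odd), this $\phi$-image can be written as
\[
\sum_{\substack{m_1,m_2\ge3\text{ odd}\\m_1+m_2=k}}\!\Bigg(\sum_{r,s}a_{r,s}(f)\,e\tbinom{m_1,m_2}{r,s}\Bigg)f_{m_1}f_{m_2}+\lambda f_k
\]
for some $\lambda\in\mathbb{Q}$ depending on the choice of section $\phi$. Since $f_k$ is linearly independent from the $f_{m_1}f_{m_2}$ in $\mathcal{U}_k$, vanishing is equivalent to two conditions: (a) each $(m_1,m_2)$-coefficient vanishes, and (b) $\lambda=0$.

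For (a), I will unfold $e\tbinom{m_1,m_2}{r,s}=\delta\tbinom{m_1,m_2}{r,s}+b^{m_1}_{r,s}$ and note that, since $r,s,m_1$ are all odd, $b^{m_1}_{r,s}=\binom{m_1-1}{s-1}-\binom{m_1-1}{r-1}$, so (a) rewrites as
\[
a_{m_1,m_2}(f)=\sum_{r,s}a_{r,s}(f)\Big[\binom{m_1-1}{r-1}-\binom{m_1-1}{s-1}\Big].
\]
Because $P_f^+$ is antisymmetric (from $P_f|(1+S)=0$ with $S=\bigl(\begin{smallmatrix}0&-1\\1&0\end{smallmatrix}\bigr)$), one has $a_{k-1,1}(f)=P_f^+(1,1)=0$, so the sums above may safely be extended to include $s=1$. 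I will then derive this identity from the three-term Eichler--Shimura relation $P_f^+(x,y)+P_f^+(x+y,x)+P_f^+(y,x+y)=0$---a consequence of $P_f|(1+U+U^2)=0$ for $U=\bigl(\begin{smallmatrix}1&-1\\1&0\end{smallmatrix}\bigr)$ combined with the parity invariance $P_f^+(\pm x,\pm y)=P_f^+(x,y)$---by writing $Q_f(x,y):=P_f^+(x+y,x)=\sum\binom{k-2}{r-1}a_{r,s}(f)x^{r-1}y^{s-1}$, using $P_f^+(x,y)=Q_f(y,x-y)$ and $P_f^+(y,x+y)=Q_f(x+y,-x)$, extracting the coefficient of $x^{m_1-1}y^{m_2-1}$, and applying the Vandermonde identity $\binom{k-2}{r-1}\binom{s-1}{m_1-1}=\binom{k-2}{m_1-1}\binom{m_2-1}{s-m_1}$ (valid when $r+s=k$).

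For (b), once (a) is established, the image $\sum a_{r,s}(f)\,\phi(\zeta^{\m,1/2}(r,s))$ lies in $\mathbb{Q}f_k$, and it remains to prove $\lambda=0$. The key input is the motivic stuffle relation $\zeta^\m(r,s)+\zeta^\m(s,r)=\zeta^\m(r)\zeta^\m(s)-\zeta^\m(r+s)$ coming from the regularised double shuffle relations \cite[Theorem 7.4]{G2}, which rearranges to $\zeta^{\m,1/2}(r,s)+\zeta^{\m,1/2}(s,r)=\zeta^\m(r)\zeta^\m(s)$ and therefore carries no $\zeta^\m(k)$-contribution. Symmetrising our sum against this identity---using $a_{s,r}(f)-a_{r,s}(f)=(-1)^{(s-1)/2}L_f^*(s)$ (from $P_f^+(y,x)=-P_f^+(x,y)$) together with $a_{k-1,1}(f)=0$---will force $\lambda=0$. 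I anticipate (b) to be the main obstacle: the clean collapse ultimately encodes the orthogonality of cusp forms to holomorphic Eisenstein series, and the virtue of the $\zeta^{1/2}$-formulation of Theorem \ref{main1} is precisely that it absorbs this Bernoulli-type identity into the very definition.
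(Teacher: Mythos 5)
Your overall strategy --- lift to motivic double zeta values, apply $\phi$, and split the vanishing into (a) the coefficients of $f_{m_1}f_{m_2}$ and (b) the coefficient of $f_k$ --- is the same as the paper's, and your part (a) is correct in substance: the identity $\sum_{r,s}a_{r,s}(f)\,e\tbinom{m_1,m_2}{r,s}=0$ is exactly the statement that $\phi\big(\sum a_{r,s}(f)\zeta^\m(r,s)\big)\in\C f_k$, which the paper obtains by quoting the motivic version of \cite[Theorem 3]{GKZ} rather than re-deriving it from the Eichler--Shimura relation as you propose; either route works.

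The genuine gap is in (b). Symmetrizing against the stuffle relation only disposes of the symmetric part of the sum: writing $\lambda=\sum_{r,s}a_{r,s}(f)\big(\tau(r,s)+\tfrac12\big)$, the first line of \eqref{eq:ds_eq} gives $\tau(r,s)+\tau(s,r)+1=\beta_r\beta_s/\beta_k=0$ for $r,s$ odd, so $\tau(r,s)+\tfrac12$ is already \emph{antisymmetric} on odd pairs; hence $\sum(a_{r,s}+a_{s,r})(\tau(r,s)+\tfrac12)=0$ trivially and you are left with $\lambda=\tfrac12\sum(a_{r,s}-a_{s,r})(\tau(r,s)+\tfrac12)$, which symmetrization cannot touch. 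This residue does not vanish for formal reasons: for the Eisenstein period polynomial $x^{k-2}-y^{k-2}$ one still has antisymmetry and $a_{k-1,1}=0$, with $a_{r,s}=1$ for all admissible $(r,s)$, yet already for $k=4$ the sum is $\zeta^{\frac12}(1,3)=\tfrac34\zeta(4)\neq0$; so no argument using only double shuffle together with $a_{k-1,1}(f)=0$ can close (b). The indispensable extra input is the Kohnen--Zagier relation \eqref{eq:KZ_relation} (Haberland's formula, i.e.\ orthogonality of $f$ to the Eisenstein series), which is precisely the linear condition on the periods $L_f^\ast(s)$ cutting out the image of $S_k$ inside the space of even period polynomials; the paper imports it in the form \eqref{eq:dep2_pr2} and combines it with the double-shuffle consequence \eqref{eq:dep2_pr1} to reach \eqref{eq:goal}. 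You correctly sense that Eisenstein orthogonality is lurking, but the $\zeta^{\frac12}$-normalization does not absorb it --- you must actually invoke \cite[Theorem 9]{KZ}. (A secondary slip: by \eqref{eq:formula_a} the number $a_{i,j}(f)$ depends only on $i$, so $a_{s,r}(f)-a_{r,s}(f)=\sum_{t}(-1)^{\frac{t-1}{2}}L_f^\ast(t)\big(\binom{s-1}{t-1}-\binom{r-1}{t-1}\big)$, not the single term you wrote.)
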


Since there is a generator $\{R_n \mid 1\le n\le k-1:{\rm odd}\}$ of the space $S_k$ such that $P_{R_n}^{+}\in \Q[x,y]$ (see \cite{KZ}), the relation in Theorem \ref{main1} can be over $\Q$.
The first example of even period polynomials is the one attached to the cusp form $\Delta=q\prod_{n=1}^{\infty} (1-q^n)^{24}$ of weight 12:
\begin{equation}\label{ex1}
c^{-1} P_\Delta^{+}(x,y)=\frac{36}{691} (x^{10} -y^{10}) - x^2y^2(x^2-y^2)^3, 
\end{equation}
where the constant $c$ is the coefficient of $x^2y^8$ in $P_\Delta^{+}(x,y)$.
For this, after multiplication by constant, Theorem \ref{main1} gives the relation of the form
\begin{equation*}\label{ex1_1}
\begin{aligned}
&22680\zeta^{\frac12}(1,11) +13006 \zeta^{\frac12} (3,9)-29145 \zeta^{\frac12}(5,7)\\
& -35364 \zeta^{\frac12}(7,5) +22680 \zeta^{\frac12}(9,3)=0.
\end{aligned}
\end{equation*}
We remark that the numbers $a_{i,j}(f)\in \C $ are written in the form
\begin{equation}\label{eq:formula_a} 
a_{i,j}(f) = \sum_{\substack{r+s=k\\r,s:{\rm odd}}} (-1)^{\frac{s-1}{2}} L_f^\ast(s) \binom{i-1}{s-1}, 
\end{equation}
and that $a_{k-1,1}(f)=P_f^+(1,1)=0$ holds for $f\in S_k$, because $P_f^+(x,y)+P_f^+(y,x)=0$ (which follows from the functional equation $L_f^\ast(s)=(-1)^{\frac{k}{2}} L_f^\ast (k-s)$).

\

We point out a difference from the result of Gangl, Kaneko and Zagier below.
Let $M_k$ denote the space of modular forms of weight $k$ for ${\rm SL}_2(\Z)$.
As a consequence of \cite[Theorem 3]{GKZ} (note that they use the opposite convention, i.e. their double zeta value $\zeta(r,s)$ equals our $\zeta(s,r)$), for $f\in M_k$ they obtained the relation of the form
\begin{equation}\label{eq:gkz} 
\sum_{\substack{r+s=k\\r,s:{\rm even}}} a_{r,s}(f) \zeta(r,s) = 3 \sum_{\substack{r+s=k\\r,s:{\rm odd}}} a_{r,s}(f) \zeta(r,s) + \sum_{r+s=k} (-1)^{r-1}a_{r,s}(f) \zeta(k),
\end{equation}
where the coefficient of $\zeta(k)$ in \eqref{eq:gkz} is described in the proof of \cite[Theorem 3]{GKZ}.
For the period polynomial of modular forms, we refer the reader to \cite{Z4}.
This shows that there is a linear relation corresponding to the Eisenstein series.
Our Theorem \ref{main1} does not apply to the Eisenstein series, but we emphasize that Theorem \ref{main1} gives a nontrivial simplification of the relation \eqref{eq:gkz} for the cusp form. 
Actually, since $a_{r,s}(f)=a_{s,r}(f)$ holds for $r,s$ even (see \cite[Theorem 3]{GKZ}), for a modular form $f\in M_k$ with rational periods we have the relation of the form
\begin{equation}\label{eq:gkz_ver}
\sum_{\substack{r+s=k\\r,s:{\rm odd}}} a_{r,s}(f) \zeta(r,s) \equiv 0 \pmod{\Q\zeta(k)},
\end{equation}
and the coefficient of $\zeta(k)$ in \eqref{eq:gkz_ver} is not as simple as all that. 
Theorem \ref{main1} provides a simpler formula for this, if $f$ is a cusp form.
We will see in the proof of Theorem \ref{main1} that the relation \eqref{eq:gkz} for $f\in S_k$ differs from Theorem \ref{main1} by the Kohnen-Zagier relation \cite[Theorem 9]{KZ}, an extra relation of $L_f^\ast(s) \ (1\le s\le k-1:{\rm odd})$.

\

On this occasion, let us recast an odd weight analogue of \eqref{eq:gkz} proved by the first author.
  
\begin{theorem}\label{thm:Ding-odd}
i) For $f\in S_k$, we define numbers $b_{i,j}(f)\in\C$ by
\[P_f^{-}(x+y,y) - \frac{x}{y} P_f^{-} (x+y,x)  = \sum_{i+j=k} \binom{k-1}{i-1} b_{i,j}(f) x^{i-1}y^{j-1}.\]
Then we have
\[ \sum_{\substack{r+s=k\\r,s\ge1:{\rm odd}}} b_{r,s}(f) \zeta^{\frac12} (r,s+1) =0 .\]
ii) For $f\in S_k$, we define numbers $c_{i,j}(f)\in\C$ by
\[\frac{d}{dx} P_f^{+}(x+y,y) - \frac{d}{dy} P_f^{+} (x+y,x)  = \sum_{i+j=k-1} \binom{k-3}{i-1} c_{i,j}(f) x^{i-1}y^{j-1}.\]
Then we have
\[ \sum_{\substack{r+s=k-1\\r\ge1:{\rm odd}\\s\ge2:{\rm even}}} c_{r,s}(f) \zeta^{\frac12} (r,s) =0 .\]
\end{theorem}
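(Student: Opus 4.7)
The plan is to follow the motivic strategy used for Theorem \ref{main1}. We lift each interpolated value to its motivic counterpart $\zeta^{\m,\frac12}(r,s) := \zeta^\m(r,s) + \tfrac12 \zeta^\m(r+s)$, apply the injective algebra-comodule homomorphism $\phi : \Zsp \to \mathcal{U}$ from Theorem \ref{thm:phi}, and verify that the image of the proposed linear combination is zero in $\mathcal{U}$. Because the period map \eqref{eq:per} is a ring homomorphism sending $\zeta^{\m,\frac12}$ to $\zeta^{\frac12}$, and $\phi$ is injective, establishing the motivic analogues of (i) and (ii) yields the theorem.

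For the computation of $\phi$, we invoke Proposition \ref{prop:formula_phi}(i). In both parts the total weight $N$ is odd ($N = k+1$ in (i) and $N = k-1$ in (ii)), so the sum there over $m_1 \ge 3$ odd, $m_2 \ge 2$, $m_1+m_2=N$ runs only over pairs with $m_1$ odd and $m_2$ even. The target identity therefore splits into two independent vanishing statements: the coefficient of each $f_{m_1}f_{m_2}$ and the coefficient of $f_N$ in $\phi$ of the combination must both vanish. For the latter we will use the explicit closed form of $\tau(n_1, n_2)$ for $N$ odd, recorded in the Remark that ends Section~2.

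For the coefficient of $f_{m_1}f_{m_2}$, the required vanishing is a linear identity among $L_f^\ast(s)$-values that should follow directly from the defining identity for $b_{r,s}(f)$ (resp.\ $c_{r,s}(f)$). Indeed, expanding $e\tbinom{m_1,m_2}{r,s+1} = \delta\tbinom{m_1,m_2}{r,s+1} + b^{m_1}_{r,s+1}$ and writing out $b^{m_1}_{r,s+1}$ as its two binomial summands, one recognizes the condition $\sum_{r,s} b_{r,s}(f)\, e\tbinom{m_1,m_2}{r,s+1}=0$ as the coefficient of $x^{m_1-1}y^{m_2-1}$ in $P_f^-(x+y,y) - \tfrac{x}{y}P_f^-(x+y,x)$ matched against $\sum\binom{k-1}{i-1}b_{i,j}(f)x^{i-1}y^{j-1}$, after expanding $(x+y)^{r-1}$ by the binomial theorem. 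Part (ii) runs in parallel, with $P_f^+$ and the partial derivatives $\partial_x, \partial_y$ replacing $P_f^-$ and the division by $y$; the normalization $\binom{k-3}{i-1}$ in the definition of $c_{i,j}(f)$ compensates precisely for the binomial factor produced by differentiation.

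The main obstacle will be the vanishing of the coefficient of $f_N$. Here the explicit formula for $\tau(r, s+1)$ contributes binomial-coefficient sums which would not cancel against $b_{r,s}(f)$ (resp.\ $c_{r,s}(f)$) on their own, but are absorbed once the $+\tfrac12\zeta^\m(r+s+1)$ (resp.\ $+\tfrac12\zeta^\m(r+s)$) shift from Yamamoto's interpolation is taken into account; it is precisely this cancellation that forces the use of $\zeta^{\frac12}$ rather than $\zeta$ in the statement. Verifying it amounts to specializing the defining polynomial identity for $b_{i,j}(f)$ (resp.\ $c_{i,j}(f)$) at an appropriate value such as $x=1, y=-1$, and invoking the Eichler--Shimura relations satisfied by $P_f^\pm$ to collapse the resulting combination of $L_f^\ast$-values. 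Managing the signs and the choice of specialization to achieve this cancellation is the delicate step; the remainder of the argument is bookkeeping.
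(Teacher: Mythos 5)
Your overall architecture --- lift to motivic values, apply $\phi$ from Theorem \ref{thm:phi}, and use that for odd total weight $N$ the set $\{f_{m_1}f_{m_2}\mid m_1:{\rm odd},\ m_2:{\rm even}\}\cup\{f_N\}$ is linearly independent in $\mathcal{U}_N$, so that the claimed relation splits into the vanishing of each $f_{m_1}f_{m_2}$-coefficient and of the $f_N$-coefficient --- is sound and parallels the paper's proof of Theorem \ref{main1}. Note, however, that the paper does not prove Theorem \ref{thm:Ding-odd} this way: it deduces it by combining \cite[Theorems 1 and 2]{M1} with \cite[Lemma 5.1]{M2} and omits the argument.

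The gap is that neither of your two vanishing statements is actually established, and the first is mischaracterized as trivial. The condition $\sum_{r,s} b_{r,s}(f)\, e\tbinom{m_1,m_2}{r,s+1}=0$ does \emph{not} follow ``directly from the defining identity for $b_{r,s}(f)$'': that identity merely names the coefficients of the transformed polynomial $P_f^-(x+y,y)-\tfrac{x}{y}P_f^-(x+y,x)$, and for a generic odd polynomial in place of $P_f^-$ the resulting vector would not annihilate the matrix $\big(e\tbinom{m_1,m_2}{r,s+1}\big)$. The vanishing uses in an essential way the Eichler--Shimura relations satisfied by $P_f^-$ (antisymmetry and the three-term relation); this is precisely the content of \cite[Theorems 1 and 2]{M1}, equivalently of Zagier's analysis of the matrix $\mathcal{B}_K$ in \cite[\S6]{Z3} --- compare the proof of Theorem \ref{thm:B^3}, where exactly these functional equations are what make the coefficient vectors of $\frac{dp}{dx_1}(x_2,x_1)$ and $x_1p(x_1,x_2)$ annihilate $B_k$. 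Likewise, for the $f_N$-coefficient you concede that the cancellation against $\tau(r,s+1)+\tfrac12$ is ``the delicate step'' and offer only a guessed specialization at $x=1$, $y=-1$; in the even-weight analogue (Theorem \ref{main1}) the corresponding step required a genuinely extra input, the Kohnen--Zagier relation \eqref{eq:KZ_relation}, and here the needed input is \cite[Lemma 5.1]{M2}. As it stands, the proposal correctly reduces the theorem to two identities among the critical $L$-values of $f$, but does not prove either of them.
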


Theorem \ref{thm:Ding-odd} is a $\zeta^\frac12$-version of \cite[Theorems 1 and 2]{M1}, which was motivated by Zagier's discovery of a connection between cusp forms and double zeta values of odd weight \cite[\S6]{Z3}.
Theorem \ref{thm:Ding-odd} follows from a combination of \cite[Theorems 1 and 2]{M1} and \cite[Lemma 5.1]{M2}, so we omit the proof.

Let us give an example of Theorem \ref{thm:Ding-odd}.
The odd period polynomial of $\Delta$ is, up to constant, given by
\begin{equation}\label{eq:odd-period-poly-wt12}
4x^9y -25 x^7y^3 + 42x^5y^5 -25 x^3y^7+4xy^9.
\end{equation}
By Theorem \ref{thm:Ding-odd} i) this leads to the relation 
\[ -12 \zeta^{\frac12}(3,10) -14 \zeta^{\frac12}(5,8)+5 \zeta^{\frac12}(7,6) + 18 \zeta^{\frac12}(9,4)=0. \]
From Theorem \ref{thm:Ding-odd} ii) for the polynomial \eqref{ex1}, we have 
\[ 14 \zeta^{\frac12}(3,8) +10 \zeta^{\frac12}(5,6) -21 \zeta^{\frac12}(7,4)=0.\]

\subsection{Proof of Theorem \ref{main1}}

In this subsection, we prove a motivic version of Theorem \ref{main1}.
Theorem \ref{main1} is then obtained by the period map \eqref{eq:per}.

\begin{proof}[Proof of Theorem \ref{main1}]
Since the identity \eqref{eq:gkz} can be shown by using the double shuffle relation \eqref{eq:ds} (see \cite[Theorem 3]{GKZ}), it follows from a result of Goncharov \cite[Theorem 7.4]{G2} that the motivic version of \eqref{eq:gkz} holds. 
The motivic version of the relation \eqref{eq:gkz_ver} (which is a consequence of the motivic version of \eqref{eq:gkz}) says that for a cusp form $f\in S_k$ the image of $\sum_{\substack{r+s=k\\r,s:{\rm odd}}} a_{r,s}(f) \zeta^\m (r,s)$ under the map $\phi$ equals $\sum_{\substack{r+s=k\\r,s:{\rm odd}}} a_{r,s}(f) \tau (r,s)f_k\in \C f_k$, where the map $\phi$ is extended by $\C$-linearly.
With this, applying $\phi$ to the left-hand side of \eqref{eq:main1} (replace $\zeta$ with $\zeta^\m$), we get
\begin{equation}\label{eq:im_phi} 
\phi \left( \sum_{\substack{r+s=k\\r,s:{\rm odd}}} a_{r,s}(f) \left(\zeta^\m (r,s)+\frac{1}{2} \zeta^\m(k)\right) \right) = \sum_{\substack{r+s=k\\r,s:{\rm odd}}} a_{r,s}(f) \left(\tau (r,s)+\frac{1}{2} \right)  f_k.
\end{equation}
Therefore we only need to show that the coefficient of $f_k$ in the right hand-side of \eqref{eq:im_phi} is zero.
This is done as follows.

Since $\zeta^\m(r,s)+\zeta^\m(s,r)=\left(\frac{\beta_r\beta_s}{\beta_{r+s}} -1\right)\zeta^\m(r+s)$ and the symmetry $a_{r,s}(f)=a_{s,r}(f)$ hold for $r,s\ge2$ even, the relation \eqref{eq:gkz} can be reduced to
\[ \sum_{\substack{r+s=k\\r,s:{\rm even}}} a_{r,s}(f) \left( \frac{\beta_r \beta_s}{\beta_k} + 1 \right) \zeta^\m(k) = 6\sum_{\substack{r+s=k\\r,s:{\rm odd}}} a_{r,s}(f) \left( \zeta^\m(r,s)+ \frac13 \zeta^\m(k)\right).\]
Applying $\phi$ and then comparing the coefficient of $f_k$, one gets the identity of the form
\begin{equation}\label{eq:dep2_pr1} \sum_{\substack{r+s=k\\r,s:{\rm even}}} a_{r,s}(f) \left( \frac{\beta_r \beta_s}{\beta_k} + 1 \right) = 6\sum_{\substack{r+s=k\\r,s:{\rm odd}}} a_{r,s}(f) \left(\tau(r,s)+ \frac13\right).
\end{equation}
We now reduce \eqref{eq:dep2_pr1} into the identity \eqref{eq:goal} below.
For this, we only need a result of Kohnen and Zagier \cite{KZ}.
For $r,s\ge1$ with $k=r+s$ even, let
\begin{align*}
\lambda(r,s)&=-\frac{1}{12} \left( 1 -(-1)^{s} \binom{k-1}{s-1}+(-1)^{s} \binom{k-1}{s} \right)-\frac{(-1)^{s}}{3\beta_k} \sum_{j=2}^k \binom{j-1}{s-1} \beta_j \beta_{k-j}.
\end{align*}
In particular, for $r,s\ge1$ odd, we have 
\begin{equation}\label{eq:lambda_odd} 
\lambda(r,s)=-\frac{1}{12} \left( 1+\binom{k-1}{s-1}- \binom{k-1}{s} \right)+\frac{1}{3\beta_k} \sum_{j=2}^k \binom{j-1}{s-1} \beta_j \beta_{k-j},
\end{equation}
which coincides with $-\frac{\lambda_{k/2,s-1}}{12 B_k}$, where $\lambda_{k,n}$ is defined in \cite[Theorem 9]{KZ}.
With this, as a consequence of Harberland's formula for the Petersson inner product between the Eisenstein series and cusp forms, Kohnen and Zagier \cite[Theorem 9 (ii)]{KZ} show that the relation
\begin{equation}\label{eq:KZ_relation}
 \sum_{\substack{r+s=k\\r,s:{\rm odd}}}(-1)^{\frac{s-1}{2}} \lambda(r,s) L_f^\ast(s) =0 
 \end{equation}
holds for any $f\in S_k$.
Using the functional equation $L_f^\ast(s)=(-1)^{\frac{k}{2}}L_f^\ast(k-s)$, one can reduce the Kohnen-Zagier relation to the form
\begin{equation}\label{eq:dep2_pr2} 
\sum_{\substack{r+s=k\\r,s:{\rm even}}} a_{r,s}(f) \left( \frac{\beta_r\beta_s}{\beta_k}+1\right) +  \sum_{\substack{r+s=k\\r,s:{\rm odd}}} a_{r,s}(f) =0 \quad (\forall f\in S_k).
\end{equation}
The details are left to Appendix B.
Combining \eqref{eq:dep2_pr1} and \eqref{eq:dep2_pr2} gives the relation
\begin{equation}\label{eq:goal} 
\sum_{\substack{r+s=k\\r,s:{\rm odd}}} a_{r,s}(f) \left(\tau (r,s)+\frac{1}{2} \right)=0,
\end{equation}
which completes the proof.
\end{proof}

Notice that the above proof works for any choice of the map $\phi$ for depth 2.
We emphasize that $\tau(r,s)$ in Proposition \ref{prop:tau_formula} has a close connection to $\lambda(r,s)$ in the Kohnen-Zagier relation.
In fact, we easily see that
\[ \lambda(r,s )=- \tau(r,s)-\frac{1}{2} + \frac{\beta_{r}\beta_{s}}{3\beta_{r+s}}.\]
This suggests a further connection between solutions to the double shuffle equation \eqref{eq:ds_eq} and extra relations of $L_f^\ast(odd)$'s which comes from the orthogonality between the Eisenstein series and $f$.
Another spin-off is that we obtain the anti-symmetry $\lambda(r,s)=-\lambda(s,r)$ for $r,s$ odd, first proved in \cite[Theorem 9 (i)]{KZ}, as a consequence of the first equality of \eqref{eq:ds_eq}.

\section{Triple zeta values and period polynomials}
\subsection{Statements of results}
In this subsection, we state our results on depth 3 and describe their consequences. 

For $j\in \{1,2,3\}$ we denote by $\mathbb{I}_k^{(j)}$ the set of $j$-th almost totally odd indices of weight $k$ and depth 3:
\[ \mathbb{I}_k^{(j)}=\{ (k_1,k_2,k_3)\in\Z_{\ge2}^3\mid k_1+k_2+k_3=k, k_j:{\rm even}, k_i:{\rm odd} \ i\neq j\}. \]
For example, $\mathbb{I}_{10}^{(3)}=\{(3,5,2),(3,3,4),(5,3,2)\}$.
Let $d_k$ be the number of elements in $\mathbb{I}_k^{(j)}$ (which does not depend on $j$).
For each $k$ even and $j\in\{1,2,3\}$, we define the $d_k\times d_k$ matrix $C_k^{(j)}$ by
\[ C_k^{(j)} = \left( c\tbinom{\bf m}{\bf n} \right)_{\begin{subarray}{c} {\bf m} \in \mathbb{I}_k^{(3)} \\ {\bf n} \in \mathbb{I}_k^{(j)} \end{subarray}}, \]
whose rows and columns are indexed by ${\bf m}$ and ${\bf n}$ in the sets $\mathbb{I}_k^{(3)}$ and $\mathbb{I}_k^{(j)}$, respectively.

The first result is about right annihilators of the square matrix $C_k^{(j)} $.

\begin{theorem}\label{thm:rankC}
Let $k$ be a positive even integer.
For $j\in\{1,2,3\}$, the relation
\[ \sum_{(k_1,k_2,k_3)\in \mathbb{I}_k^{(j)}} a_{k_1,k_2,k_3} \zeta^\m (k_1,k_2,k_3)\in \mathfrak{D}_2\mathcal{H}_k \]
holds if and only if the column vector $(a_{n_1,n_2,n_3})_{(n_1,n_2,n_3)\in \mathbb{I}_k^{(j)}}$ is a right annihilator of the matrix $C_k^{(j)}$.
Furthermore, we have
\begin{equation}\label{eq:rankC} 
{\rm rank}\, C_k^{(j)} = \dim_\Q  \langle \zeta^\m(k_1,k_2,k_3) \mod \mathfrak{D}_2\mathcal{H}_k\mid (k_1,k_2,k_3)\in\mathbb{I}_k^{(j)} \rangle_\Q.
\end{equation}
\end{theorem}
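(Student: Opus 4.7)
The plan is to apply the algebra-comodule isomorphism $\phi$ of Theorem~\ref{thm:phi} and read off the claimed coefficients modulo $\mathcal{U}_{N,2}$ via Proposition~\ref{prop:formula_phi} ii). For $k$ even, the conditions $m_1,m_2\ge 3$ odd, $m_3\ge 2$ and $m_1+m_2+m_3=k$ force $m_3$ to be even, so the sum in Proposition~\ref{prop:formula_phi} ii) is supported precisely on $\mathbf{m}\in\mathbb{I}_k^{(3)}$. Thus for each $\mathbf{n}\in\mathbb{I}_k^{(j)}$,
\[
\phi(\zeta^\m(\mathbf{n}))\ \equiv\ \sum_{\mathbf{m}\in\mathbb{I}_k^{(3)}} c\!\tbinom{\mathbf{m}}{\mathbf{n}}\,f_{m_1}f_{m_2}f_{m_3}\ \pmod{\mathcal{U}_{N,2}}.
\]
Since $f_{m_3}$ is a nonzero rational multiple of $f_2^{m_3/2}$, each $f_{m_1}f_{m_2}f_{m_3}$ is (up to a nonzero scalar) the basis monomial $f_{m_1}f_{m_2}\otimes f_2^{m_3/2}$ of $\mathcal{U}_k$; these are pairwise distinct basis elements that do not lie in $\mathcal{U}_{N,2}$, so the $d_k$ classes $\{\overline{f_{m_1}f_{m_2}f_{m_3}}\}_{\mathbf{m}\in\mathbb{I}_k^{(3)}}$ are $\Q$-linearly independent in $\mathcal{U}_k/\mathcal{U}_{N,2}$.

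Next, Proposition~\ref{prop:formula_phi} i) together with $\phi(\zeta^\m(k))=f_k\in\mathcal{U}_{N,2}$ shows $\phi(\depth_2\mathcal{H}_k)\subseteq \mathcal{U}_{N,2}$, so the projection $\mathcal{H}_k\xrightarrow{\phi}\mathcal{U}_k\twoheadrightarrow\mathcal{U}_k/\mathcal{U}_{N,2}$ annihilates $\depth_2\mathcal{H}_k$ and induces a $\Q$-linear map $\overline{\phi}\colon\mathcal{H}_k/\depth_2\mathcal{H}_k\to\mathcal{U}_k/\mathcal{U}_{N,2}$. By the first paragraph, $\overline{\phi}$ sends $\sum a_{\mathbf{n}} \zeta^\m(\mathbf{n})\bmod \depth_2\mathcal{H}_k$ to $\sum_{\mathbf{m}\in\mathbb{I}_k^{(3)}}\bigl(\sum_{\mathbf{n}} c\tbinom{\mathbf{m}}{\mathbf{n}} a_{\mathbf{n}}\bigr)\overline{f_{m_1}f_{m_2}f_{m_3}}$, which by the linear independence above vanishes iff $(a_{\mathbf{n}})$ right-annihilates $C_k^{(j)}$. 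This already gives the ``$\Rightarrow$'' direction of the equivalence; the ``$\Leftarrow$'' direction reduces to injectivity of $\overline{\phi}$ on the $\Q$-span of the classes $\overline{\zeta^\m(\mathbf{n})}$, $\mathbf{n}\in\mathbb{I}_k^{(j)}$, equivalently, to the statement that a depth-3 element of $\mathcal{H}_k$ whose $\phi$-image lies in $\mathcal{U}_{N,2}$ is already depth $\le 2$.

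Granted the equivalence, the rank identity \eqref{eq:rankC} follows from rank-nullity applied to $C_k^{(j)}\colon\Q^{d_k}\to\Q^{d_k}$: its right kernel has dimension $d_k-\rank C_k^{(j)}$ and coincides with the space of $(a_{\mathbf{n}})$ with $\sum a_{\mathbf{n}}\zeta^\m(\mathbf{n})\in\depth_2\mathcal{H}_k$, whose complementary dimension in $\Q^{d_k}$ is precisely the right-hand side of \eqref{eq:rankC}. The main obstacle is the injectivity required for the ``$\Leftarrow$'' direction: one must rule out the potential cusp-form gap between $\phi(\depth_2\mathcal{H}_k)$ and $\mathcal{U}_{N,2}$ being hit by almost-totally-odd depth-3 combinations. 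I would attack this either by establishing $\phi(\depth_2\mathcal{H}_k)=\mathcal{U}_{N,2}$ directly --- inverting the $e$-matrix from Proposition~\ref{prop:formula_phi} i) using the explicit $\tau$ of Proposition~\ref{prop:tau_formula} together with the motivic regularized double shuffle relations --- or, more robustly, by iterating the infinitesimal coaction $D_m$ and applying Lemma~\ref{lem:key} to separate depth-3 contributions at the level of $\mathcal{U}$.
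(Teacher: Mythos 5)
Your argument is complete and essentially the paper's for one direction only: the implication ``relation in $\mathfrak{D}_2\mathcal{H}_k$ $\Rightarrow$ right annihilator'' (the paper runs this through the iterated coaction $({\rm id}\otimes D_{m_2})\circ D_{m_1}$ and Proposition \ref{prop:D_m} rather than through $\phi$ and Proposition \ref{prop:formula_phi} ii), but the two are equivalent), and the rank identity does follow from the equivalence by rank--nullity exactly as you say. The genuine gap is the converse, which you explicitly leave open, and the first repair you propose cannot work: $\phi(\mathfrak{D}_2\mathcal{H}_k)=\mathcal{U}_{k,2}$ is \emph{false} whenever $S_k\neq 0$. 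The images $\phi(\zeta^\m(n_1,n_2))$ together with the shuffle products $f_{2a+1}\shuffle f_{2b+1}$ and $f_k$ span a subspace of $\mathcal{U}_{k,2}$ of codimension at least $\dim_\C S_k$ --- this is precisely the Baumard--Schneps/Gangl--Kaneko--Zagier cusp-form obstruction that Theorem \ref{main1} is about --- so already at $k=12$ there is a genuine gap between $\phi(\mathfrak{D}_2\mathcal{H}_k)$ and $\mathcal{U}_{k,2}$, and one must show that it is not hit by almost totally odd depth-3 combinations. Your second suggestion (iterating $D_m$) does not by itself identify the kernel of the iterated coactions with $\mathfrak{D}_2\mathcal{H}_k$, so it also leaves the converse unproved.

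The missing ingredient, which the paper uses, is the \emph{parity theorem in depth 3}. Since $\xi=\sum_{\bf n} a_{\bf n}\zeta^\m({\bf n})$ has even weight and depth $3$, one may write $\xi=\sum b_{k_1,k_2,k_3}\,\zeta^\m(k_1,k_2)\zeta^\m(k_3)+\xi'$ with $k_3\ge 2$ even and $\xi'\in\mathfrak{D}_2\mathcal{H}_k$. By \eqref{eq:dep2_phi}, $\phi(\xi-\xi')$ lies in the span of the monomials $f_{odd}f_{odd}f_{k_3}$ with $k_3\ge2$ even together with $f_k$, and this span meets $\mathcal{U}_{k,2}$ only in $\Q f_k$. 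The right-annihilator hypothesis gives $\phi(\xi)\in\mathcal{U}_{k,2}$ via \eqref{eq:dep3_phi}, and $\phi(\xi')\in\mathcal{U}_{k,2}$ via \eqref{eq:dep2_phi}; hence $\phi(\xi-\xi')\in\Q f_k$, so $\xi-\xi'\in\Q\zeta^\m(k)$ and $\xi\in\mathfrak{D}_2\mathcal{H}_k$. In other words, the potential cusp-form gap is avoided not by computing $\phi(\mathfrak{D}_2\mathcal{H}_k)$ exactly, but by observing that a depth-3 element of even weight lands, modulo $\mathfrak{D}_2\mathcal{H}_k$, in a subspace whose $\phi$-image is transverse to $\mathcal{U}_{k,2}$ modulo $\Q f_k$. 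Without this (or an equivalent) step, your proof establishes only one implication of the stated equivalence and hence only the inequality ${\rm rank}\,C_k^{(j)}\ge\dim_\Q\langle\zeta^\m(k_1,k_2,k_3)\bmod\mathfrak{D}_2\mathcal{H}_k\rangle_\Q$.
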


Hereafter, we call each elements $\zeta^\m(odd_{\ge3},odd_{\ge3},even_{\ge2})$, $\zeta^\m(odd_{\ge3},even_{\ge2},odd_{\ge3})$ and $\zeta^\m(even_{\ge2},odd_{\ge3},odd_{\ge3})$ the $j$-th almost totally odd motivic triple zeta values, where $j$ indicates the position of $even_{\ge2}$ in the indices.
Theorem \ref{thm:rankC} says that right annihilators of $C_k^{(j)}$ give all linear relations of the $j$-th almost totally odd motivic triple zeta values of weight $k$ modulo lower depths.

\

The second results are about various connections between left (not right!) annihilators of the square matrix $C_k^{(j)}$ and period polynomials.
These results have applications to giving upper bounds of the dimension of the $\Q$-vector space spanned by $j$-th totally odd (motivic) triple zeta values.
A similar study for totally odd multiple zeta values is examined by the second author in \cite{T}. 

Before stating the second results, we begin with conjectural formulas for the generating series of $\dim_\Q\ker C_k^{(j)}$, where the space $\ker C_k^{(j)}$ denotes the $\Q$-vector space of left annihilators of the matrix $C_k^{(j)}$.
We let 
\[ \odd(x) = \frac{x^3}{1-x^2}=x^3+x^5+\cdots,\ \even(x)=\frac{x^2}{1-x^2}=x^2+x^4+\cdots\]
and
\[ \cusp(x)=\frac{x^{12}}{(1-x^4)(1-x^6)}=\sum_{k>0} \dim_\C S_k x^k.\]
Note that $\sum_{k>0:{\rm even}}{\rm rank}\, C_k^{(j)}  x^k =  \odd(x)^2\even(x) - \sum_{k>0:{\rm even}}  \dim_\Q \ker C_k^{(j)} x^k $.

\begin{conjecture}\label{conj:dim_C^j}
We have
\begin{align*}
 \sum_{k>0:{\rm even}} \dim_\Q \ker C_k^{(1)}x^k &\stackrel{?}{=} \frac{1}{x^2} \cusp (x)\even(x) ,\\
 \sum_{k>0:{\rm even}} \dim_\Q \ker C_k^{(2)}x^k &\stackrel{?}{=} \cusp (x)\even(x) ,\\
 \sum_{k>0:{\rm even}} \dim_\Q \ker  C_k^{(3)} x^k& \stackrel{?}{=} \frac{1}{x^2}  \cusp (x)\even(x) + (x+\frac{1}{x} )  \cusp (x)\odd(x).
\end{align*}
\end{conjecture}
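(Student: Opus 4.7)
The plan is to translate the linear-algebra question about $\ker C_k^{(j)}$ into an explicit polynomial/modular problem, produce left annihilators from cusp-form period polynomials to obtain the lower bound, and attempt to match it with the upper bound via motivic input. Encoding a left annihilator $(\lambda_{\bf m})_{{\bf m}\in \mathbb{I}_k^{(3)}}$ as the polynomial
\[ \Lambda(x,y,z) = \sum_{(m_1,m_2,m_3)\in \mathbb{I}_k^{(3)}} \lambda_{m_1,m_2,m_3}\, x^{m_1-1} y^{m_2-1} z^{m_3-1}, \]
unwinding Definition \ref{def:c} converts the vanishing conditions $\sum_{\bf m} \lambda_{\bf m}\, c\tbinom{\bf m}{\bf n} = 0$ for ${\bf n}\in \mathbb{I}_k^{(j)}$ into bilinear identities among shifted binomial operators, whose structure is governed by the same $\SL_2(\Z)$-action that appears in Theorem \ref{main1} and the Kohnen--Zagier identity \eqref{eq:KZ_relation}. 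The three cases $j=1,2,3$ correspond to projecting onto the three possible positions of the even slot.

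For the lower bound I would exhibit explicit injections $\Psi^{(j)}$ built from period polynomials. Concretely, for each cusp form $f\in S_w$ and each even (resp.\ odd) monomial $m(x,y)$ of the appropriate bidegree, I would multiply $P_f^+$ (resp.\ $P_f^-$) by $m$ and insert the resulting polynomial into slot $j$ via a combinatorial shift of the kind that underlies Theorems \ref{thm:rest_even_C^j}, \ref{thm:B^3} and \ref{thm:B^3+E^3}. For $j=2$ this directly accounts for the factor $\cusp(x)\even(x)$, since pairs (cusp form of weight $w$, even polynomial of degree $k-w-2$) are exactly enumerated by that series. For $j=1,3$ the overall $1/x^2$ shift reflects the degree drop caused by placing the even index at the boundary, while the extra summand $(x+1/x)\cusp(x)\odd(x)$ in the $j=3$ case is produced by the parallel construction using $P_f^-$ paired with an odd monomial, in the spirit of Theorem \ref{thm:Ding-odd}. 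That the image lies in $\ker C_k^{(j)}$ reduces, after some bookkeeping, to the Eichler--Shimura relations for $P_f^\pm$; injectivity follows from the injectivity of $f \mapsto P_f^\pm$ on $S_w$.

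The main obstacle will be the matching upper bound $\dim_\Q \ker C_k^{(j)} \le [x^k](\text{RHS})$: one must show that \emph{every} relation among $j$-th almost totally odd motivic triple zeta values modulo depth $\le 2$ already arises from the cusp-form construction above. This is a depth-graded density statement whose depth-$2$ analogue (Gangl--Kaneko--Zagier and Baumard--Schneps) relies crucially on Brown's upper bound for $\dim \mathcal{H}_N$ through the isomorphism $\phi$ of Theorem \ref{thm:phi}. At depth $3$ the corresponding statement is essentially equivalent to the depth-graded piece of Brown's conjecture on the motivic Galois Lie algebra: ruling out exotic left annihilators would require showing that the depth-$3$ quotient of the free Lie algebra on generators $\sigma_{2i+1}$ in odd weight, modulo the Ihara bracket relations, has the codimension predicted by $\dim S_w$ in each weight.

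To make progress I would in parallel (i) verify the conjecture numerically for small even $k$ by directly computing the ranks of the $C_k^{(j)}$ from Definition \ref{def:c} and comparing against the coefficients of $\cusp(x)\even(x)$ and $\cusp(x)\odd(x)$, and (ii) prove the unconditional lower bound $\dim_\Q \ker C_k^{(j)} \ge [x^k](\text{RHS})$ by combining the injections $\Psi^{(j)}$ constructed above with Theorems \ref{thm:rest_even_C^j}--\ref{thm:B^3+E^3} of the paper. The reverse inequality would remain conditional on the depth-graded motivic Galois conjectures mentioned above, and I expect that converting it into an unconditional theorem will require an entirely new input from the depth-graded structure of motivic multiple zeta values that is not yet available.
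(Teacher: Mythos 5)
This statement is a \emph{conjecture} in the paper, not a theorem: the authors' only direct support for it is a Mathematica verification up to $k=40$, supplemented by the partial lower-bound results of Theorems \ref{thm:rest_even_C^j}, \ref{thm:B^3} and \ref{thm:B^3+E^3}. Your proposal correctly diagnoses this situation --- in particular you are right that the matching upper bound is equivalent to depth-graded motivic statements of Broadhurst--Kreimer type that are currently out of reach, and your plan of (i) numerical verification plus (ii) period-polynomial constructions of left annihilators is exactly the shape of what the paper actually delivers. In that sense your route is essentially the paper's.

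However, two of your claims about the provable part are too optimistic, and one attribution is inaccurate. First, the term $(x+\frac{1}{x})\cusp(x)\odd(x)$ in the $j=3$ case is \emph{not} produced solely by odd period polynomials: in Theorem \ref{thm:B^3} the space $\ker B_k^{(3)}$ receives contributions from both $Q_k^+$ (built from \emph{restricted even} period polynomials $W^{+,0}_{k-n+1}$, contributing $x\,\odd(x)\cusp(x)$) and $Q_k^-$ (built from odd period polynomials $W^-_{k-n-1}$, contributing $x^{-1}\odd(x)\cusp(x)$); the remaining piece $\frac{1}{x^2}\cusp(x)\even(x)$ comes from ${\rm Im}\,B_k^{(3)}\cap\ker E_k^{(3)}$ via the \emph{extended} restricted even period polynomials $\widehat{P}_k^+$ of Theorem \ref{thm:B^3+E^3}, and the injectivity of that map is left open in the paper, so even the lower bound for $j=3$ is not unconditional. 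Second, for $j=1$ the gap between Theorem \ref{thm:rest_even_C^j}'s contribution $\cusp(x)\even(x)$ and the conjectured $\frac{1}{x^2}\cusp(x)\even(x)$ is exactly $\cusp(x)$, and the paper only \emph{observes numerically} (Section 4.6) that these extra annihilators seem to arise from derivatives of restricted odd period polynomials; no construction is proved. So your asserted ``unconditional lower bound $\dim_\Q\ker C_k^{(j)}\ge [x^k](\text{RHS})$'' is in fact only established for $j=2$; for $j=1$ and $j=3$ it remains partly conjectural even at the level of constructing enough annihilators.
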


We have checked the above equalities by Mathematica up to $k=40$.

By the parity theorem, for $k$ even we see that the space $\mathfrak{D}_3\mathcal{H}_k$ is generated by elements $\zeta^\m (n_1,n_2)\zeta^\m(2n)   \ (n_1+n_2+2n=k, n_1,n_2\ge1,n\ge0)$.
It is known that $\sum_{k>0:{\rm even}} \dim\big(\mathfrak{D}_2\mathcal{H}_k\big/\mathfrak{D}_1\mathcal{H}_k \big)x^k = \odd(x)^2-\cusp(x)$ (see \cite[Theorem 2.4]{G1} and \cite[Proposition 18]{IKZ}).
Therefore we have
\[ \sum_{k>0:{\rm even}} \dim_\Q \big( \mathfrak{D}_3\mathcal{H}_k / \mathfrak{D}_2\mathcal{H}_k \big)x^k = \odd(x)^2\even(x)- \cusp (x)\even(x).\]
This implies that  
\begin{equation}\label{eq:triv_ineq} 
\sum_{k>0:{\rm even}} \dim_\Q \ker C_k^{(j)}x^k\ge \cusp (x)\even(x)=x^{14}+x^{16}+\cdots \quad (j\in \{1,2,3\}),
\end{equation}
where $\sum a_k x^k \ge \sum b_k x^k$ means $a_k\ge b_k$ for all $k$.
Conjecture \ref{conj:dim_C^j} implies that the spaces $\ker C_k^{(j)}$ for $j\in\{1,3\}$ will have more elements which are not obtained from the dimension formula for $\mathfrak{D}_2\mathcal{H}_n$.

\

Let us state the second results.
We begin with an alternative result to \eqref{eq:triv_ineq}.
Denote by $W_k^{+,0}$ the space of restricted even period polynomials defined as a subspace of $\Q[x_1,x_2]$ of homogeneous degree $k-2$ such that $p\in W_k^{+,0}$ satisfies $p(x_1,x_2)=p(-x_1,x_2)$ (even polynomial), $p(x_1,0)=0$ and
\[ p(x_1,x_2)-p(x_1+x_2,x_2)+p(x_1+x_2,x_1)=0.\]
Let $P_k^{+}$ be the $\Q$-vector space spanned by the polynomials
\[ p(x_1,x_2)x_3^{k-n-1} \quad (p(x_1,x_2)\in W^{+,0}_n ,\ 0< n < k  ). \]
It follows by definition that for $k$ even one has
\[P_k^{+}\cong \bigoplus_{\substack{1 < n< k\\ n:{\rm even}}} \big( W^{+,0}_n \otimes_\Q  \Q x^{k-n-1} \big). \]

\begin{theorem}\label{thm:rest_even_C^j}
For $j\in\{1,2,3\}$ and $k$ even, the map
\begin{align*}
P_k^{+} &\longrightarrow \ker C_k^{(j)}\\
\sum_{{\bf n}\in \mathbb{I}_k^{(3)}} a_{\bf n}{\bf x}^{\bf n}&\longmapsto  (a_{\bf n})_{{\bf n}\in \mathbb{I}_k^{(3)}}
\end{align*}
is well-defined and injective, where we write ${\bf x}^{\bf n}=x_1^{n_1-1}x_2^{n_2-1}x_3^{n_3-1}$ for ${\bf n} =(n_1,n_2,n_3)$.
Namely, the row vector obtained from coefficients of $p\in P_k^+$ gives a left annihilator of the matrix $C_k^{(j)}$.
\end{theorem}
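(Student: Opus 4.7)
I plan to establish injectivity first (easy), and then well-definedness (the substance).

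Injectivity follows from the decomposition $P_k^+ \cong \bigoplus_{0 < n < k,\, n \text{ even}} W_n^{+,0} \otimes_\Q \Q x_3^{k-n-1}$ recorded just before the theorem. If $q = \sum_n p_n(x_1, x_2)\, x_3^{k-n-1}$ maps to the zero vector, then for each $n$ the coefficient $[x_1^{m_1-1} x_2^{m_2-1}] p_n$ vanishes for all $m_1, m_2 \ge 3$ odd. The defining conditions of $W_n^{+,0}$ (evenness in $x_1$, $p_n(x_1, 0) = 0$, and $p_n(0, x_2) = 0$, obtained by setting $x_1 = 0$ in the period polynomial relation) force the support of $p_n$ to lie in $\{x_1^{2a} x_2^{2b} : a, b \ge 1\}$, so $p_n = 0$ for every $n$.

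For well-definedness, fix $p \in W_n^{+,0}$ and $\mathbf{n} = (n_1, n_2, n_3) \in \mathbb{I}_k^{(j)}$, and set $p_{a, b} = [x_1^a x_2^b] p$. The quantity to vanish is
\[
S_{\mathbf{n}} = \sum_{\substack{m_1 + m_2 = n \\ m_1, m_2 \ge 3 \text{ odd}}} p_{m_1-1,\, m_2-1}\, c\!\tbinom{m_1, m_2, k-n}{n_1, n_2, n_3}.
\]
Unpacking $c$ via Definition \ref{def:c} (using $\delta\tbinom{m_1}{k_1}$ to collapse one sum), then expanding $e\tbinom{m_2, k-n}{k_2, k_3} = \delta\tbinom{m_2, k-n}{k_2, k_3} + b_{k_2, k_3}^{m_2}$ together with the depth-$3$ formula \eqref{eq:def_e3}, splits $S_{\mathbf{n}}$ into six term-types. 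Two of them are proportional to $b_{n_2, n_3}^{n - n_1}$ with the $p$-factor indexed oppositely; summing the period relation $p(x_1, x_2) - p(x_1+x_2, x_2) + p(x_1+x_2, x_1) = 0$ with its image under $x_1 \leftrightarrow x_2$ yields the antisymmetry $p_{a, b} = -p_{b, a}$, which cancels this pair pointwise. Two further contributions combine (when $n_1 + n_2 = n$; otherwise they vanish from parity) into $\sum_{k_1} p_{k_1-1,\, n-k_1-1}\, e\tbinom{k_1, n-k_1}{n_1, n_2}$, which vanishes by the depth-$2$ period-polynomial identity of Baumard--Schneps \cite{BS} for $p \in W_n^{+,0}$.

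The remaining two contributions are genuinely depth-$3$, of shape $\sum_{m_1} p_{m_1-1,\, n-m_1-1}\, b_{\cdot, \cdot}^{m_1}\, b_{\cdot, \cdot}^{n-m_1}$. The plan is to use the closed form
\[
\sum_{n_1 + n_2 = M} b_{n_1, n_2}^{m}\, x^{n_1-1} y^{n_2-1} = (x-y)^{m-1}\bigl(x^{M-m-1} - y^{M-m-1}\bigr) \quad (m \text{ odd}),
\]
to convert each $b \cdot b$ sum into a coefficient reading of a polynomial built from $p$ and its transforms under $x_i \mapsto x_i + x_j$; the period polynomial relation then realizes the cancellation. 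The cases $j = 1, 2, 3$ proceed in parallel, differing only in the surviving $\delta$-selections. The main obstacle I anticipate is this last step: each $b_{\cdot, \cdot}^m$ already decomposes into two binomial summands, so each $b \cdot b$ expands into four terms and the two depth-$3$ contributions into eight; either a careful case-by-case verification or a cleaner single generating-function identity capturing both contributions simultaneously is needed, and achieving uniformity across $j \in \{1, 2, 3\}$ will demand some care.
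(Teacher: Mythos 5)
Your reduction is sound as far as it goes: expanding $c\tbinom{m_1,m_2,m_3}{n_1,n_2,n_3}$ into the six term-types, cancelling the pair $\delta\tbinom{m_2}{n_1}b_{n_2,n_3}^{m_1}$ against $\delta\tbinom{m_1}{n_1}b_{n_2,n_3}^{m_2}$ by the antisymmetry $p_{a,b}=-p_{b,a}$, and killing the pair $\delta\tbinom{\bf m}{\bf n}+\delta\tbinom{m_3}{n_3}b_{n_1,n_2}^{m_1}$ by the Baumard--Schneps depth-$2$ annihilation is all correct (and for $j=1,2$ those four terms vanish for parity reasons anyway). But the entire content of the theorem is then concentrated in the two remaining $b\cdot b$ contributions, which after simplification read
\begin{equation*}
\sum_{\substack{m_1+m_2=n\\ m_1,m_2\ge3:\ {\rm odd}}} p_{m_1-1,m_2-1}\Bigl( b_{n_1,n_2}^{m_1}\,b_{n_1+n_2-m_1,\,n_3}^{m_2} + b_{n_2,n_3}^{m_1}\,b_{n_1,\,n_2+n_3-m_1}^{m_2}\Bigr)=0,
\end{equation*}
and for this you offer only a plan (``the period polynomial relation then realizes the cancellation'') while explicitly flagging it as the anticipated obstacle. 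That is a genuine gap, not a routine verification: each $b$ splits into two binomials, the two $b$'s in each product carry different lower indices, and the generating function $(x-y)^{m-1}(x^{M-m-1}-y^{M-m-1})$ by itself does not tell you how the sum over $m_1$ interacts with the period relation for $p$. Nothing in the proposal establishes the identity.

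For comparison, the paper never confronts this sum for individual $\bf n$. Lemma \ref{7_1} proves the operator identity $(f|(1+\sigma_3))|(1+\sigma_1+\sigma_2)=(f|(1+\sigma_1))|(1+\sigma_2+\sigma_3+\sigma_4+\sigma_5)$ for $f$ even in $x_1,x_2$, and Corollary \ref{7_2} reads off from it the factorization
\begin{equation*}
C_k^{(j)}=\Bigl(e\tbinom{m_1,m_2}{n_1,n_2}\delta\tbinom{m_3}{n_3}\Bigr)\cdot\Bigl(h\tbinom{\bf m}{\bf n}\Bigr)={\rm diag}(C_{k-2},C_{k-4},\ldots,C_6)\cdot H,
\end{equation*}
so that any left annihilator of the block-diagonal depth-$2$ factor is automatically a left annihilator of $C_k^{(j)}$, and the theorem drops out of the isomorphism $W_{k-n}^{+,0}\cong\ker C_{k-n}$ with no further computation. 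If you want to complete your direct approach, the cleanest route is to prove exactly this factorization (equivalently, your missing $b\cdot b$ identity is the coefficient-level content of Lemma \ref{7_1}); otherwise you should import it. As stated, your argument proves the theorem only modulo the one identity that carries all the depth-$3$ difficulty.
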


Since $W_k^{+,0}\otimes \C \cong S_k$ (see e.g.~\cite[\S1.1]{BS}), we have $\sum \dim_\Q P_k^{+} x^k = \cusp(x)\even(x)$.
Thus, the inequality \eqref{eq:triv_ineq} is also obtained from Theorem \ref{thm:rest_even_C^j}.
It is remarkable that our proof of Theorem \ref{thm:rest_even_C^j} has an application to an explicit formula for the parity theorem for $j$-th almost totally odd motivic triple zeta values (see Theorem \ref{9_1}).


\

We now turn to a result for the case $j=3$.
By definition, we easily see that $C_k^{(3)} = B_k^{(3)} E_k^{(3)}$, where
\begin{align*}
B_k^{(3)} &= \left( \delta\tbinom{m_1}{n_1}e\tbinom{m_2,m_3}{n_2,n_3} \right)_{\begin{subarray}{c} (m_1,m_2,m_3)\in \mathbb{I}_k^{(3)}\\ (n_1,n_2,n_3)\in \mathbb{I}_k^{(3)}\end{subarray}},\\
E_k^{(3)} &= \left( e\tbinom{m_1,m_2,m_3}{n_1,n_2,n_3} \right)_{\begin{subarray}{c} (m_1,m_2,m_3)\in \mathbb{I}_k^{(3)}\\ (n_1,n_2,n_3)\in \mathbb{I}_k^{(3)} \end{subarray}}
\end{align*}
are the square matrices whose rows and columns are indexed by $(m_1,m_2,m_3)$ and $(n_1,n_2,n_3)$ in the set $ \mathbb{I}_k^{(3)} $, respectively.
Note that the square matrix whose rows and columns are indexed by ${\bf m}$ and ${\bf n}$ in the set $ \mathbb{I}_k^{(3)} $ is naturally viewed as a linear map on the $\Q$-vector space spanned by row vectors $(a_{\bf n})_{{\bf n}\in \mathbb{I}_k^{(3)}}$ indexed by $\mathbb{I}_k^{(3)}$ with $a_{\bf n}\in \Q$.
Then, by linear algebra, we have
\begin{equation}\label{eq:decomp_C^3}
\dim \ker C_k^{(3)} =\dim \ker B_k^{(3)} +\dim ({\rm Im}\, B_k^{(3)} \cap \ker E_k^{(3)} ).
\end{equation}
We now describe connections between period polynomials and row vectors in $\ker B_k^{(3)}$ and $ {\rm Im}\, B_k^{(3)} \cap \ker E_k^{(3)}$, separately.

The space $\ker B_k^{(3)}$ involves both odd and restricted even period polynomials.
Let $k$ be a positive even integer.
Define the space $W_k^-$ of the odd period polynomials as a subspace of $\Q[x_1,x_2]$ of homogeneous degree $k-2$ such that $p\in W_k^-$ satisfies $p(x_1,x_2)=-p(-x_1,x_2)$ (odd polynomial) and
\[ p(x_1,x_2)-p(x_1+x_2,x_2)-p(x_1+x_2,x_1)=0.\]
We define a subspace $Q_k^+ \subset \Q[x_1,x_2,x_3]$ (resp. $Q_k^- \subset \Q[x_1,x_2,x_3]$) as the $\Q$-vector space spanned by $x_1^{n-1}p(x_2,x_3)$ for $p(x_1,x_2)\in W^{+,0}_{k-n+1}$ and $1<n<k$ odd (resp. $x_1^{n-1}p(x_2,x_3)$ for $p(x_1,x_2)\in W^{-}_{k-n-1}$ and $1<n<k$ odd).
It follows for $k$ even that
\begin{align*}
&Q_k^{+} \cong\bigoplus_{\substack{1 < n< k\\ n:{\rm odd}}} \big( \Q x^{n-1} \otimes_\Q  W^{+,0}_{k-n+1}   \big),\\
&Q_k^- \cong\bigoplus_{\substack{1 < n< k\\ n:{\rm odd}}} \big(   \Q x^{n-1} \otimes_\Q W^{-}_{k-n-1}  \big)
\end{align*}
and, by $W_k^{-}\otimes \C \cong S_k$ (see e.g.~\cite[\S1.1]{KZ}), we have $\sum \dim_\Q Q_k^{\pm} x^k = \odd(x)\cusp(x)x^{\pm1}$.
For simplicity of notation, for any subsets $S_1,\ldots,S_r$ of $\Z$, we let
\begin{equation}\label{eq2_5}
\mathbb{I}_k(S_1\cdots S_r) = \{ (n_1,\ldots,n_r)\in S_1\times \cdots \times S_r\mid k=n_1+\cdots+n_r\}.
\end{equation}
We denote by $\mathbf{o}$ (resp. $\mathbf{e}$) the set of all odd integers $>1$ (resp. all even integers $>1$).
For example, it follows that $\mathbb{I}_k^{(3)}=\mathbb{I}_k(\mathbf{ooe}) $.

\begin{theorem}\label{thm:B^3}
For $k$ even the maps
\begin{equation*}
\begin{minipage}{.45\textwidth}
\begin{align*}
Q_k^{+} &\longrightarrow \ker B_k^{(3)}\\
\sum_{{\bf n}\in \mathbb{I}_{k+1}(\mathbf{ooo})} a_{\bf n}{\bf x}^{\bf n}&\longmapsto  (a_{\bf n}^+)_{{\bf n}\in \mathbb{I}_k^{(3)}}
\end{align*}
\end{minipage}
and
\begin{minipage}{.45\textwidth}
\begin{align*}
Q_k^{-} &\longrightarrow \ker B_k^{(3)}\\
\sum_{{\bf n}\in \mathbb{I}_{k-1}(\mathbf{oee})} a_{\bf n}{\bf x}^{\bf n}&\longmapsto  (a_{\bf n}^-)_{{\bf n}\in \mathbb{I}_k^{(3)}}
\end{align*}
\end{minipage}
\end{equation*}
are well-defined and injective, where $a_{\bf n}^+=n_3 a_{n_1,n_3+1,n_2}$ and $a_{\bf n}^-=a_{n_1,n_2-1,n_3}$ for ${\bf n}=(n_1,n_2,n_3)$.
Furthermore, their combined map $Q_k^+\oplus Q_k^- \rightarrow \ker B_k^{(3)}$ is an injection.
\end{theorem}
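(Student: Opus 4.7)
The plan is to decompose the problem along the first index $n_1$, reduce each block to a depth-two coefficient identity that follows from period-polynomial functional equations, and handle combined injectivity via a mixed-partials argument. Because of the factor $\delta\tbinom{m_1}{n_1}$, $B_k^{(3)}$ is block-diagonal (after reordering) with blocks $E_K^{(\mathbf{oe})}=(e\tbinom{m_2,m_3}{n_2,n_3})_{(m_2,m_3),(n_2,n_3)\in\mathbb{I}_K(\mathbf{oe})}$ indexed by odd $n_1\in\{3,\ldots,k-3\}$ (write $K=k-n_1$). Since $Q_k^{\pm}$ split correspondingly along $n_1$, the task reduces, for each such $n_1$, to verifying that $p\in W_{K+1}^{+,0}$ and $q\in W_{K-1}^{-}$ each produce a left annihilator of $E_K^{(\mathbf{oe})}$ via $(m_3 c_{m_3+1,m_2})_{(m_2,m_3)}$ and $(d_{m_2-1,m_3})_{(m_2,m_3)}$ respectively, and that their two contributions together realize $W_{K+1}^{+,0}\oplus W_{K-1}^{-}$ inside the left kernel.

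Substituting $e\tbinom{m_2,m_3}{n_2,n_3}=\delta\tbinom{m_2,m_3}{n_2,n_3}-\binom{m_2-1}{n_2-1}-\binom{m_2-1}{n_3-1}$ (valid under our parities), the ``$-$'' left-annihilator condition becomes
\[
d_{n_2-1,n_3}=\sum_{a+b=K-1}d_{a,b}\Bigl[\tbinom{a}{n_2-1}+\tbinom{a}{n_3-1}\Bigr],
\]
which will follow from the odd period identity $q(x,y)=q(x+y,y)+q(x+y,x)$ by splitting $\tbinom{a}{n}=\tbinom{a-1}{n-1}+\tbinom{a-1}{n}$ via Pascal and invoking the period identity at $(I,J)=(n_2-2,n_3-1)$ and $(n_2-1,n_3-2)$; the latter contributes zero since $d_{n_2,n_3-1}=0$ by the parity support of $W_{K-1}^{-}$. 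The analogous ``$+$'' identity,
\[
n_3 c_{n_3+1,n_2}=\sum_{a+b=K+1}(a-1)c_{a,b}\Bigl[\tbinom{b-1}{n_2-1}+\tbinom{b-1}{n_3-1}\Bigr],
\]
is obtained as follows: first, symmetrize the three-term relation of $W_{K+1}^{+,0}$ in $I\leftrightarrow J$ to derive the antisymmetry $c_{a,b}=-c_{b,a}$ (and hence evenness of $p$ in both variables); then differentiate the three-term relation with respect to $x$ and compare the coefficients at $x^{n_3-1}y^{n_2-1}$ (which produces $n_3 c_{n_3+1,n_2}$) and at $x^{n_2-1}y^{n_3-1}$ (whose leading term $n_2 c_{n_2+1,n_3}$ vanishes by parity); combine the two extractions via antisymmetry to obtain the desired formula. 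Individual injectivity of each of the two maps is then immediate, since dividing by $m_3\ne 0$ recovers $c_{m_3+1,m_2}$ and $d_{m_2-1,m_3}$ is read off directly.

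For the combined injectivity, introduce the generating polynomials $F^{+}(x,y)=\sum m_3 c_{m_3+1,m_2}x^{m_2-1}y^{m_3-1}$ and $F^{-}(x,y)=\sum d_{m_2-1,m_3}x^{m_2-1}y^{m_3-1}$; a short calculation using the antisymmetry of $p$ gives $F^{+}=-\partial_{2}p$, and directly $F^{-}=xq$. Assuming $F^{+}+F^{-}=0$, i.e.\ $\partial_{2}p=xq$, swap $x\leftrightarrow y$ and use antisymmetry of $p$ (so $\partial_{2}p(y,x)=-\partial_{1}p(x,y)$) together with symmetry of $q$ (which follows by the same $I\leftrightarrow J$ symmetrization applied to the odd three-term relation) to obtain $\partial_{1}p=-yq$. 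Equating the mixed partials $\partial_{1}\partial_{2}p=q+x\partial_{1}q$ and $\partial_{2}\partial_{1}p=-q-y\partial_{2}q$ gives $2q+x\partial_{1}q+y\partial_{2}q=0$, and Euler's relation $x\partial_{1}q+y\partial_{2}q=(K-3)q$ for the homogeneous polynomial $q$ of degree $K-3$ reduces this to $(K-1)q=0$. Hence $q=0$ (as $K\ge 3$), and then $\partial_{2}p=0$ combined with $p(x_1,0)=0$ forces $p=0$.

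The main obstacle will be the ``$+$'' case of the coefficient identity: unlike the ``$-$'' case, which follows from two direct applications of the period relation, the ``$+$'' case requires first differentiating the three-term relation and then combining two coefficient extractions via antisymmetry; the signs and the parity vanishing of $c_{n_2+1,n_3}$ must line up correctly. The remaining steps are either immediate or follow from routine calculus.
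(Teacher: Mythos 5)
Your proposal is correct, and its skeleton is the same as the paper's: both exploit the factor $\delta\tbinom{m_1}{n_1}$ to write $B_k^{(3)}$ as a block-diagonal matrix with depth-two blocks $B_K=\big(e\tbinom{m_2,m_3}{n_2,n_3}\big)$ indexed by $\mathbb{I}_K(\mathbf{oe})$, and then reduce everything to statements about the left kernel of each block. The differences are in how the two remaining steps are discharged. For well-definedness, the paper translates membership in $\ker B_K$ into the polynomial condition $q(x_1,x_2)-q(x_2-x_1,x_2)+q(x_2-x_1,x_1)=(\mbox{odd in }x_1)$ and then cites Zagier's analysis of the matrix $\mathcal{B}_K$; you instead verify the two coefficient identities directly from the three-term relations (your Pascal splitting for the ``$-$'' case, and for the ``$+$'' case the addition of the two extractions at $(n_3-1,n_2-1)$ and $(n_2-1,n_3-1)$, which cancels the $\binom{a-2}{\cdot}$ terms, kills the left side of the second by parity, and leaves exactly the desired identity after applying antisymmetry -- I checked this and it closes). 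This makes the proof self-contained at the cost of some computation. For the combined injectivity, the paper only asserts that the two images ``have incompatible symmetry properties,'' whereas you give an actual argument: $F^+=-\partial_2 p$, $F^-=x_1q$, the swap $x_1\leftrightarrow x_2$ combined with antisymmetry of $p$ and symmetry of $q$ (both of which do follow from the respective three-term relations, as you note), equality of mixed partials, and Euler's relation force $q=0$ and then $p=0$. That step is, if anything, more convincing than the paper's one-line claim. One minor slip: antisymmetry of $p$ does not ``hence'' give evenness in both variables -- evenness in $x_1$ is part of the definition of $W^{+,0}$ and evenness in $x_2$ follows from homogeneity of even degree -- but nothing in your argument depends on the faulty implication.
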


For elements of $ {\rm Im}\, B_k^{(3)} \cap \ker E_k^{(3)}$, we define the subspace $\widehat{P}_k^+\subset x_3^{-1}\Q[x_1,x_2,x_3]$ as the $\Q$-vector space spanned by $p(x_1,x_2)x_3^{k-n-1}$ for $p(x_1,x_2)\in  W_n^{+,0}$ and $0<n\le k$ even.
For $k$ even we have
\[ \widehat{P}_k^{+} \cong\bigoplus_{\substack{0 < n\le k\\ n:{\rm even}}} \big( W^{+,0}_n \otimes_\Q  \Q x^{k-n-1} \big)\]
and hence, $\sum \dim_\Q \widehat{ P}_k^{+} x^k = \cusp(x)\even(x)x^{-2}$.

\begin{theorem}\label{thm:B^3+E^3}
There is a well-defined linear map from $\widehat{P}_k^+$ to $ {\rm Im}\, B_k^{(3)} \cap \ker E_k^{(3)}$.
\end{theorem}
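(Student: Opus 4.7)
The plan is to define the map $\Phi : \widehat{P}_k^+ \to {\rm Im}\, B_k^{(3)} \cap \ker E_k^{(3)}$ by treating separately the two direct summands in
\[
\widehat{P}_k^+ = P_k^+ \,\oplus\, \bigl(W_k^{+,0}\otimes \Q\, x_3^{-1}\bigr),
\]
and then combining them linearly. The essential algebraic input throughout is the factorization $C_k^{(3)} = B_k^{(3)} E_k^{(3)}$, which implies that for any row vector $v$,
$v \in \ker C_k^{(3)}$ is equivalent to $v B_k^{(3)} \in \ker E_k^{(3)}$, so that $v B_k^{(3)}$ then lies automatically in ${\rm Im}\, B_k^{(3)} \cap \ker E_k^{(3)}$.

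For the interior summand $P_k^+$, Theorem~\ref{thm:rest_even_C^j} already supplies, for each $p \in P_k^+$, a coefficient row vector $v^{(p)} \in \ker C_k^{(3)}$ obtained by direct extraction. Thus the assignment $p \mapsto v^{(p)} B_k^{(3)}$ defines $\Phi$ on $P_k^+$ with essentially no extra work. All that is needed is to note the linearity and to check that the image does land in ${\rm Im}\, B_k^{(3)} \cap \ker E_k^{(3)}$, which is immediate from the factorization.

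The substance of the theorem is in the boundary summand $W_k^{+,0}\otimes \Q\, x_3^{-1}$, where naive coefficient extraction is unavailable because of the formal $x_3^{-1}$. The plan here is to build, from each $p \in W_k^{+,0}$, a row vector $u^{(p)} \in \Q^{\mathbb{I}_k^{(3)}}$ that plays the role of ``the coefficients of $p(x_1,x_2)/x_3$ legally redistributed over $\mathbb{I}_k^{(3)}$''. The mechanism for the redistribution is the period relation
\[
p(x_1, x_2) - p(x_1+x_2, x_2) + p(x_1+x_2, x_1) = 0
\]
together with the parity $p(-x_1,x_2)=p(x_1,x_2)$ and the boundary $p(x_1,0)=0$: applied to the formal object $p(x_1,x_2)/x_3$, the period relation lets one rewrite this as a polynomial in three variables (with positive exponents), whose coefficients then give the desired $u^{(p)}$. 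Setting $w^{(p)} := u^{(p)} B_k^{(3)}$, the vector $w^{(p)}$ is in ${\rm Im}\, B_k^{(3)}$ by construction, and the choice of $u^{(p)}$ will be arranged so that $u^{(p)} \in \ker C_k^{(3)}$, forcing $w^{(p)} \in \ker E_k^{(3)}$.

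The main obstacle is pinning down the correct polynomial transform defining $u^{(p)}$ in the boundary case; once it is written down, linearity is automatic, and the required identity $u^{(p)} B_k^{(3)} E_k^{(3)} = 0$ will reduce, after expanding the three-variable kernel $e\tbinom{m_1,m_2,m_3}{n_1,n_2,n_3}$ via \eqref{eq:def_e3} and regrouping terms by which of the three ``$\delta$ + $b$'' pieces acts, to a three-variable polynomial identity equivalent to the defining axioms of $W_k^{+,0}$. Well-definedness of $\Phi$ on $\widehat{P}_k^+$ then follows from the direct sum decomposition and the linearity of both $p \mapsto v^{(p)}$ and $p \mapsto u^{(p)}$; any ambiguity in the choice of the preimage $u^{(p)}$ is absorbed into $\ker B_k^{(3)}$, which contributes trivially to $u^{(p)} B_k^{(3)}$.
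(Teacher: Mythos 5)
Your treatment of the interior summand $P_k^+$ is fine and, via the factorization $C_k^{(3)}=B_k^{(3)}E_k^{(3)}$, essentially reproduces what the paper does there: any $v\in\ker C_k^{(3)}$ gives $vB_k^{(3)}\in {\rm Im}\,B_k^{(3)}\cap\ker E_k^{(3)}$, and Theorem \ref{thm:rest_even_C^j} supplies such $v$ from $P_k^+$. The gap is in the boundary summand $W_k^{+,0}\otimes\Q x_3^{-1}$, which is the entire point of passing from $P_k^+$ to $\widehat{P}_k^+$ (it accounts for the difference between $\cusp(x)\even(x)$ and $x^{-2}\cusp(x)\even(x)$). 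You never actually construct the vector $u^{(p)}$, and the mechanism you propose cannot work as stated: the period relation and the parity of $p\in W_k^{+,0}$ are identities in $x_1,x_2$ alone and give no way to trade the formal factor $x_3^{-1}$ for monomials with $n_3\ge 2$; there is no a priori reason that $p(x_1,x_2)/x_3$ admits a ``redistributed'' representative whose coefficient vector lies in $\ker C_k^{(3)}$, and producing such a representative is exactly the hard content of the theorem.

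The paper's route (made precise in Theorem \ref{8_4}) is genuinely different at this point. It enlarges the index set to allow $n_3=0$, applies the operator $L_k=\widehat{E}_k^{(3)}-{\rm id}$ to $\rho(p)$ for $p\in\widehat{P}_k^+$, proves $L_k(\rho(p))\in\ker\widehat{E}_k^{(3)}$ by the polynomial identity \eqref{eq8_3} (a consequence of \eqref{eq8_1} and \eqref{eq8_2}), and shows $L_k(\rho(p))=-\widehat{B}_k^{(3)}(\rho(p))$. The step your proposal is missing is then Lemma \ref{8_5}: the extended period polynomials of Eisenstein series (Zagier's $\widehat{G}_k$) furnish left annihilators of the extended matrices $\widehat{B}_{k-n}$ with nonvanishing entry at the boundary index $(k-n,0)$, and only these allow one to correct $\rho(p)$ by elements of $\ker\widehat{B}_k^{(3)}$ so that its image under $\widehat{B}_k^{(3)}$ is visibly of the form $i_0\big(B_k^{(3)}(v')\big)$ with $v'$ supported on $\mathbb{I}_k^{(3)}$. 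In other words, the ``legal redistribution'' you invoke is powered not by the relations satisfied by $p$ itself but by a separate Eisenstein-series input; without it the boundary half of your map is undefined.
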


Unlike the cases of Theorems \ref{thm:rest_even_C^j} and \ref{thm:B^3}, we were not able to prove that the well-defined map in Theorem \ref{thm:B^3+E^3} is injective.
Assuming the injectivity, from \eqref{eq:decomp_C^3} and Theorems \ref{thm:B^3} and \ref{thm:B^3+E^3}, we obtain the following inequality: 
\[ \sum_{k>0:{\rm even}} \dim_\Q \ker  C_k^{(3)} x^k \ge \frac{1}{x^2}  \cusp (x)\even(x) + (x+\frac{1}{x} )  \cusp (x)\odd(x)=x^{12}+2x^{14}+\cdots.\]
The result suggests that linear relations among $\zeta(odd_{\ge3},odd_{\ge3},even_{\ge2})$'s modulo lower depths may be related to cusp forms in three ways.

According to Conjecture \ref{conj:dim_C^j}, we should have further elements in $\ker C_k^{(1)}$ which does not come from Theorem \ref{thm:rest_even_C^j}.
For this, we observe a conjectural relation with a derivative of an odd period polynomial (see Section 4.6 below).
This could be an interesting phenomena since this is the first appearance of the derivative of odd period polynomials in this study.

\subsection{Proof of Theorem \ref{thm:rankC}}

In this subsection, we prove Theorem \ref{thm:rankC}.

\begin{proof}[Proof of Theorem \ref{thm:rankC}]
Let $(a_{\bf n})_{{\bf n}\in \mathbb{I}_k^{(j)}}$ be a right annihilator of the matrix $ C_k^{(j)}$ (a column vector).
Set
\[\xi=\sum_{{\bf n}\in \mathbb{I}_k^{(j)}} a_{\bf n} \zeta^\m ({\bf n}).\]
Since the element $\xi$ is of weight $k$ even and depth 3, by the parity theorem there are $b_{k_1,k_2,k_3}\in\Q$ and $\xi' \in \depth_2 \Zsp_k$ such that
\begin{equation*}\label{eq6_2}
\xi=\sum_{\substack{k_1+k_2+k_3=k\\k_3\ge2:{\rm even}}}b_{k_1,k_2,k_3} \zeta^\m (k_1,k_2) \zeta^\m (k_3) + \xi'.
\end{equation*}
From \eqref{eq:dep2_phi} we see that for $k_3$ even the element 
\begin{equation}\label{eq:image_phi}
\phi\bigg( \sum_{k_1+k_2=k-k_3} b_{k_1,k_2,k_3} \zeta^\m (k_1,k_2) \zeta^\m (k_3)\bigg)  \quad \big(= \phi(\xi-\xi')\big)
\end{equation}
lies in the $\Q$-vector space spanned by $f_{odd}f_{odd}f_{k_3}$'s and $f_k$.
On the other hand, since the column vector $(a_{\bf n})_{{\bf n}\in \mathbb{I}_k^{(j)}}$ is a right annihilator of $ C_k^{(j)}$, it follows from \eqref{eq:dep3_phi} that $\phi (\xi)\in \mathcal{U}_{k,2}$.
We also have $\phi (\xi')\in \mathcal{U}_{k,2}$ from \eqref{eq:dep2_phi}, and hence, $\phi (\xi-\xi')\in \mathcal{U}_{k,2}$.
Thus, the element \eqref{eq:image_phi} lies in the intersection of $\mathcal{U}_{k,2}$ and the space spanned by $f_{odd}f_{odd}f_{k_3}$'s and $f_k$, which is $\Q f_k$.
Thus, $\xi-\xi'\in \Q\zeta^\m(k)$, which implies $\xi\in \depth_2 \Zsp_k$.

Conversely, suppose that for $a_{\bf n}\in\Q $ we have
\[\xi=\sum_{{\bf n}\in \mathbb{I}_k^{(j)}} a_{\bf n} \zeta^\m ({\bf n})  \in \depth_2 \Zsp_k.\]
We see that $\big({\rm id}\otimes D_{m_2}\big) \circ D_{m_1}  (\zeta^\m(n,k-n))=0$ holds for all $(m_1,m_2,m_3)\in \mathbb{I}_k^{(3)}$ and $1\le n\le k-2$, which shows $\big({\rm id}\otimes D_{m_2}\big) \circ D_{m_1}(\xi)=0$.
On the other hand, using Proposition \ref{prop:D_m} and Definition \ref{def:c}, we get 
\[ \big({\rm id}\otimes D_{m_2}\big) \circ D_{m_1}  (\zeta^\m(n_1,n_2,n_3)) = c\tbinom{m_1,m_2,m_3}{n_1,n_2,n_3}\xi_{m_1}\otimes \xi_{m_2}\otimes \zeta^\m(m_3) .\]
Since the set $\{\xi_{m_1}\otimes \xi_{m_2}\otimes \zeta^\m(m_3) \mid (m_1,m_2,m_3)\in \mathbb{I}_k^{(3)}\}$ is linearly independent over $\Q$, the identity $\big({\rm id}\otimes D_{m_2}\big) \circ D_{m_1}(\xi)=0$ implies that the column vector $(a_{\bf n})_{{\bf n}\in \mathbb{I}_k^{(3)}}$ is a right annihilator of  the matrix $C_k^{(j)}$. 
We complete the proof.
\end{proof}

\subsection{Proof of Theorem \ref{thm:rest_even_C^j}}

In this subsection, we first give another expression of the integer $c\tbinom{m_1,m_2,m_3}{n_1,n_2,n_3}$, and then, prove Theorem \ref{thm:rest_even_C^j}.
We also show an explicit formula for the parity theorem of depth 3 modulo lower depths.

For a rational function $ f(x_1,x_2,x_3)\in\Q(x_1,x_2,x_3)$, define the change of variables $\sigma_{ij}$ for $1\le i\le 5,1\le j \le2$ by
\begin{equation*}
\begin{aligned}
& f(x_1,x_2,x_3) \big| \sigma_{11}=f(x_2-x_1,x_1,x_3),\, f(x_1,x_2,x_3) \big| \sigma_{12}=  f(x_2-x_1,x_2,x_3),\\
& f(x_1,x_2,x_3) \big| \sigma_{21}=f(x_3-x_2,x_1,x_2),\, f(x_1,x_2,x_3) \big| \sigma_{22} =f(x_3-x_2,x_1,x_3),\\
 & f(x_1,x_2,x_3) \big| \sigma_{31}=f(x_1,x_3-x_2,x_2),\, f(x_1,x_2,x_3) \big| \sigma_{32}= f(x_1,x_3-x_2,x_3),\\
& f(x_1,x_2,x_3) \big| \sigma_{41}=f(x_2-x_1,x_3-x_1,x_1),\, f(x_1,x_2,x_3) \big| \sigma_{42} = f(x_1-x_2,x_3-x_2,x_2),\\
&  f(x_1,x_2,x_3) \big| \sigma_{51}=f(x_3-x_2,x_3-x_1,x_3),\, f(x_1,x_2,x_3) \big| \sigma_{52}= f(x_2-x_3,x_2-x_1,x_2).
 \end{aligned}
 \end{equation*}
For $a,b\in \Z$ we write $f\big| (a\sigma_{ij}+b\sigma_{kl})=af\big| \sigma_{ij}+bf\big|\sigma_{kl}$ and set
\begin{align}\label{eq7_1}
f\big|\sigma_i:=f\big|(\sigma_{i1}-\sigma_{i2}) \quad (1\le i\le 5).
\end{align}

\begin{lemma}\label{7_1}
For any rational function $f(x_1,x_2,x_3)\in\Q(x_1,x_2,x_3)$ satisfying $f(\pm x_1,\pm x_2,x_3)=f(x_1,x_2,x_3)$, we have
\begin{equation}\label{eq7_2}
(f\big|(1+\sigma_3))\big| (1+\sigma_1+\sigma_2) = (f\big| (1+\sigma_1)) \big| (1+\sigma_2+\sigma_3+\sigma_4+\sigma_5),
\end{equation}
where $f\big|(1+\sigma_i)$ means $f+f\big| \sigma_i$.
\end{lemma}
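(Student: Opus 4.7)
The approach is a direct computational verification of~\eqref{eq7_2}: expand both sides into signed sums of evaluations $f(A,B,C)$, normalize the arguments via the evenness hypothesis, and match terms.

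First, I would use $f\big|(1+\tau)=f+f\big|\tau$ together with linearity of each $\sigma_i$ to expand both sides into their constituent pieces. Writing $f\big|\sigma_i\sigma_j:=(f\big|\sigma_i)\big|\sigma_j$, the LHS expands to
\[
f + f\big|\sigma_1 + f\big|\sigma_2 + f\big|\sigma_3 + f\big|\sigma_3\sigma_1 + f\big|\sigma_3\sigma_2,
\]
while the RHS expands to
\[
f + f\big|\sigma_1 + f\big|\sigma_2 + f\big|\sigma_3 + f\big|\sigma_4 + f\big|\sigma_5 + f\big|\sigma_1\sigma_2 + f\big|\sigma_1\sigma_3 + f\big|\sigma_1\sigma_4 + f\big|\sigma_1\sigma_5.
\]
Four summands are common to both expansions, so the identity reduces to
\[
f\big|\sigma_3\sigma_1 + f\big|\sigma_3\sigma_2 \;=\; f\big|\sigma_4 + f\big|\sigma_5 + f\big|\sigma_1\sigma_2 + f\big|\sigma_1\sigma_3 + f\big|\sigma_1\sigma_4 + f\big|\sigma_1\sigma_5.
\]

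Second, each composite $f\big|\sigma_i\sigma_j$ is evaluated by substituting the output of the inner $\sigma_j$ into the definition of $\sigma_i$. Each such composite is a signed sum of four evaluations $\pm f(A,B,C)$, and the two single-$\sigma$ terms $f\big|\sigma_4$, $f\big|\sigma_5$ contribute two each. This gives $8$ evaluations on the left and $22$ on the right, with arguments that are explicit linear combinations of $x_1,x_2,x_3$.

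Third, I would apply the evenness $f(\pm x_1,\pm x_2,x_3)=f(x_1,x_2,x_3)$ to normalize each evaluation, freely choosing a preferred sign on the first two entries (so that, e.g., $f(x_2-x_1,\alpha,\beta)=f(x_1-x_2,\alpha,\beta)$). After normalization, I would verify that the two signed formal sums agree as multisets. Particular attention is needed for terms produced by $f\big|\sigma_1\sigma_i$ whose first argument involves triple combinations such as $x_1+x_2-x_3$ or $x_2-x_1-x_3$; these are precisely the terms that the evenness is designed to identify or annihilate in pairs, leaving only contributions matchable with the LHS.

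The main obstacle is purely bookkeeping: roughly thirty evaluations must be tracked through sign-flipping, and the cancellations are internal to groups of composites rather than occurring term by term. I would organize the check in a table indexed by canonical representatives of each triple $(A,B,C)$. No new ideas beyond direct enumeration and the hypothesized evenness are required; the identity is a formal cocycle relation among the five substitutions $\sigma_1,\ldots,\sigma_5$ that underpins the period-polynomial input to Theorem~\ref{thm:rest_even_C^j}.
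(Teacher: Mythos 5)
Your proposal is correct and takes essentially the same route as the paper, whose proof likewise just expands both sides term by term and invokes the evenness $f(\pm x_1,\pm x_2,x_3)=f(x_1,x_2,x_3)$ to identify the resulting evaluations. (One minor bookkeeping slip: after cancelling the four common summands the right-hand side contributes $2+2+4\cdot 4=20$ evaluations rather than $22$, but this does not affect the argument.)
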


\begin{proof}
One computes
\begin{align*}
f\big|\sigma_1\big|\sigma_2&= \big( f(x_2-x_1,x_1,x_3)-  f(x_2-x_1,x_2,x_3)\big)\big| \sigma_2\\
&=  \big( f(x_1-x_2,x_1,x_3)-  f(x_1-x_2,x_2,x_3)\big)\big| \sigma_2\\
&= \big( f((x_3-x_2)-x_1,x_3-x_2,x_2)-  f((x_3-x_2)-x_1,x_1,x_2)\big) \\
&- \big( f((x_3-x_2)-x_1,x_3-x_2,x_3)-  f((x_3-x_2)-x_1,x_1,x_3)\big) \\
&=-\big( f((x_3-x_2)-x_1,x_1,x_2)-  f((x_3-x_2)-x_1,x_3-x_2,x_2)\big)\\
&+\big( f((x_3-x_2)-x_1,x_1,x_3)-  f((x_3-x_2)-x_1,x_3-x_2,x_3)\big)\\
&=-\big( f(x_2-x_1,x_1,x_3)-  f(x_2-x_1,x_2,x_3)\big)\big| \sigma_3\\
&=-f\big|\sigma_1\big|\sigma_3,
\end{align*}
where for the second equality we have used $f(-x_1,x_2,x_3)=f(x_1,x_2,x_3)$.
With this, the equation \eqref{eq7_2} is reduced to
\begin{align*}
 f\big|\sigma_3\big|\sigma_1 +f\big|\sigma_3\big|\sigma_2  &= f\big|\sigma_4+f\big|\sigma_5+f\big|\sigma_1\big|\sigma_4+f\big|\sigma_1\big|\sigma_5.
 \end{align*}
This equality follows from the easily checked identities
\begin{align*}
&f\big|\sigma_{31}\big|\sigma_{11} = f\big| \sigma_{41} ,\ f\big|\sigma_{31}\big|\sigma_{12} = f\big| \sigma_{42} , f\big|\sigma_{32}\big|\sigma_{11} = f\big| \sigma_{12}\big|\sigma_{51},\ f\big|\sigma_{32}\big|\sigma_{12} = f\big| \sigma_{11}\big|\sigma_{51},\\
& f\big|\sigma_{31}\big|\sigma_{21} = f\big| \sigma_{11}\big|\sigma_{41},\  f\big|\sigma_{31}\big|\sigma_{22} = f\big| \sigma_{12}\big|\sigma_{41},\ f\big|\sigma_{32}\big|\sigma_{21} = f\big| \sigma_{52} ,\  f\big|\sigma_{32}\big|\sigma_{22} = f\big| \sigma_{51} 
\end{align*}
and 
\[f\big|\sigma_{11}\big|\sigma_{42}=f\big|\sigma_{12}\big|\sigma_{52},\ f\big| \sigma_{12}\big|\sigma_{42}=f\big| \sigma_{11}\big|\sigma_{52},\]
which are valid for $f\in\Q(x_1,x_2,x_3)$ such that $f(\pm x_1,\pm x_2,x_3)=f(x_1,x_2,x_3)$.
\end{proof}

Comparing the coefficients of both sides of the identity \eqref{eq7_2}, one gets a new expression of the integer $c\tbinom{m_1,m_2,m_3}{n_1,n_2,n_3}$.
For integers $m_1,m_2,m_3,n_1,n_2,n_3\ge1$, let us define an integer $h\tbinom{m_1,m_2,m_3}{n_1,n_2,n_3}$ by the formula
\begin{equation}\label{eq7_3}
\begin{aligned}
&h\tbinom{m_1,m_2,m_3}{n_1,n_2,n_3} = \delta\tbinom{m_1,m_2,m_3}{n_1,n_2,n_3} + \delta\tbinom{m_2}{n_1} b_{n_2,n_3}^{m_1}+\delta\tbinom{m_1}{n_1} b_{n_2,n_3}^{m_2}\\
&+(-1)^{m_1+m_2+n_3} \binom{m_2-1}{n_3-1}\left((-1)^{n_2} \binom{m_1-1}{n_2-1} - (-1)^{n_1} \binom{m_1-1}{n_1-1}\right)\\
&+(-1)^{n_1} \binom{m_2-1}{n_1-1} \left((-1)^{n_2}  \binom{m_1-1}{n_2-1}- (-1)^{n_3} \binom{m_1-1}{n_3-1}\right).
\end{aligned}
\end{equation}
Write $\mathbf{a}=\mathbf{o}\cup \mathbf{e}$, meaning the set of all integers $>1$.

\begin{corollary}\label{7_2}
Let $k$ be a positive even integer and $j\in\{1,2,3\}$.
For all pairs $(m_1,m_2,m_3)\in \mathbb{I}_k^{(3)}$ and $(n_1,n_2,n_3)\in \mathbb{I}_k^{(j)}$, we have
\begin{equation}\label{eq:c_deform} 
c\tbinom{m_1,m_2,m_3}{n_1,n_2,n_3} = \sum_{(k_1,k_2,k_3)\in \mathbb{I}_k (\mathbf{aae}) } e\tbinom{m_1,m_2}{k_1,k_2}\delta\tbinom{m_3}{k_3} h\tbinom{k_1,k_2,k_3}{n_1,n_2,n_3}.
\end{equation}
\end{corollary}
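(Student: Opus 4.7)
The plan is to apply Lemma \ref{7_1} to the test polynomial $f(x_1,x_2,x_3) = x_1^{m_1-1}x_2^{m_2-1}x_3^{m_3-1}$ associated to a triple $(m_1,m_2,m_3) \in \mathbb{I}_k^{(3)}$. Since $m_1$ and $m_2$ are odd, the exponents $m_1-1$ and $m_2-1$ are even, so the required symmetry $f(\pm x_1, \pm x_2, x_3) = f(x_1,x_2,x_3)$ holds and Lemma \ref{7_1} is applicable. I then extract the coefficient of $x_1^{n_1-1}x_2^{n_2-1}x_3^{n_3-1}$ from both sides of \eqref{eq7_2}. This yields an integer identity depending only on $\vec m$ and $\vec n$, valid for every decomposition $n_1+n_2+n_3=k$, so it specializes in particular to each $\vec n \in \mathbb{I}_k^{(j)}$ for any $j\in\{1,2,3\}$, handling the three cases uniformly.

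For the left-hand side $(f|(1+\sigma_3))|(1+\sigma_1+\sigma_2)$, I proceed in two stages. First, a direct binomial expansion of $x_1^{m_1-1}(x_2-x_3)^{m_2-1}(x_2^{m_3-1}-x_3^{m_3-1})$, using the parity of $m_2$ to identify the resulting combination of binomials with $b_{k_2,k_3}^{m_2}$, shows that the coefficient of $x_1^{k_1-1}x_2^{k_2-1}x_3^{k_3-1}$ in $f|(1+\sigma_3)$ equals $\delta\tbinom{m_1}{k_1}\,e\tbinom{m_2,m_3}{k_2,k_3}$. Second, for any monomial $x_1^{k_1-1}x_2^{k_2-1}x_3^{k_3-1}$, straightforward expansions of $\sigma_1$ and $\sigma_2$ show that their contributions to the coefficient of $x_1^{n_1-1}x_2^{n_2-1}x_3^{n_3-1}$ are $\delta\tbinom{k_3}{n_3}\,b_{n_1,n_2}^{k_1}$ and $\delta\tbinom{k_2}{n_1}\,b_{n_2,n_3}^{k_1}$; together with the trivial contribution $\delta\tbinom{k_1,k_2,k_3}{n_1,n_2,n_3}$ they sum to $e\tbinom{k_1,k_2,k_3}{n_1,n_2,n_3}$. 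Multiplying and summing over $\vec k$ reproduces Definition \ref{def:c} of $c\tbinom{\vec m}{\vec n}$.

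For the right-hand side $(f|(1+\sigma_1))|(1+\sigma_2+\sigma_3+\sigma_4+\sigma_5)$, the computation is analogous. Using that $m_1$ is odd, the same type of binomial expansion evaluates the coefficient of $x_1^{k_1-1}x_2^{k_2-1}x_3^{k_3-1}$ in $f|(1+\sigma_1)$ as $e\tbinom{m_1,m_2}{k_1,k_2}\,\delta\tbinom{m_3}{k_3}$. Then for any monomial $x_1^{k_1-1}x_2^{k_2-1}x_3^{k_3-1}$, I verify that the five operators $1,\sigma_2,\sigma_3,\sigma_4,\sigma_5$ contribute exactly the five summands defining $h\tbinom{k_1,k_2,k_3}{n_1,n_2,n_3}$ in \eqref{eq7_3}: the identity gives $\delta\tbinom{k_1,k_2,k_3}{n_1,n_2,n_3}$; $\sigma_2$ and $\sigma_3$ give $\delta\tbinom{k_2}{n_1}b_{n_2,n_3}^{k_1}$ and $\delta\tbinom{k_1}{n_1}b_{n_2,n_3}^{k_2}$ respectively; and $\sigma_5$, $\sigma_4$ produce the two ``double binomial'' lines of \eqref{eq7_3} through the expansions of $(x_a-x_b)^{k_1-1}(x_c-x_d)^{k_2-1}$ in the appropriate variables. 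Multiplying by the outer factor $e\tbinom{m_1,m_2}{k_1,k_2}\,\delta\tbinom{m_3}{k_3}$ and summing over $\vec k$ yields exactly the right-hand side of \eqref{eq:c_deform}.

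The principal obstacle is the sign bookkeeping for the $\sigma_4$ and $\sigma_5$ contributions, where one must expand the products $(x_a-x_b)^{k_1-1}(x_c-x_d)^{k_2-1}$ and track how the various sign prefactors organize into the normalized form $(-1)^{k_1+k_2+n_3}\binom{k_2-1}{n_3-1}\bigl[\cdots\bigr]$ appearing in \eqref{eq7_3}. The bookkeeping is mechanical once one systematically uses the weight constraint $k_1+k_2+k_3=n_1+n_2+n_3$ together with $(-1)^{-n}=(-1)^{n}$ to rewrite factors of the form $(-1)^{k_1+k_2-n_i-n_j}$ as $(-1)^{k_1+k_2+n_\ell}(-1)^{n_i}$ with $\ell$ the remaining index. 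Once assembled, the two sides of \eqref{eq7_2} coincide at the level of coefficients, which is precisely the content of \eqref{eq:c_deform}.
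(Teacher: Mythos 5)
Your proposal is correct and follows essentially the same route as the paper: apply Lemma \ref{7_1} to the monomial $x_1^{m_1-1}x_2^{m_2-1}x_3^{m_3-1}$ (the parity of $m_1,m_2$ guaranteeing the symmetry hypothesis), then read off the coefficient of $x_1^{n_1-1}x_2^{n_2-1}x_3^{n_3-1}$ on each side, identifying $1+\sigma_1+\sigma_2$ with $e\tbinom{\cdot}{\cdot}$, $1+\sigma_3$ with $\delta\tbinom{\cdot}{\cdot}e\tbinom{\cdot}{\cdot}$, and $1+\sigma_2+\sigma_3+\sigma_4+\sigma_5$ with $h\tbinom{\cdot}{\cdot}$. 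The only nitpick is the attribution of the two double-binomial lines of \eqref{eq7_3}: $\sigma_4$ yields the line with common factor $\binom{k_2-1}{n_3-1}$ and $\sigma_5$ the one with $\binom{k_2-1}{n_1-1}$ (your ordering reads as the reverse), but together they give exactly those two terms, so the argument stands.
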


\begin{proof}
One can easily verify that the coefficient of $x_1^{n_1-1}x_2^{n_2-1}x_3^{n_3-1}$ in
\[ x_1^{m_1-1}x_2^{m_2-1}x_3^{m_3-1}\big| (1+\sigma_1+\sigma_2)\quad \mbox{ and} \quad x_1^{m_1-1}x_2^{m_2-1}x_3^{m_3-1}\big| (1+\sigma_3)\]
equals $e\tbinom{m_1,m_2,m_3}{n_1,n_2,n_3}$ (see \eqref{eq:def_e3}) and $\delta\tbinom{m_1}{n_1}e\tbinom{m_2,m_3}{n_2,n_3}$, respectively.
Thus, by Definition \ref{def:c}, the coefficient of $x_1^{n_1-1}x_2^{n_2-1}x_3^{n_3-1}$ in $\big(x_1^{m_1-1}x_2^{m_2-1}x_3^{m_3-1}\big| (1+\sigma_3) \big) \big| (1+\sigma_1+\sigma_2)$ equals $c\tbinom{m_1,m_2,m_3}{n_1,n_2,n_3}$.
Likewise, one can check that the coefficient of $x_1^{n_1-1}x_2^{n_2-1}x_3^{n_3-1}$ in 
\[ x_1^{m_1-1}x_2^{m_2-1}x_3^{m_3-1}\big| (1+\sigma_1)\]
and 
\begin{align*}
&x_1^{m_1-1}x_2^{m_2-1}x_3^{m_3-1}\big|\big(1+\sigma_2+\sigma_3+\sigma_4+\sigma_5\big)\\
&= x_1^{m_1-1}x_2^{m_2-1}x_3^{m_3-1} + \big(x_{3,2}^{m_1-1}x_1^{m_2-1}x_2^{m_3-1}-x_{3,2}^{m_1-1}x_1^{m_2-1}x_3^{m_3-1}\big)\\
&+\big(x_1^{m_1-1}x_{3,2}^{m_2-1}x_2^{m_3-1} - x_1^{m_1-1}x_{3,2}^{m_2-1}x_3^{m_3-1}\big)\\
&+\big(x_{2,1}^{m_1-1}x_{3,1}^{m_2-1}x_1^{m_3-1} - x_{1,2}^{m_1-1}x_{3,2}^{m_2-1}x_2^{m_3-1}\big)\\
&+\big(x_{3,2}^{m_1-1}x_{3,1}^{m_2-1}x_3^{m_3-1}-x_{2,3}^{m_1-1}x_{2,1}^{m_2-1}x_2^{m_3-1}\big)\\
\end{align*}
are $e\tbinom{m_1,m_2}{n_1,n_2}\delta\tbinom{m_3}{n_3}$ and $h\tbinom{m_1,m_2,m_3}{n_1,n_2,n_3}$, respectively, where we set $x_{i,j}=x_i-x_j$.
Therefore, the identity \eqref{eq:c_deform} follows from \eqref{eq7_2}.
\end{proof}

We are ready to prove Theorem \ref{thm:rest_even_C^j}.

\begin{proof}[Proof of Theorem \ref{thm:rest_even_C^j}]
By Corollary \ref{7_2}, the matrix $C_k^{(j)}$ is written as 
\begin{equation*}\label{eq:decomp_Ckj} 
C_k^{(j)} = \left(e\tbinom{m_1,m_2}{n_1,n_2} \delta\tbinom{m_3}{n_3} \right)_{\begin{subarray}{c} (m_1,m_2,m_3)\in \mathbb{I}_k^{(3)} \\ (n_1,n_2,n_3)\in \mathbb{I}_k(\mathbf{aae})  \end{subarray}} \cdot \left(h\tbinom{\bf m}{\bf n} \right)_{\begin{subarray}{l} {\bf m} \in \mathbb{I}_k(\mathbf{aae})  \\ {\bf n}\in \mathbb{I}_k^{(j)} \end{subarray}}. 
\end{equation*} 
The first matrix on the right can be written in terms of a block matrix with blocks $C_{k-n}$ for $n=2,4,\ldots,k-6$:
\begin{equation*}
\left(e\tbinom{m_1,m_2}{n_1,n_2} \delta\tbinom{m_3}{n_3} \right)_{\begin{subarray}{c} (m_1,m_2,m_3)\in \mathbb{I}_k^{(3)} \\ (n_1,n_2,n_3)\in \mathbb{I}_k (\mathbf{aae}) \end{subarray}}  = {\rm diag} ( C_{k-2},C_{k-4},\ldots, C_6 ),
\end{equation*}
where for $k$ even the matrix $C_k$ is defined to be
\begin{equation*}\label{matC}
C_k= \left( e\tbinom{\bf m}{\bf n} \right)_{\begin{subarray}{l} {\bf m}\in  \mathbb{I}_k (\mathbf{oo})\\ {\bf n}\in \mathbb{I}_k (\mathbf{aa}) \end{subarray}},
\end{equation*}
whose rows and columns are indexed by ${\bf m}$ and ${\bf n}$ in the sets $\mathbb{I}_k (\mathbf{oo})$ and $\mathbb{I}_k (\mathbf{aa})$, respectively.
Let $\ker C_k$ be the $\Q$-vector space of left annihilators of $C_k$.
There is an embedding $\bigoplus_{1<n<k:{\rm even}} \ker C_{k-n} \rightarrow \ker C_k^{(j)}$ that sends $( a_{n_1,n_2})_{(n_1,n_2)\in \mathbb{I}_{k-n}(\mathbf{oo})} \in  \ker C_{k-n}  $ to $( a_{n_1,n_2}\delta\tbinom{n}{n_3})_{(n_1,n_2,n_3)\in \mathbb{I}_k(\mathbf{ooe})} \in  \ker C_k^{(j)} $.
Hence, Theorem \ref{thm:rest_even_C^j} follows from the fact that the map
\begin{align*}
W_{k-n}^{+,0} & \longrightarrow \ker C_{k-n}\\
\sum_{(n_1,n_2)\in \mathbb{I}_{k-n}(\mathbf{oo})} a_{n_1,n_2}x_1^{n_1-1}x_2^{n_2-1} & \longmapsto ( a_{n_1,n_2})_{(n_1,n_2)\in \mathbb{I}_{k-n}(\mathbf{oo})} 
\end{align*}
is an isomorphism as $\Q$-vector spaces, which is \cite[Proposition 3.4]{T} and equivalent to the result of Baumard and Schneps \cite[Proposition 3.2]{BS}. 
We complete the proof.
\end{proof}

\begin{remark}
As a consequence of Corollary \ref{7_2} and Theorem \ref{thm:rankC}, one can give an explicit formula for the parity theorem of $\zeta^\m(n_1,n_2,n_3)$ with $(n_1,n_2,n_3)\in \mathbb{I}_k^{(j)}$ (the depth 2 case is mentioned in \eqref{eq:dep2_parity}).
A similar result can be found in \cite[Eq.~(1.12)]{P} (but not known that the formula is lifted to motivic multiple zeta values), and the formula may be different from the formula below due to linear relations among multiple zeta values.

\begin{theorem}\label{9_1}
Let $k$ be a positive even integer. 
For each $(n_1,n_2,n_3)\in\mathbb{I}_k^{(j)}$, we have
\[ \zeta^\m(n_1,n_2,n_3) \equiv \sum_{(k_1,k_2,k_3)\in \mathbb{I}_k(\mathbf{aae})}h\tbinom{k_1,k_2,k_3}{n_1,n_2,n_3} \zeta^\m(k_1,k_2) \zeta^\m(k_3) \mod \mathfrak{D}_2\mathcal{H}_k,\]
where $h\tbinom{k_1,k_2,k_3}{n_1,n_2,n_3} \in\Z$ is defined in \eqref{eq7_3}.
\end{theorem}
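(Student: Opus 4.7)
The plan is to derive Theorem \ref{9_1} from Corollary \ref{7_2} combined with the parity-theorem-plus-intersection argument that appears in the converse direction of the proof of Theorem \ref{thm:rankC}. Set
\[\xi=\zeta^\m(n_1,n_2,n_3)-\sum_{(k_1,k_2,k_3)\in \mathbb{I}_k(\mathbf{aae})} h\tbinom{k_1,k_2,k_3}{n_1,n_2,n_3}\,\zeta^\m(k_1,k_2)\zeta^\m(k_3).\]
The first step is to verify $\phi(\xi)\in\mathcal{U}_{k,2}$. Since $\phi$ is an algebra homomorphism with $\phi(\zeta^\m(k_3))=f_{k_3}$, Proposition \ref{prop:formula_phi} i) yields $\phi(\zeta^\m(k_1,k_2))=\sum_{m_1,m_2}e\tbinom{m_1,m_2}{k_1,k_2}f_{m_1}f_{m_2}+\tau(k_1,k_2)f_{k_1+k_2}$ for some $\tau(k_1,k_2)\in\Q$. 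Because $k$ and $k_3$ are both even, $k_1+k_2$ is even and hence $f_{k_1+k_2}\cdot f_{k_3}\in\Q f_k$. Multiplying through by $f_{k_3}$ and summing with weights $h\tbinom{k_1,k_2,k_3}{n_1,n_2,n_3}$ over $(k_1,k_2,k_3)\in\mathbb{I}_k(\mathbf{aae})$, Corollary \ref{7_2} lets me collect the resulting triple sum as $\sum c\tbinom{m_1,m_2,m_3}{n_1,n_2,n_3}f_{m_1}f_{m_2}f_{m_3}$, which by Proposition \ref{prop:formula_phi} ii) coincides with the canonical part of $\phi(\zeta^\m(n_1,n_2,n_3))$ modulo $\mathcal{U}_{k,2}$. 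Since $\Q f_k\subseteq\mathcal{U}_{k,2}$, this gives $\phi(\xi)\in\mathcal{U}_{k,2}$.

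The second step is to promote this to $\xi\in\mathfrak{D}_2\mathcal{H}_k$. The naive implication $\phi(\xi)\in\mathcal{U}_{k,2}\Rightarrow \xi\in\mathfrak{D}_2\mathcal{H}_k$ fails in general because $\phi(\mathfrak{D}_2\mathcal{H}_k)$ is typically a proper subspace of $\mathcal{U}_{k,2}$ (the gap being related to the existence of cusp forms for $\mathrm{SL}_2(\Z)$). Instead, following the converse direction of the proof of Theorem \ref{thm:rankC}, I invoke the motivic parity theorem (lifted to the motivic setting via \cite[Theorem 7.4]{G2}) to decompose $\zeta^\m(n_1,n_2,n_3)=\sum\tilde b\,\zeta^\m(\tilde k_1,\tilde k_2)\zeta^\m(\tilde k_3)+\xi''$ with $\tilde k_3$ even, $\tilde b\in\Q$, and $\xi''\in\mathfrak{D}_2\mathcal{H}_k$. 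Then $\xi=\sum(\tilde b-h)\,\zeta^\m(\tilde k_1,\tilde k_2)\zeta^\m(\tilde k_3)+\xi''$. Since $\phi(\xi),\phi(\xi'')\in\mathcal{U}_{k,2}$, the difference $\phi\bigl(\sum(\tilde b-h)\,\zeta^\m\zeta^\m\bigr)$ also lies in $\mathcal{U}_{k,2}$; on the other hand, by \eqref{eq:dep2_phi} applied term-by-term, it lies in the $\Q$-span of $\{f_{\rm odd}f_{\rm odd}f_{\tilde k_3}\}\cup\{f_k\}$. A bigraded comparison of $\mathcal{U}$ by noncommutative word-length in the $f_{2i+1}$'s versus $f_2$-exponent, using $\tilde k_3/2\ge 1$, identifies the intersection of these two subspaces as $\Q f_k$. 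Hence $\sum(\tilde b-h)\,\zeta^\m\zeta^\m\in\Q\zeta^\m(k)\subseteq\mathfrak{D}_2\mathcal{H}_k$, proving $\xi\in\mathfrak{D}_2\mathcal{H}_k$.

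The main obstacle is the passage in the second step from information about $\phi(\xi)$ to a statement modulo $\mathfrak{D}_2\mathcal{H}_k$; this is forced to go through the parity-theorem-plus-intersection detour inherited from the proof of Theorem \ref{thm:rankC}, since $\phi$ does not surject from $\mathfrak{D}_2\mathcal{H}_k$ onto $\mathcal{U}_{k,2}$ in general. The combinatorial content of the first step is entirely packaged in Corollary \ref{7_2} (equivalently Lemma \ref{7_1}), after which the matching of canonical $\phi$-images is a formal manipulation.
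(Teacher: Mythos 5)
Your argument is correct and is precisely the proof the paper intends: the authors omit the proof of Theorem \ref{9_1} with the remark that it is ``very similar to the proof of Theorem \ref{thm:rankC}'', and your two steps --- establishing $\phi(\xi)\in\mathcal{U}_{k,2}$ by combining Corollary \ref{7_2} with Proposition \ref{prop:formula_phi}, and then passing to $\xi\in\mathfrak{D}_2\mathcal{H}_k$ via the parity decomposition and the intersection-equals-$\Q f_k$ argument --- reproduce the forward direction of that proof exactly. Nothing further is needed.
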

The proof is very similar to the proof of Theorem \ref{thm:rankC}, so is omitted.

\end{remark}

\subsection{Proof of Theorem \ref{thm:B^3}}
In this subsection, we prove Theorem \ref{thm:B^3}.

\begin{proof}[Proof of Theorem \ref{thm:B^3}]
By definition, the matrix $B_k^{(3)}$ can be written in terms of a block diagonal matrix with blocks $B_{k-n}$ for $n=3,5,\ldots,k-5$:
\begin{equation*}\label{eq:decomp_Bk3}
\begin{aligned}
B_k^{(3)}&={\rm diag}(B_{k-3},B_{k-5},\ldots,B_5),
\end{aligned}
\end{equation*}
where the matrix $B_k$ is defined for $k$ odd by 
\begin{equation*}
B_k= \left( e\tbinom{m_1,m_2}{n_1,n_2} \right)_{\begin{subarray}{l} (m_1,m_2)\in \mathbb{I}_{k}(\mathbf{oe})\\ (n_1,n_2)\in\mathbb{I}_{k}(\mathbf{oe}) \end{subarray}}.
\end{equation*}
Since there is an embedding $\bigoplus_{3\le n \le k-5:{\rm odd}} \ker B_{k-n}\rightarrow \ker B_k^{(3)}$, Theorem \ref{thm:B^3} follows from a result by Zagier on the matrix $\mathcal{B}_K$ in \cite[\S6]{Z3}.
Since the matrix $\mathcal{B}_K$ is slightly different from our matrix $B_k$, we sketch the proof.

It is easily seen that the assertion that a row vector $(a_{n_1,n_2})_{(n_1,n_2) \in \mathbb{I}_k(\mathbf{oe})}$ lies in $\ker B_k$ is equivalent to the statement that the polynomial $q(x_1,x_2)=\sum_{(n_1,n_2)\in \mathbb{I}_k(\mathbf{oe})} a_{n_1,n_2} x_1^{n_1-1}x_2^{n_2-1}$ satisfies
\[ q(x_1,x_2)-q(x_2-x_1,x_2)+q(x_2-x_1,x_1) = \mbox{(odd polynomial in $x_1$)}.\]
With this, one finds that the map
\begin{align*}
W_{k+1}^{+,0} & \longrightarrow \ker B_k\\
p(x_1,x_2)=\sum_{(n_1,n_2)\in \mathbb{I}_{k+1}(\mathbf{oo})}a_{n_1,n_2} x_1^{n_1-1}x_2^{n_2-1} &\longmapsto (n_2 a_{n_2+1,n_1})_{(n_1,n_2)\in \mathbb{I}_{k}(\mathbf{oe})},
\end{align*}
where the image is the coefficient vector of $\frac{dp}{dx_1}(x_2,x_1)$, and the map
\begin{align*}
W_{k-1}^{-} & \longrightarrow \ker B_k\\
p(x_1,x_2)=\sum_{(n_1,n_2)\in \mathbb{I}_{k-1}(\mathbf{ee})}a_{n_1,n_2} x_1^{n_1-1}x_2^{n_2-1} &\longmapsto (a_{n_1-1,n_2})_{(n_1,n_2)\in \mathbb{I}_{k}(\mathbf{oe})},
\end{align*}
where the image is the coefficient vector of $x_1p(x_1,x_2)$, are well-defined.
By definition, the injectivity of each of these maps is obvious.
The injectivity of the combined map $W_{k+1}^{+,0}\oplus W_{k-1}^- \rightarrow \ker B_k$ is also obvious since the images have incompatible symmetry properties. 
We complete the proof.
\end{proof}

For example, the matrices
\[ B_{11}=\left(
\begin{array}{cccc}
 0 & 0 & 0 & -2 \\
 -6 & 0 & -4 & -4 \\
 -15 & -21 & -20 & -6 \\
 -36 & -126 & -84 & -8 \\
\end{array}
\right),\ 
B_{13}=\left(
\begin{array}{ccccc}
 0 & 0 & 0 & 0 & -2 \\
 -6 & 0 & 0 & -4 & -4 \\
 -15 & -15 & -6 & -20 & -6 \\
 -28 & -78 & -84 & -56 & -8 \\
 -55 & -330 & -462 & -165 & -10 \\
\end{array}
\right)\]
have left annihilators 
\[\left(
\begin{array}{cccc}
 -4 & 9 & -6 & 1 \\
\end{array}
\right),\ \left(
\begin{array}{ccccc}
 4 & -25 & 42 & -25 & 4 \\
\end{array}
\right),\]
respectively.
These examples can be found in \cite[p.995]{Z3}.
The equality $\dim_\Q \ker B_k \stackrel{?}{=} \dim_\Q W_{k-1}^- + \dim_\Q W_{k+1}^{+,0}$ is not known.
We also expect
\[ \sum_{N>0} \dim_{\Q}\ker B_k^{(3)} x^k \stackrel{?}{=} \big(x+ \frac{1}{x}\big) \odd(x)\cusp(x),\]
which has been checked by Mathematica up to $k=40$.

\subsection{Proof of Theorem \ref{thm:B^3+E^3}}

We first give a precise statement of Theorem \ref{thm:B^3+E^3} and then prove it.

For $k$ even, consider an extended index set of $ \mathbb{I}_k^{(3)}$ allowing the cases when $n_3=0$:
\[ \widehat{\mathbb{I}}_k^{(3)} =\{ {\bf n}= (n_1,n_2,n_3)\in \Z^3_{\ge0} \mid k=n_1+n_2+n_3,\ n_1,n_2\in\mathbf{o},\ n_3\ge0:{\rm even} \}.\]
We set
\begin{align*}
\V_k^{(3)} = \{ (a_{\bf n})_{{\bf n}\in \mathbb{I}_k^{(3)}} \mid a_{\bf n}\in\Q \}, \ \widehat{\V}_k^{(3)} = \{ (a_{\bf n})_{{\bf n}\in \widehat{\mathbb{I}}_k^{(3)}} \mid a_{\bf n}\in\Q \}.
\end{align*}
Our target space ${\rm Im}\, B_k^{(3)} \cap \ker E_k^{(3)}$, which is viewed as a subspace of $\V_k^{(3)}$, can be embedded into the space $\widehat{\V}_k^{(3)}$ via embedding
\begin{equation*}
\begin{aligned}
i_0 : \V_k^{(3)} &\longrightarrow \widehat{\V}_k^{(3)}\\
(a_{\bf n})_{{\bf n}\in \mathbb{I}_k^{(3)}} &\longmapsto (a_{\bf n})_{{\bf n}\in \widehat{\mathbb{I}}_k^{(3)}},
\end{aligned}
\end{equation*}
where we simply put $a_{2n+1,k-2n-1,0}=0$ for all $1\le n \le k/2-2$.

Let $\widehat{V}_k$ be the $|\widehat{\mathbb{I}}_k^{(3)}|$-dimensional $\Q$-vector space spanned by the set $\{x_1^{n_1-1}x_2^{n_2-1} x_3^{n_3-1}\mid (n_1,n_2,n_3)\in \widehat{\mathbb{I}}_k^{(3)}\}$, which is a subspace of $\Q[x_1,x_2,x_3^{\pm 1}]$ and isomorphic to the space $\widehat{\V}_k^{(3)} $.
The isomorphism is denoted by $\rho(=\rho^{(k)})$:
\begin{equation*}
\begin{aligned}
\rho:\widehat{V}_k &\longrightarrow \widehat{\V}_k^{(3)}\\
  x_1^{m_1-1}x_2^{m_2-1} x_3^{m_3-1}&\longmapsto \big(\delta\tbinom{m_1,m_2,m_3}{n_1,n_2,n_3}\big)_{(n_1,n_2,n_3)\in \widehat{\mathbb{I}}_k^{(3)}}.
\end{aligned}
\end{equation*}
We note that the space $\widehat{P}_k^+$ is a subspace of $\widehat{V}_k$, and hence the space $\rho(\widehat{P}_k^+)$ is a subspace of $\widehat{\V}_k^{(3)}$.
With these notations, the precise statement of Theorem \ref{thm:B^3+E^3} is as follows.
We note that for ${\bf m},{\bf n}\in  \widehat{\mathbb{I}}_k^{(3)}$ the integer $e\tbinom{\bf m}{\bf n}$ is well-defined (see \eqref{eq:def_e3}).

\begin{theorem}\label{8_4}
Let $k$ be a positive even integer. 
Define a square matrix $L_k$ by
\[L_k=\left( e\tbinom{\bf m}{\bf n}-\delta\tbinom{\bf m}{\bf n} \right)_{\begin{subarray}{c} {\bf m}\in \widehat{\mathbb{I}}_k^{(3)}\\ {\bf n}\in \widehat{\mathbb{I}}_k^{(3)}\end{subarray}},\]
which is viewed as a linear map $L_k:\widehat{\V}_k^{(3)}\rightarrow \widehat{\V}_k^{(3)}$ given by $L_k(v)=v\cdot L_k$ for $v\in \widehat{\V}_k^{(3)}$.
Then, the map
\[ L_k : \rho(\widehat{{P}}_k^+) \longrightarrow i_0\big({\rm Im}\, B_k^{(3)} \cap \ker E_k^{(3)}\big)\]
is well-defied.
\end{theorem}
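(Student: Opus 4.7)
The plan is to translate the statement into a polynomial identity via the $\sigma$-operators of Eq.~\eqref{eq7_1}, working throughout inside $\widehat V_k$; write $V_k$ for the subspace of $\widehat V_k$ spanned by monomials indexed by $\mathbb{I}_k^{(3)}$. A coefficient-by-coefficient check in the spirit of the proof of Corollary~\ref{7_2} shows that, under $\rho$, $L_k$ corresponds to $\sigma_1+\sigma_2$ acting on $\widehat V_k$ composed with projection back to $\widehat V_k$, while $E_k^{(3)}$ corresponds to $1+\sigma_1+\sigma_2$ and $B_k^{(3)}$ to $1+\sigma_3$, each restricted to $V_k$. For $p = q(x_1,x_2)\,x_3^{N}\in\widehat P_k^+$ with $q\in W_n^{+,0}$, $n$ even, $0<n\le k$, and $N=k-n-1$, I then compute
\[
p|(\sigma_1+\sigma_2) = [q(x_2-x_1,x_1)-q(x_2-x_1,x_2)]\,x_3^{N} + q(x_3-x_2,x_1)(x_2^{N}-x_3^{N}).
\]
Simplifying the bracket via the cocycle of $q$ at $(a,b)=(x_2,-x_1)$ combined with evenness of $q$ in both arguments (valid because $n$ is even) collapses it to $q(x_2,x_1)$, so $p|(\sigma_1+\sigma_2) = q(x_2,x_1)\,x_3^{N} + q(x_3-x_2,x_1)(x_2^{N}-x_3^{N})$.

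\textbf{Three conditions via the preimage $V := q(x_2,x_1)\,x_3^{N}$.} Direct substitution gives $V|\sigma_3 = q(x_3-x_2,x_1)(x_2^{N}-x_3^{N})$ by evenness, hence $V + V|\sigma_3 = p|(\sigma_1+\sigma_2)$ as an identity of (Laurent) polynomials. This yields well-definedness at once: the $x_3^{-1}$-coefficients of $V$ and of $V|\sigma_3$ cancel by evenness when $n=k$. It also yields $L_k\rho(p)\in\ker E_k^{(3)}$ via a second application of the cocycle at $(a,b)=(x_1,x_2-x_1)$, which produces $V|\sigma_1 = -V$ and hence $V|(1+\sigma_1)=0$. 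Since $V$ is even in both $x_1$ and $x_2$, Lemma~\ref{7_1} applies and gives
\[
p|(\sigma_1+\sigma_2)(1+\sigma_1+\sigma_2) = V|(1+\sigma_3)(1+\sigma_1+\sigma_2) = V|(1+\sigma_1)(1+\sigma_2+\sigma_3+\sigma_4+\sigma_5) = 0
\]
identically. For $L_k\rho(p)\in\mathrm{Im}\,B_k^{(3)}$, the identity $V\cdot(1+\sigma_3) = p|(\sigma_1+\sigma_2)$ exhibits $V$ as a preimage: when $n<k$, a short parity check shows that the part of $V$ outside $V_k$ contributes nothing to $V_k$ after application of $(1+\sigma_3)$, so the $V_k$-projection of $V$ is itself a valid preimage in $V_k$.

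\textbf{Main obstacle.} Once the preimage $V = q(x_2,x_1)x_3^N$ is chosen and the identity $V|(1+\sigma_1)=0$ is spotted, the proof is brief for $n<k$. The remaining delicate point is the boundary case $n=k$, where $V = q(x_2,x_1)/x_3$ lies in $\widehat V_k$ but its $V_k$-projection vanishes, so the naive preimage is insufficient for the $\mathrm{Im}\,B_k^{(3)}$ condition. One must then construct a genuinely different $W\in V_k$ realizing the target vector in $\mathrm{Im}\,B_k^{(3)}$, working from the explicit $x_2$- and $x_3$-expansions of $q(x_3-x_2,x_1)(1/x_2-1/x_3)$ subject to the parity constraints defining $\mathbb{I}_k^{(3)}$. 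This combinatorial construction --- analogous in spirit to but more intricate than the parity analyses underlying Theorems~\ref{thm:rest_even_C^j} and~\ref{thm:B^3} --- is the most technical step of the proof.
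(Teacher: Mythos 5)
Your setup is sound and, up to the final step, tracks the paper's own argument: the identity $p|(\sigma_1+\sigma_2)=V|(1+\sigma_3)$ with $V=q(x_2,x_1)x_3^{N}=-p$ is precisely the paper's relation $L_k(\rho(p))=-\widehat{B}_k^{(3)}(\rho(p))$, and your derivation of the $\ker E_k^{(3)}$ condition by feeding $V|(1+\sigma_1)=0$ into Lemma \ref{7_1} is a clean (arguably slicker) route to \eqref{eq8_3} than the direct reduction via \eqref{eq8_1} and \eqref{eq8_2}. The cancellation of the $x_3^{-1}$-coefficients and the descent from $\widehat{E}_k^{(3)}$ to $i_0(\ker E_k^{(3)})$ also match the paper, and for the components with $n<k$ your preimage $V=-p$ does lie in $V_k$, so the ${\rm Im}\,B_k^{(3)}$ condition is immediate there.

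The genuine gap is the ${\rm Im}\,B_k^{(3)}$ condition for the top component $n=k$, which you correctly isolate as the crux and then leave unproved (``one must then construct a genuinely different $W$\dots''). This is not a routine parity expansion, and the paper does not attack it by writing down a preimage directly. Instead it proves Lemma \ref{8_5}: for every odd $k'\ge3$ the extended matrix $\widehat{B}_{k'}$ has a left annihilator with nonzero $(k',0)$-entry, constructed from Zagier's extended period polynomial of the Eisenstein series, i.e.\ the Bernoulli Laurent polynomials $\widehat{G}_{k'-1}(x_1,x_2)$ and the cocycle relation \eqref{eq:rel_G}. Assembling these blockwise yields, for each $(m_1,m_2)\in\mathbb{I}_k(\mathbf{oo})$, a vector $v_{m_1,m_2}\in\ker\widehat{B}_k^{(3)}$ whose $n_3=0$ part is the corresponding delta vector; subtracting $\sum a_{n_1,n_2,0}\,v_{n_1,n_2}$ from $\rho(p)$ kills the $n_3=0$ components without changing $\widehat{B}_k^{(3)}(\rho(p))$, and the compatibility $\widehat{B}_k^{(3)}\circ i_0=i_0\circ B_k^{(3)}$ (which uses $e\tbinom{m,k-m}{k,0}=0$ for $k-m>0$) then finishes the claim. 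This Eisenstein-series input is the essential missing idea in your proposal; without it, the inclusion $L_k(\rho(\widehat{P}_k^+))\subset i_0({\rm Im}\,B_k^{(3)})$ is unestablished exactly in the case that makes $\widehat{P}_k^+$ strictly larger than $P_k^+$, which is the whole point of Theorem \ref{8_4}.
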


\begin{proof}
The proof is done by showing the following claims:\\
(Claim 1) $L_k\big( \rho\big(\widehat{{P}}_k^+\big) \big)\subset i_0 \big( \ker E_k^{(3)} \big)$,\\
(Claim 2) $L_k\big( \rho\big(\widehat{{P}}_k^+\big) \big)\subset i_0 \big( {\rm Im}\, B_k^{(3)} \big)$,\\
from which, by $i_0\big({\rm Im}\, B_k^{(3)} \cap \ker E_k^{(3)}\big)=i_0\big({\rm Im}\, B_k^{(3)}\big) \cap i_0\big( \ker E_k^{(3)}\big)$, Theorem \ref{8_4} follows.

(Claim 1).
Define a square matrix $\widehat{E}_k^{(3)}$ by
\[
\widehat{E}_k^{(3)} = \left( e\tbinom{\bf m}{\bf n} \right)_{\begin{subarray}{c} {\bf m}\in \widehat{\mathbb{I}}_k^{(3)}\\ {\bf n}\in \widehat{\mathbb{I}}_k^{(3)} \end{subarray}}.\]
We first prove $L_k (\rho\big(\widehat{{P}}_k^+\big) ) \subset \ker \widehat{E}_k^{(3)}$, and then $L_k\big( \rho\big(\widehat{{P}}_k^+\big) \big)\subset i_0 \big( \ker E_k^{(3)} \big)$.

For any $p(x_1,x_2,x_3)\in \widehat{{V}}_k$, one easily sees that for each $(n_1,n_2,n_3)\in \widehat{\mathbb{I}}_k^{(3)}$
\begin{equation}\label{eq:cof1}
\begin{aligned}
&\mbox{the coefficient of $x_1^{n_1-1}x_2^{n_2-1} x_3^{n_3-1}$ in $\big(p\big| (\sigma_1+\sigma_2)\big) \big|(1+\sigma_1+\sigma_2)$}\\
&=\mbox{$(n_1,n_2,n_3)$-th entry of the row vector $\rho(p)\cdot L_k\cdot \widehat{E}_k^{(3)}$},
\end{aligned}
\end{equation}
where  $\sigma_i$'s are defined in \eqref{eq7_1}.
Since $p(x_1,x_2,x_3)\in \widehat{{P}}_k^+$ satisfies 
\begin{equation}\label{eq8_1}
p(x_1,x_2,x_3)\big| (1+\sigma_1)=0
\end{equation}
(i.e. $p\big| \sigma_1=-p$), we have 
\begin{equation}\label{eq:mid1} 
\big(p\big| (\sigma_1+\sigma_2)\big) \big|(1+\sigma_1+\sigma_2) = \big(p\big| \sigma_2\big) \big|\sigma_1+\big(p\big| \sigma_2\big) \big|\sigma_2.
\end{equation}
The antisymmetry of the coefficients of even period polynomials shows
\begin{equation}\label{eq8_2} 
p(x_1,x_2,x_3)+p(x_2,x_1,x_3)=0.
\end{equation}
Using \eqref{eq8_1} and \eqref{eq8_2}, one can easily see that the right-hand side of \eqref{eq:mid1} is reduced to 0 (we refer the reader to \cite[Eq.~(3.14)]{T} for the detailed verification).
Hence 
\begin{equation}\label{eq8_3}
\big(p\big| (\sigma_1+\sigma_2)\big) \big|(1+\sigma_1+\sigma_2)=0
\end{equation}
holds for $p\in \widehat{P}_k^+$, and by \eqref{eq:cof1} we have $L_k(\rho(\widehat{{P}}_k^+))\subset \ker\widehat{E}_k^{(3)}$.

Let us turn to the proof of the inclusion $L_k(\rho(\widehat{{P}}_k^+))\subset i_0(\ker E_k^{(3)})$.
We fix $p\in \widehat{{P}}_k^+$ and write $\rho(p)=(a_{\bf n})_{{\bf n}\in \widehat{\mathbb{I}}^{(3)}_k}$ and $L_k(  \rho(p)) = \big( c_{\bf n} \big)_{{\bf n}\in \widehat{\mathbb{I}}_k^{(3)}}$.
Since $e\tbinom{m_1,m_2,m_3}{n_1,n_2,n_3}=0$ whenever $m_3>0$ and $n_3=0$ (see \eqref{eq:def_e3}), one computes for $n_1,n_2\ge3$ odd with $n_1+n_2=k$
\begin{align*}
 c_{n_1,n_2,0}&=\sum_{\substack{m_1+m_2=k\\m_1,m_2\ge3:{\rm odd}}} a_{m_1,m_2,0} \bigg( e\tbinom{m_1,m_2,0}{n_1,n_2,0} - \delta\tbinom{m_1,m_2}{n_1,n_2} \bigg)\\
 & = \sum_{\substack{m_1+m_2=k\\m_1,m_2\ge3:{\rm odd}}} a_{m_1,m_2,0} \bigg(b_{n_1,n_2}^{m_1}- \delta\tbinom{m_2,m_1}{n_1,n_2} \bigg)\\
 & =  \sum_{\substack{m_1+m_2=k\\m_1,m_2\ge3:{\rm odd}}} a_{m_1,m_2,0} \bigg(b_{n_1,n_2}^{m_1}+ \delta\tbinom{m_1,m_2}{n_1,n_2} \bigg)=0,
\end{align*}
where for the third equality we have used \eqref{eq8_2}
and the last equality is obtained from \eqref{eq8_1}.
We have shown $L_k(\rho(p))\in \ker \widehat{E}_k^{(3)}$, so we get for any ${\bf n}\in \widehat{\mathbb{I}}_k^{(3)}$
\[  \sum_{{\bf m}\in \widehat{\mathbb{I}}_k^{(3)} } c_{\bf m} e\tbinom{\bf m}{\bf n}=0.\]
Since $ c_{n_1,n_2,0}=0$, this leads for any ${\bf n} \in \mathbb{I}_k^{(3)}$ to the relation
\[\sum_{{\bf m}\in \mathbb{I}_k^{(3)} } c_{\bf m } e\tbinom{\bf m}{\bf n}=0,\]
which implies that the row vector $(c_{\bf n})_{{\bf n}\in \mathbb{I}_k^{(3)}}$ lies in $\ker E_k^{(3)}$.
The claim 1 is done.

(Claim 2).
Let us prove $L_k(\rho(\widehat{{P}}_k^+))\subset i_0(\ker B_k^{(3)})$.
For $k$ even, define a square matrix $\widehat{B}_k^{(3)}$ by
\[
\widehat{B}_k^{(3)} = \left( \delta\tbinom{m_1}{n_1}e\tbinom{m_2,m_3}{n_2,n_3} \right)_{\begin{subarray}{c} (m_1,m_2,m_3)\in \widehat{\mathbb{I}}_k^{(3)}\\ (n_1,n_2,n_3)\in \widehat{\mathbb{I}}_k^{(3)} \end{subarray}}.\]
We first show $L_k(\rho(\widehat{{P}}_k^+))=\widehat{B}_k^{(3)}(\rho(\widehat{{P}}_k^+))$, and then $\widehat{B}_k^{(3)}(\rho(\widehat{{P}}_k^+)) \subset i_0 \big( {\rm Im}\, B_k^{(3)} \big)$.

Again, we fix $p\in \widehat{{P}}_k^+$ and write $\rho(p)=(a_{\bf n})_{{\bf n}\in \widehat{\mathbb{I}}^{(3)}_k}$ and $L_k(  \rho(p)) = \big( c_{\bf n} \big)_{{\bf n}\in \widehat{\mathbb{I}}_k^{(3)}}$.
From \eqref{eq:def_e3}, the $c_{\bf n}$ can be computed as follows:
\begin{align*} 
c_{n_1,n_2,n_3} &=  \sum_{(m_1,m_2,m_3)\in \widehat{\mathbb{I}}_k^{(3)}} a_{m_1,m_2,m_3} \bigg( \delta\tbinom{m_3}{n_3} b_{n_1,n_2}^{m_1} + \delta\tbinom{m_2}{n_1} b_{n_2,n_3}^{m_1} \bigg) \\
&= \sum_{(m_1,m_2,m_3)\in \widehat{\mathbb{I}}_k^{(3)}} a_{m_1,m_2,m_3} \bigg(- \delta\tbinom{m_1,m_2,m_3}{n_1,n_2,n_3} - \delta\tbinom{m_1}{n_1} b_{n_2,n_3}^{m_2} \bigg),
\end{align*}
where we have used the relations \eqref{eq8_1} and \eqref{eq8_2} for the last equality.
It follows that the above last term coincides with the $(n_1,n_2,n_3)$-th entry of the row vector $-\widehat{B}_k^{(3)}(  \rho(p) )$, i.e. we have $L_k(\rho(p))=-\widehat{B}_k^{(3)}(  \rho(p) )$, and hence $L_k(\rho(\widehat{{P}}_k^+))=\widehat{B}_k^{(3)}(  \rho(\widehat{{P}}_k^+) )$.

To prove $\widehat{B}_k^{(3)}(\rho(\widehat{{P}}_k^+)) \subset i_0 \big( {\rm Im}\, B_k^{(3)} \big)$, we need the following lemma.
For $k$ odd, let
\[ \widehat{\mathbb{I}}_k(\mathbf{o}\mathbf{e}) = \mathbb{I}_k(\mathbf{o}\mathbf{e}) \cup \{(k,0)\}\]
and define an extended matrix $\widehat{B}_k$ of $B_k$ by
\[
\widehat{B}_k=\left(  e\tbinom{\bf m}{\bf n} \right)_{\begin{subarray}{c} {\bf m}\in \widehat{\mathbb{I}}_k(\mathbf{o}\mathbf{e}) \\ {\bf n} \in \widehat{\mathbb{I}}_k(\mathbf{o}\mathbf{e}) \end{subarray}}.
\]

\begin{lemma}\label{8_5}
For each odd integer $k\ge3$, there exists an element $(a_{\bf n})_{{\bf n}\in \widehat{\mathbb{I}}_k(\mathbf{o}\mathbf{e})  } \in \ker \widehat{B}_k$ such that $a_{k,0}\neq 0$.
\end{lemma}
\begin{proof}
Since $\widehat{B}_3=\left(e\tbinom{3,0}{3,0} \right)=(0)$, we have $\ker \widehat{B}_3=\Q$.
For the case $k\ge5$ odd, such element in the statement is obtained from the extended period polynomial corresponding to the Eisenstein series, which was obtained in \cite{Z4}.
For $k\ge4$ even, let
\begin{equation}\label{eq:period_poly_Eis}
\widehat{G}_k (x_1,x_2) := 4\sum_{\substack{n_1+n_2=k\\n_1,n_2\ge0}} \beta_{n_1}\beta_{n_2} x_1^{n_1-1}x_2^{n_2-1} \in \frac{1}{x_1}\Q[x_1,x_2]+\frac{1}{x_2}\Q[x_1,x_2],
\end{equation}
where $\beta_k$ is defined in \eqref{beta} (note that the Laurent polynomial $\widehat{G}_k (x_1,x_2)$ corresponds to \cite[Eq.~(11)]{Z4}).
It follows from \cite[Proposition in p.453]{Z4} that $\widehat{G}_k (x_1,x_2) - \widehat{G}_k (x_1+x_2,x_2) - \widehat{G}_k (x_1+x_2,x_1) =0$. 
Letting $x_1\rightarrow -x_1$ and using $\widehat{G}_k (-x_1,x_2) =\widehat{G}_k (x_1,-x_2)=-\widehat{G}_k (x_1,x_2)  $, we have
\begin{equation}\label{eq:rel_G} 
\widehat{G}_k (x_1,x_2) +\widehat{G}_k (x_2-x_1,x_2) -\widehat{G}_k (x_2-x_1,x_1) =0.
\end{equation}
For $k\ge5$ odd, we set $p(x_1,x_2) = x_1\widehat{G}_{k-1} (x_1,x_2)$ and define rational numbers $a_{n_1,n_2}$'s by
\begin{equation*}
p(x_1,x_2)-p(0,x_2) = \sum_{\substack{n_1+n_2=k\\n_1\ge2\\n_2\ge0}} a_{n_1,n_2} x_1^{n_1-1}x_2^{n_2-1},
\end{equation*}
i.e. $a_{n_1,n_2}=4\beta_{n_1-1}\beta_{n_2}$ if $(n_1,n_2)\in \widehat{\mathbb{I}}_k(\mathbf{o}\mathbf{e})$ and $a_{n_1,n_2}=0$ otherwise.
Since $a_{k,0} \neq 0$, the proof is done by showing that the row vector $(a_{\bf n})_{{\bf n}\in \widehat{\mathbb{I}}_k(\mathbf{oe})}$ lies in $\ker \widehat{B}_k$.

It can be shown that the assertion that a row vector $(b_{\bf n})_{{\bf n} \in \widehat{\mathbb{I}}_k(\mathbf{oe})}$ lies in $\ker \widehat{B}_k$ is equivalent to the statement that the polynomial $q(x_1,x_2)=\sum_{(n_1,n_2)\in \widehat{\mathbb{I}}_k(\mathbf{oe})} b_{n_1,n_2} x_1^{n_1-1}x_2^{n_2-1}$ satisfies
\[ q(x_1,x_2)-q(x_2-x_1,x_2)+q(x_2-x_1,x_1) = \mbox{(odd polynomial in $x_1$)}.\]
Now let $q(x_1,x_2)=p(x_1,x_2)-p(0,x_2)$ and computes
\begin{align*}
&q(x_1,x_2)-q(x_2-x_1,x_2)+q(x_2-x_1,x_1) \\
&= p(x_1,x_2) - p(x_2-x_1,x_2) + p(x_2-x_1,x_1)-p(0,x_1)\\
&= x_2 \widehat{G}_{k-1}(x_1,x_2) - p(0,x_1)= q(x_2,x_1),
\end{align*}
where for the second equality we have used \eqref{eq:rel_G} and the last equality follows from $\widehat{G}_k(x_1,x_2)=\widehat{G}_k(x_2,x_1)$.
Since $q(x_2,-x_1)=\sum_{(n_1,n_2)\in \widehat{\mathbb{I}}_k(\mathbf{oe})} a_{n_1,n_2}x_1^{n_1-1}(-x_2)^{n_2-1}=  -q(x_2,x_1)$, the proof is concluded.
\end{proof}

Let us turn to the proof of the inclusion $\widehat{B}_k^{(3)} \big(\rho\big( \widehat{{P}}_k^+ \big) \big) \subset i_0 \big( {\rm Im}\, B_k^{(3)} \big)$ for $k$ even.
It suffices to show that for any $v\in \rho\big(\widehat{{P}}_k^+\big)$, there exists $v'\in \V^{(3)}_k$ such that 
\[
\widehat{B}_k^{(3)}(v)=i_0\big(B_k^{(3)}(v')\big).
\]
We note that the matrix $\widehat{B}_k^{(3)}$ can be written in terms of a block diagonal matrix with blocks $\widehat{B}_{k-n}$ for $n=3,5,\ldots,k-3$:
\begin{align}\label{eq:decomp_hatBk3}
\widehat{B}_k^{(3)} &= {\rm diag}(\widehat{B}_{k-3},\widehat{B}_{k-5},\ldots,\widehat{B}_3).
\end{align}
For each $\widehat{B}_{k-n}$, Lemma \ref{8_5} shows the existence of a row vector $(b_{\bf n})_{{\bf n} \in \widehat{\mathbb{I}}_{k-n}(\mathbf{o}\mathbf{e})} \in \ker \widehat{B}_{k-n}$ such that $b_{k-n,0}=1$.
By \eqref{eq:decomp_hatBk3} we see that the row vector $(\delta\tbinom{n}{n_1} b_{n_2,n_3})_{(n_1,n_2,n_3) \in \widehat{\mathbb{I}}_{k}(\mathbf{o}\mathbf{o}\mathbf{e})}$ lies in $\ker \widehat{B}_k^{(3)}$.
This says that for each $(m_1,m_2)\in \mathbb{I}_k(\mathbf{o}\mathbf{o})$, there is a row vector $v_{m_1,m_2} \in \ker \widehat{B}_k^{(3)}$ whose $(n_1,n_2,0)$-th entry is $\delta\tbinom{m_1,m_2}{n_1,n_2} $ for all $(n_1,n_2,0)\in \widehat{\mathbb{I}}_k^{(3)}$. 
Then, for any $\rho(p)=(a_{\bf n})_{{\bf n}\in \widehat{\mathbb{I}}_k^{(3)}} \in \rho(\widehat{{P}}_k^+)$, the element
\[  \rho(p) - \sum_{(n_1,n_2)\in   \mathbb{I}_k(\mathbf{o}\mathbf{o})} a_{n_1,n_2,0}\cdot v_{n_1,n_2}\]
lies in $i_0\big(\V_k^{(3)}\big)$.
Therefore, there is an element $v' \in \V_k^{(3)}$ satisfying $\widehat{B}_k^{(3)} (\rho(p) )=\widehat{B}_k^{(3)} (i_0(v' ))$.
Since $e\tbinom{m,k-m}{k,0}=0$ whenever $k-m>0$, it is easily seen that the equality $\widehat{B}_k^{(3)} (i_0(v' ))=i_0 \big( B_k^{(3)} (v') \big)$ holds.
Hence $\widehat{B}_k^{(3)} \big(\rho\big( \widehat{{P}}_k^+ \big) \big)$ belongs to $ i_0 \big( {\rm Im}\, B_k^{(3)} \big)$, so does $L_k \big(\rho\big( \widehat{{P}}_k^+ \big) \big) $.
\end{proof}

We expect that the map $L_k$ in Theorem \ref{8_4} is bijective, because our numerical experiments show that
\[ {\rm Im}\, B_k^{(3)} \cap \ker E_k^{(3)} \stackrel{?}{=} \ker E_k^{(3)}\]
and 
\[ \sum_{k>0} \dim_\Q   \ker E_k^{(3)} x^k  \stackrel{?}{=} \frac{1}{x^2}\even(x) \cusp (x),\]
which have both been verified by using Mathematica up to $k=40$.

Let us give an example of Theorem \ref{8_4}.
One finds that all left annihilators of the matrix
\[
C_{12}^{(3)}=\left(
\begin{array}{cccccc}
 0 & 0 & 0 & 0 & 0 & 4 \\
 0 & 0 & 12 & 0 & 8 & 8 \\
 0 & 0 & 42 & 0 & 70 & 12 \\
 0 & 0 & 12 & 20 & 8 & -12 \\
 60 & 100 & 64 & -20 & 16 & -24 \\
 42 & 70 & 42 & 0 & 0 & -30 \\
\end{array}
\right)
\]
are a rational multiple of the row vector
\[ 
\left(
\begin{array}{cccccc}
 20 & 14 & 20 & -63 & -63 & 90 \\
\end{array}
\right).\]
Theorem \ref{8_4} characterizes this row vector: it is obtained from $L_{12}(\rho(\widehat{P}_{12}^+))$.
We remark that the right annihilator of $C_{12}^{(3)}$ gives a simple relation of the form
\[3 \zeta^\m(3,5,4)-5\zeta^\m(3,3,6) \equiv 0 \mod \mathfrak{D}_2\mathcal{H}_{12}.\]

\subsection{Remark on $C_k^{(1)}$}

There have to be more elements in $\ker C_k^{(1)}$ corresponding to the $x^k$-term in 
$$\frac{1}{x^2}\even(x)\cusp(x)-\even(x)\cusp(x)=\cusp(x),$$
which is nothing but the dimension of $S_k$.
For this, the first author observed a remarkable connection with the ``derivative" of odd period polynomials removing the first and last terms (which should be called the {\it restricted odd period polynomial}).
For example, the row vector $(35,-42,15,0,0,0)$ is a basis of $\ker C_{12}^{(1)}$, and these coefficients are obtained from the derivative of the odd period polynomial \eqref{eq:odd-period-poly-wt12} ignoring the term $4x^9y+4xy^9$.
In general, for $p(x_1,x_2) \in W_k^{-}$, let $a_{n_1,n_2,n_3}$ be the coefficient of $x_1^{n_1-1}x_2^{n_2-1}x_3^{n_3-1}$ in 
\[ \frac{x_3}{x_2} \frac{dp}{dx_1} (x_1,x_2).\]
Then, the row vector $(a_{\bf n})_{{\bf n}\in \mathbb{I}_k^{(3)}}$ seemingly lies in $\ker C_k^{(1)}$.
This has been verified by Mathematica up to $k=40$.

\section{Further remarks}

Denote by $\depth$ the depth filtration  (see \cite[\S4]{B}), which is an increasing filtration on $\Zsp$:
\[ \depth_0\Zsp=\Q\subset \depth_1\Zsp\subset \cdots \subset \depth_r \Zsp \subset  \cdots \subset \Zsp.\] 
We note that the space $\Zsp$ has the structure of a filtered algebra with respect to the depth filtration $\depth$, i.e. 
\[ \depth_r \Zsp \cdot \depth_s \Zsp \subset \depth_{r+s} \Zsp\]
holds for any $r,s\ge0$.

Consider the following graded $\Q$-algebra:
\[ {\rm gr}^\depth \Zsp := \bigoplus_{r\ge1} \depth_r \Zsp \big/ \depth_{r-1}\Zsp. \]
We denote by 
\[\zeta^\m_\depth(n_1,\ldots,n_r)\]
 an image of $\zeta^\m(n_1,\ldots,n_r)$ in the bigraded $\Q$-algebra ${\rm gr}^\depth \Zsp$, called the {\it depth-graded motivic multiple zeta value}.
Note that by definition $\zeta^\m_\depth(2)$ is non-zero.
Now we define the $j$-th almost totally odd motivic multiple zeta value.

\begin{definition}\label{6_1}
For an integer $r\ge j\ge 1$, the depth-graded motivic multiple zeta value $\zeta^\m_\depth(n_1,\ldots,n_r)$ is called the $j$-th almost totally odd motivic multiple zeta value if $n_j\ge2$ is even and other $n_i$'s are odd and greater than 1.
\end{definition}

The reminder of this section is devoted to illustrating some expectation for the almost totally odd motivic multiple zeta values.
As a prototype, we have in mind an analogues story for the study of totally odd motivic multiple zeta values developed by Brown \cite[\S10]{B}.
They are elements $\zeta^\m(2n_1+1,2n_2+1,\ldots,2n_r+1)$'s ($n_i\ge1$) in the space ${\rm gr}^\depth \mathcal{A}$.
In \cite[Conjecture 4]{B}, Brown recast the Broadhurst-Kreimer conjecture \cite{BK} as a statement of the homology of the depth-graded motivic Lie algebra $\mathfrak{d}$ (see \cite[\S2.5]{B2} and \cite[\S4]{B}).
It is believed that the $\Q$-algebra generated by totally odd motivic multiple zeta values is isomorphic to the graded dual of the universal enveloping algebra of the Lie subalgebra $\mathfrak{g}^{\rm odd}$ of $\mathfrak{d}$ generated by canonical generators $\sigma_{2i+1}^{(1)}$ in depth 1.
The generators $\sigma_{2i+1}^{(1)}$ are also believed to be subject only to the quadratic relations obtained from restricted even period polynomials (the Ihara-Takao relation).
These lead to the uneven part of the Broadhurst-Kreimer conjecture \cite[Conjecture 5]{B} stating that the generating series of the dimension of the $\Q$-vector space spanned by all totally odd multiple zeta values of weight $k$ and depth $r$ is given by
\[ \frac{1}{1-\odd(x)y+\cusp(x)y^2} .\]

Now consider the $\Q$-vector space $\Zsp^{al}_{k,r}$ spanned by $\zeta^\m_{\depth}(2n_1)\zeta^\m_{\depth}(2n_2+1,\ldots,2n_{r}+1)$ ($n_i\ge1$) of weight $k$ and depth $r$ for positive integers $k>r>1$.
We let $\Zsp^{al}_{k,1}=\Q\zeta_\depth^\m(k)$ if $k$ is even and $\Zsp^{al}_{k,1}=\{0\}$ if $k$ is odd.
According to the uneven part of the Broadhurst-Kreimer conjecture, it is naturally predicted that 
\begin{align}
\label{eq:conj-dim-al}
\sum_{k,r>0} \dim\Zsp^{al}_{k,r} x^ky^r&\stackrel{?}{=} \frac{\even(x)y}{1-\odd(x)y+\cusp(x)y^2} \\
\notag &=\even(x)y + \even(x)\odd(x) y^2 + \big(\even(x)\odd(x)^2- \even(x)\cusp(x) \big)y^3+\cdots.
\end{align}
Indeed, the equality of the coefficients of $y^r$ in \eqref{eq:conj-dim-al} holds for $r=1,2,3$ (the case $r=3$ is due to Goncharov \cite[Theorem 2.6]{G1}).

Denote by $\Zsp^{al,(j)}_{k,r}$ the $\Q$-vector space spanned by all $j$-th almost totally odd motivic multiple zeta values of weight $k$ and depth $r$.
By the parity theorem we have $ \Zsp^{al,(j)}_{k,r}\subset  \Zsp^{al}_{k,r}$ for $r=1,2,3$ and $1\le j\le r$, which is not known for $r\ge4$.
Moreover, it follows from \cite[Theorem 2]{Z3} (see also \cite{M3}) that the equality $\Zsp^{al,(1)}_{k,2}= \Zsp^{al}_{k,2}$ holds.
According to Conjecture \ref{conj:dim_C^j} and \eqref{eq:conj-dim-al}, it is expected that the equality $ \Zsp^{al,(2)}_{k,3}\stackrel{?}{=} \Zsp^{al}_{k,3}$ holds.
In general, we are expecting the following conjecture.

\begin{conjecture}
For $k>r\ge3$, we have $\Zsp_{k,r}^{al,(r-1)}\stackrel{?}{=}\Zsp_{k,r}^{al}$.
\end{conjecture}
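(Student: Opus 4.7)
The plan is to proceed by induction on the depth $r$, taking as base case $r=3$, which is expected to follow from Conjecture \ref{conj:dim_C^j} combined with Theorems \ref{thm:rest_even_C^j}, \ref{thm:B^3} and \ref{thm:B^3+E^3}. The central mechanism will again be Brown's infinitesimal coactions $D_m$ together with Lemma \ref{lem:key}, applied now in the depth-graded algebra ${\rm gr}^\depth \Zsp$.

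The starting observation is the following: the generator $f_2$ is central in $\mathcal{U}$, and $\phi(\zeta^\m(2k)) \in \Q^\times f_2^k$ by Theorem \ref{thm:phi}. Consequently, if an inductive extension of Proposition \ref{prop:formula_phi} is granted, the top depth-$r$ part of $\phi$ applied to the $(r-1)$-th almost totally odd motivic multiple zeta value $\zeta^\m(2n_1+1,\ldots,2n_{r-2}+1,2n_0,2n_{r-1}+1)$ is a nonzero rational multiple of
\[
f_{2n_1+1}\cdots f_{2n_{r-2}+1}\cdot f_2^{n_0}\cdot f_{2n_{r-1}+1} \;=\; \big(f_{2n_1+1}\cdots f_{2n_{r-1}+1}\big)\otimes f_2^{n_0},
\]
the equality using the centrality of $f_2$; likewise the top depth-$r$ part of $\phi$ applied to the product $\zeta^\m(2n_0)\zeta^\m(2n_1+1,\ldots,2n_{r-1}+1)$ is a nonzero rational multiple of the same word. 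Thus in the depth-graded algebra ${\rm gr}^\depth\mathcal{U}$ the images of $\Zsp^{al}_{k,r}$ and $\Zsp^{al,(r-1)}_{k,r}$ coincide, and the conjecture reduces to proving that the depth-graded $\phi$ is injective on these two subspaces.

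To carry out the induction, I would compute $D_m(\alpha)$ for $m\ge 3$ odd and $\alpha$ ranging over generators of each side, using Proposition \ref{prop:D_m} on single motivic multiple zeta values and the Leibniz rule on products. For an $(r-1)$-th almost totally odd motivic multiple zeta value, the leading summands $\xi_{m_1}\otimes \zeta^\m(m_2,\ldots,m_r)$ in $D_m(\alpha)$ have second factor an $(r-2)$-th almost totally odd motivic multiple zeta value of depth $r-1$; for a product $\zeta^\m(2n_0)\zeta^\m(o_1,\ldots,o_{r-1})$ with odd $o_i\ge 3$, the Leibniz rule produces tensor products whose right-hand factor is a product of $\zeta^\m(2n_0)$ with a totally odd motivic multiple zeta value of depth $r-2$, i.e.~an element of $\Zsp^{al}_{k-m,r-1}$. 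The inductive hypothesis then identifies these right-hand factors, and summing over all odd $m$ and invoking Lemma \ref{lem:key} ii) forces the two depth-$r$ elements to agree modulo $\depth_{r-1}\Zsp_k$, which is exactly the statement needed in ${\rm gr}^\depth \Zsp$.

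The main obstacle is twofold. First, one needs a replacement for Proposition \ref{prop:formula_phi} in depth $r\ge 4$: the crucial hypothesis $\depth_{r-1}\mathcal{L}_m = \Q\xi_m$ that powered the depth-$3$ argument fails already in depth $4$, so lower-depth corrections coming from products of several $\xi_{m_i}$'s must be tracked carefully and cancelled by hand. Second, and more seriously, the apparently weaker inclusion $\Zsp^{al,(j)}_{k,r}\subset \Zsp^{al}_{k,r}$ is itself an instance of the higher-depth parity theorem and is not known for $r\ge 4$; accordingly, both inclusions of the conjectured equality must be established in parallel within the same induction loop, rather than one being deduced from the other.
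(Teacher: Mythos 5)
This statement is presented in the paper as a conjecture, with no proof offered, and your proposal does not close the gap: it is a strategy outline whose two load-bearing steps are themselves open. The base case $r=3$ is not established by Theorems \ref{thm:rest_even_C^j}, \ref{thm:B^3} and \ref{thm:B^3+E^3} alone — those give only lower bounds on $\dim_\Q\ker C_k^{(j)}$ (and Theorem \ref{thm:B^3+E^3} is not even known to be injective); the actual equality $\Zsp^{al,(2)}_{k,3}=\Zsp^{al}_{k,3}$ requires the rank statement of Conjecture \ref{conj:dim_C^j} together with \eqref{eq:conj-dim-al}, both only verified numerically. So your induction starts from an unproven base.

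More seriously, the central computation in your second paragraph is incorrect already in depth $3$. By Proposition \ref{prop:formula_phi}~ii), the leading part of $\phi(\zeta^\m(n_1,n_2,n_3))$ is not a nonzero multiple of the single word $f_{n_1}f_{n_2}f_{n_3}$; it is the full sum $\sum c\tbinom{m_1,m_2,m_3}{n_1,n_2,n_3} f_{m_1}f_{m_2}f_{m_3}$ over all $(m_1,m_2,m_3)$ with $m_1,m_2$ odd and $m_3\ge2$, and there is no reason for the diagonal coefficient to dominate or even be nonzero. Whether the collection of these coefficient vectors spans the same space as the images of the products $\zeta^\m(2n_0)\zeta^\m(odd,\ldots,odd)$ is \emph{exactly} the rank question encoded in the matrices $C_k^{(j)}$ (Theorem \ref{thm:rankC}), i.e.\ the content of the conjecture itself; asserting that the two images "coincide because $f_2$ is central" assumes the conclusion. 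Finally, you correctly identify that for $r\ge4$ one loses $\depth_{r-1}\mathcal{L}_m=\Q\xi_m$ and that the parity inclusion $\Zsp^{al,(j)}_{k,r}\subset\Zsp^{al}_{k,r}$ is unknown, but you offer no mechanism to overcome either obstruction — the induction loop as described does not close. The honest conclusion is that this remains a conjecture; your sketch is a reasonable description of why one might believe it, not a proof.
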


As a final remark, consider for $k$ even
\[ C_k^{(\mathbf{eee})} =  \left( c\tbinom{m_1,m_2,m_3}{n_1,n_2,n_3} \right)_{\begin{subarray}{c} (m_1,m_2,m_3)\in \mathbb{I}_k^{(3)} \\ (n_1,n_2,n_3)\in \mathbb{I}_k(\mathbf{eee}) \end{subarray}},\]
where $\mathbb{I}_k(\mathbf{eee})$ is defined in \eqref{eq2_5}.
The technique we have used can also be applied to the matrix $C_k^{(\mathbf{eee})}$.
In fact, one can show that
\[ \dim_\Q \langle \zeta^\m_\depth (n_1,n_2,n_3) \mid (n_1,n_2,n_3)\in \mathbb{I}_k(\mathbf{eee}) \rangle_\Q = \rank C_k^{(\mathbf{eee})}.\]
Our numerical experiment suggests that
\[ \sum_{k>0:{\rm even}} \rank C_k^{(\mathbf{eee})} x^k \stackrel{?}{=} \even(x)^3- \odd(x)^2-\even(x) \cusp(x) = \even(x)\odd(x)^2-\even(x)\cusp(x).\]
This dimension conjecture implies that the $\Q$-vector space $\Zsp_{k,3}^{al}$ spanned by elements $\zeta_\depth^\m(n_1) \zeta_\depth^\m(n_2,n_3) \ \big( (n_1,n_2,n_3)\in \mathbb{I}_k(\mathbf{eoo}) \big)$ is generated by $\zeta^\m_\depth(n_1,n_2,n_3)$  $ \big( (n_1,n_2,n_3)\in \mathbb{I}_k(\mathbf{eee}) \big)$, which was also pointed out by M. Hirose and N. Sato.

\appendix

\section{Rational realizations of the formal double zeta space}

In order to recall the proof of Proposition \ref{prop:tau_formula} from \cite{GKZ}, we briefly review works on rational realizations of the formal double zeta space.

Throughout this subsection, we assume $k$ to be even.
The formal double zeta space, denoted by $D_k$, is the $\Q$-vector space spanned by formal symbols $Z_{r,s},P_{r,s},Z_k$ \ ($r,s\ge1,r+s=k$) which are subject only to the relations
\begin{equation*}
\begin{aligned}
&Z_{r,s}+ Z_{s,r}+Z_{r+s}=P_{r,s} \qquad (r+s=k),\\
&\sum_{r+s=k} \left( \binom{s-1}{i-1} + \binom{s-1}{j-1} \right) Z_{r,s} =  P_{i,j} \qquad (i+j=k).
\end{aligned}
\end{equation*}
We note that our definition differs from the one in \cite[\S2]{GKZ}, because we use the opposite convention, i.e. their double zeta value $\zeta(r,s)$ equals our $\zeta(s,r)$.
An element in the space ${\rm Hom}_\Q( D_k,\Q)$, which will be our interest, is called the rational realization of the formal double zeta space.
Finding a rational realization is equivalent to solving the double shuffle equation in depth 2 
\begin{equation}\label{eq:ds_eq_dep2}
\begin{aligned}
&\mathfrak{Z}_k(x,y)+\mathfrak{Z}_k(y,x)+z_k \frac{x^{k-1}-y^{k-1}}{x-y} = \mathfrak{P}_k(x,y),\\
&\mathfrak{Z}_k(x,x+y)+\mathfrak{Z}_k(y,x+y) = \mathfrak{P}_k(x,y),
\end{aligned}
\end{equation}
with $\mathfrak{Z}_k(x,y)=\sum_{r+s=k} z_{r,s}x^{r-1}y^{s-1}\in \Q[x,y], \mathfrak{P}_k(x,y)=\sum_{r+s=k} p_{r,s}x^{r-1}y^{s-1}\in \Q[x,y]$ and $z_k\in \Q$.
In fact, the above solution induces an element $\varphi \in D_k^\vee$ given by
\[\varphi(Z_{r,s})=z_{r,s}, \ \varphi(Z_k)=z_k ,\ \varphi(P_{r,s})=p_{r,s}.\]
Solutions to the equations \eqref{eq:ds_eq_dep2} are also developed by Ecalle \cite{Ec} and Brown \cite{B2}, in which $p_{r,s}$ is always treated to be the product $z_rz_s$.

In \cite[Section 6]{GKZ}, an explicit element in ${\rm Hom}_\Q( D_k,\Q)$ is constructed by using extended odd period polynomial. 
Set $\widehat{V}_k=\bigoplus_{\substack{r+s=k\\r,s\ge0}}\Q x^{r-1}y^{s-1}$.
Let $\widehat{W}_k$ be the space of extended period polynomials:
\[ \widehat{W}_k=\{ P \in \widehat{V}_k \mid P\big| (1-T-T')=0\},\]
where $T=(\begin{smallmatrix}1&1\\0&1\end{smallmatrix}), T'=(\begin{smallmatrix} 1&0\\1&1 \end{smallmatrix})$.
The action of the group ${\rm PGL}_2(\Z)$ on the space $\Q(x,y)$ of rational functions is defined in the standard manner by $(P\big|\gamma)(x,y)=P(ax+by,cx+dy)$ for $\gamma=(\begin{smallmatrix}a&b\\c&d \end{smallmatrix})$ and extended to the group ring $\Z[{\rm PGL}_2(\Z)]$ by linearity.
The space $\widehat{W}_k$ splits into even (resp.\ odd) polynomial spaces $\widehat{W}_k^{+}$ (resp.\ $\widehat{W}_k^{-}$).
We remark that $\widehat{W}_k^{+}=W_k^+:=W_k^{+,0}\oplus \Q(x^{k-2}-y^{k-2}) $ the space of even period polynomials (no pole) and each element $P$ in $W_k^{+}$ (resp.\ $\widehat{W}_k^{-}$) has antisymmetric (resp.\ symmetric) property:
\begin{equation}\label{eq:ep_inv}
P\big| \varepsilon = -P \quad (\mbox{resp.}\ P\big| \varepsilon =P ),
\end{equation}
where $\varepsilon=(\begin{smallmatrix} 0&1\\1&0 \end{smallmatrix})$.
Note that $T'=\varepsilon T \varepsilon$.

\begin{proposition}{\cite[Proposition 5 (ii)]{GKZ}}\label{prop:poler_solution}
For $P(x,y)\in \widehat{W}_k^{-}$, let $Q:= \frac{1}{3} P\big|(T^{-1}+1)$. 
Then we have
\[ Q(x,y)+Q(y,x)=P(x,y),\qquad Q(x+y,y)+Q(x+y,x)=P(x,y).\]
\end{proposition}
\begin{proof}
Proof is the same with \cite[Proposition 5 (ii)]{GKZ}.
Note that $3Q(x,y)+3Q(y,x)=P\big|(T^{-1}+1)(1+\varepsilon)$.
Let $\delta=(\begin{smallmatrix} -1&0\\0&1 \end{smallmatrix})$.
It follows that $\delta T^{-1} \varepsilon = T' T^{-1}$.
By \eqref{eq:ep_inv} and $P\big|\delta=-P$, we see that
\[P\big|(T^{-1}+1)(1+\varepsilon)=P\big| (2+T^{-1}-\delta T^{-1} \varepsilon)= P\big| (3+(1-T-T')T^{-1}).\]
Then, the defining relation of $\widehat{W}_k$ shows $P\big|(T^{-1}+1)(1+\varepsilon)=3P$, which proves the first identity.
The second identity follows from $3Q(x+y,y)+3Q(x+y,x)=P\big|(T^{-1}+1)T(1+\varepsilon) =P\big|(2+T+T')=3P$.
\end{proof}

It is shown by Zagier \cite{Z4} that the space $M_k$ of modular forms is canonically isomorphic to $\widehat{W}_k^{-}\otimes \C$.
Thus, Proposition \ref{prop:poler_solution} says that each modular form provides a polar solution to the double shuffle equation \eqref{eq:ds_eq_dep2} with $z_k=0$.
We see that a solution corresponding to a cusp form does not have any pole and the solution obtained from the period polynomial of the Eisenstein series (already appeared in \eqref{eq:period_poly_Eis}!) does have a pole.
Removing poles, one may get a rational realization of the formal double zeta space.
The procedure to remove poles is not unique, but one way is given as follows.

\begin{proposition}{\cite[Supplement to Proposition 5]{GKZ}}\label{prop:supp_prop_5}
Take $\widehat{P}(x,y)=P(x,y) + \lambda(x^{k-1}y^{-1}+y^{k-1}x^{-1}) \in \widehat{W}_k^{-}$ to be $P(x,y)\in \Q[x,y]$.
Then we have a solution to \eqref{eq:ds_eq_dep2} with $\mathfrak{P}_k = P$, $z_k=-2\lambda$ and 
\[\mathfrak{Z}_k = \frac{1}{3} P\big|(T^{-1}+1)\varepsilon +\frac{\lambda}{6} \frac{x^{k-1}-y^{k-1}}{x-y} \bigg| (5- 3U +U\varepsilon)\varepsilon,\]
where $U=(\begin{smallmatrix} 1&-1\\1&0 \end{smallmatrix})$.
\end{proposition}

Proposition \ref{prop:tau_formula} is obtained by taking $\widehat{P}$ in Proposition \ref{prop:supp_prop_5} to be the period polynomial of the Eisenstein series \eqref{eq:period_poly_Eis}.
We now sketch the proof.
\begin{proof}[Proof of Proposition \ref{prop:tau_formula}]
Let 
\[ \lambda = - \frac{\beta_k}{2},\ P(x,y) =  \sum_{\substack{r+s=k\\r,s\ge1}} \beta_{r}\beta_{s} x^{r-1}y^{s-1} .\]
Then we have
\begin{align*}
\frac{1}{3} P\big|(T^{-1}+1)\varepsilon +\frac{\lambda}{6} \frac{x^{k-1}-y^{k-1}}{x-y} \bigg| (5- 3U +U\varepsilon)\varepsilon =\beta_k \sum_{\substack{r+s=k\\r,s\ge1}} \tau(r,s) x^{r-1}y^{s-1},
\end{align*}
where $\tau(r,s)$ is defined in Proposition \ref{prop:tau_formula}.
Since $\widehat{P}(x,y) := P(x,y) +\lambda \left(x^{k-1}y^{-1}+y^{k-1}x^{-1}\right) =   4 \widehat{G}_k(x,y) \in \widehat{W}_k^-$, by Proposition \ref{prop:supp_prop_5} we have
\begin{equation*}
\begin{aligned}
&\beta_k\tau(r,s)+\beta_k\tau(s,r) +\beta_k = \beta_r\beta_s \qquad (r+s=k), \\
& \sum_{r+s=k} \left( \binom{s-1}{i-1}+\binom{s-1}{j-1}\right) \beta_k\tau(r,s)= \beta_i\beta_j \qquad (i+j=k),
\end{aligned}
\end{equation*}
from which Proposition \ref{prop:tau_formula} follows.
\end{proof}

It is worth mentioning that the solution to the equations \eqref{eq:ds_eq_dep2} is not uniquely determined.
For example, Brown \cite[\S7]{B2} gave another solution (it is denoted by $\tau^{(2)}$).
These are different because of solutions to the linearized double shuffle equation in depth 2:
\begin{equation}\label{eq:lin_ds_dep2} 
\mathfrak{L}_k(x,y) + \mathfrak{L}_k(y,x)=\mathfrak{L}_k(x,x+y)+\mathfrak{L}_k(y,x+y)=0
\end{equation}
for $\mathfrak{L}_k(x,y) = \sum_{r+s=k} l_{r,s}x^{r-1}y^{s-1} \in \Q[x,y]$.
In fact, if $(\mathfrak{Z}_k,\mathfrak{P}_k,z_k)$ and $\mathfrak{L}_k$ are solutions to the equations \eqref{eq:ds_eq_dep2} and \eqref{eq:lin_ds_dep2}, respectively, then $(\mathfrak{Z}_k+\mathfrak{L}_k,\mathfrak{P}_k,z_k)$ is a solution to \eqref{eq:ds_eq_dep2}.
It is well-known that the $\Q$-vector space spanned by solutions to \eqref{eq:lin_ds_dep2} is generated by 
\[ \{x^{r-1},y^{s-1}\} = x^{r-1}\big((y-x)^{s-1}-y^{s-1}\big)+(y-x)^{r-1}\big( y^{s-1}-x^{s-1}\big) + y^{r-1}\big(x^{s-1}-(y-x)^{s-1}\big)\]
for $r,s\ge3 $ odd with $r+s=k$, where $\{,\}$ denotes the Ihara bracket (see \cite[Example 6.9]{B}).

We finally make a remark on an analogous result to Proposition \ref{prop:supp_prop_5}, whose proof is omitted.

\begin{proposition}
For $\widehat{P}(x,y)=P(x,y) + \lambda(x^{k-1}y^{-1}+y^{k-1}x^{-1}) \in \widehat{W}_k^{-}$, we have a solution to \eqref{eq:ds_eq_dep2} with $\mathfrak{P}_k = P$, $z_k=-2\lambda$ and 
\[\mathfrak{Z}_k = \frac{1}{3} P\big|(T^{-1}+1)\varepsilon - \lambda \left( \frac{y^{k-1}}{x} - \frac{x^{k-1}-y^{k-1}}{x-y} \right) + \frac{\lambda}{3k} \{ x^{-2},y^k\} \in \Q[x,y].\]
\end{proposition}

\section{Proof of \eqref{eq:dep2_pr2}}

We prove \eqref{eq:dep2_pr2}.
By \eqref{eq:formula_a}, the left-hand side of \eqref{eq:dep2_pr2} is reduced to
\begin{align*}
&\frac{1}{\beta_k} \sum_{\substack{i+j=k\\i,j:{\rm even}}} a_{i,j}(f) \beta_i\beta_j + \sum_{i+j=k} a_{i,j}(f)\\
&=\sum_{\substack{r+s=k\\r,s:{\rm odd}}} (-1)^{\frac{s-1}{2}} L_f^\ast(s) \frac{1}{\beta_k} \sum_{\substack{i+j=k\\i,j:{\rm even}}}\binom{i-1}{s-1} \beta_i\beta_j + \sum_{\substack{r+s=k\\r,s:{\rm odd}}} (-1)^{\frac{s-1}{2}} L_f^\ast(s) \sum_{i+j=k}\binom{i-1}{s-1} .
\end{align*}
Since $\sum_{i+j=k}\binom{i-1}{s-1}=\binom{k-1}{s}$ and $\beta_0=-\frac12$, we have
\begin{align*} 
&=\sum_{\substack{r+s=k\\r,s:{\rm odd}}} (-1)^{\frac{s-1}{2}} L_f^\ast(s)\left( \frac{1}{\beta_k} \sum_{j=2}^k \binom{j-1}{s-1} \beta_j\beta_{k-j}+ \frac{1}{2} \binom{k-1}{s-1}+\binom{k-1}{s} \right).
\end{align*}
Now use \eqref{eq:lambda_odd} to obtain
\begin{align*}
&= 3\sum_{\substack{r+s=k\\r,s:{\rm odd}}} (-1)^{\frac{s-1}{2}} \lambda(r,s) L_f^\ast(s)\\
&+\sum_{\substack{r+s=k\\r,s:{\rm odd}}} (-1)^{\frac{s-1}{2}} L_f^\ast(s)\left( \frac{1}{4} + \frac{1}{4} \binom{k-1}{s-1} - \frac14 \binom{k-1}{s}+\binom{k-1}{s}+\frac{1}{2} \binom{k-1}{s-1} \right) \\
&= 3\sum_{\substack{r+s=k\\r,s:{\rm odd}}} (-1)^{\frac{s-1}{2}} \lambda(r,s) L_f^\ast(s) + \frac14 \sum_{\substack{r+s=k\\r,s:{\rm odd}}} (-1)^{\frac{s-1}{2}} L_f^\ast(s) \left( 1+ 3\binom{k-1}{s-1} + 3\binom{k-1}{s}\right).
\end{align*}
Since $L_f^\ast(s)=(-1)^{\frac{k}{2}}L_f^\ast(k-s)$, we have 
\[ \sum_{\substack{r+s=k\\r,s:{\rm odd}}} (-1)^{\frac{s-1}{2}} L_f^\ast(s)=0 \ \mbox{and} \ \sum_{\substack{r+s=k\\r,s:{\rm odd}}} (-1)^{\frac{s-1}{2}} L_f^\ast(s) \left( \binom{k-1}{s-1} + \binom{k-1}{s}\right) =0.\]
As a result, we obtain
\[\frac{1}{\beta_k} \sum_{\substack{i+j=k\\i,j:{\rm even}}} a_{i,j}(f) \beta_i\beta_j + \sum_{i+j=k} a_{i,j}(f) = 3\sum_{\substack{r+s=k\\r,s:{\rm odd}}} (-1)^{\frac{s-1}{2}} \lambda(r,s) L_f^\ast(s),\]
and hence, the identity \eqref{eq:dep2_pr2} follows from \eqref{eq:KZ_relation}.


\end{document}